\newcommand{\Z}{\mathbb Z}
\newcommand{\Q}{\mathbb Q}
\newcommand{\R}{\mathbb R}
\newcommand{\C}{\mathbb C}
\renewcommand{\P}{\mathbb P}
\newcommand{\bD}{\mathbb D}
\newcommand{\bE}{\mathbb E}
\newcommand{\bF}{\mathbb F}
\newcommand{\bG}{\mathbb G}
\newcommand{\bH}{\mathbb H}
\newcommand{\bK}{\mathbb K}
\newcommand{\bL}{\mathbb L}
\newcommand{\bM}{\mathbb M}
\newcommand{\bS}{\mathbb S}
\newcommand{\bT}{\mathbb T}
\newcommand{\bV}{\mathbb V}
\newcommand\kk{{\Bbbk}}
\newcommand{\cA}{\mathcal A}
\newcommand{\cC}{\mathcal C}
\newcommand{\cE}{\mathcal E}
\newcommand{\cH}{\mathcal H}
\newcommand{\cJ}{\mathcal J}
\newcommand{\cK}{\mathcal K}
\newcommand{\cM}{\mathcal M}
\newcommand{\cO}{\mathcal O}
\newcommand{\cT}{\mathcal T}
\newcommand{\cV}{\mathcal V}
\newcommand{\cZ}{\mathcal Z}
\newcommand{\sA}{\mathscr A}
\newcommand{\sB}{\mathscr B}
\newcommand{\sE}{\mathscr E}
\newcommand{\sF}{\mathscr F}
\newcommand{\sW}{\mathscr W}
\newcommand{\fd}{\mathfrak{d}}
\newcommand{\fa}{\mathfrak{a}}
\newcommand{\g}{\mathfrak{g}}
\newcommand{\h}{\mathfrak{h}}
\newcommand{\p}{\mathfrak{p}}
\renewcommand{\sp}{\mathfrak{sp}}
\newcommand{\G}{\Gamma}
\newcommand{\ba}{\mathbf{a}}
\newcommand{\bb}{\mathbf{b}}
\newcommand{\ee}{\mathbf{e}}
\newcommand{\bv}{\mathbf{v}}
\newcommand{\deltabar}{\overline{\delta}}
\newcommand{\deltatilde}{\widetilde{\delta}}
\newcommand{\nubar}{\overline{\nu}}
\newcommand{\nutilde}{\tilde{\nu}}
\newcommand{\nuhat}{\widehat{\nu}}
\newcommand{\Cbar}{\overline{C}}
\newcommand{\cMbar}{\overline{\cM}}
\newcommand{\Qbar}{\overline{\Q}}
\newcommand{\Sbar}{{\overline{S}}}
\newcommand{\Jtilde}{\widehat{J}}	
\newcommand{\vv}{{\vec v}}
\newcommand{\del}{\partial}
\newcommand{\nablabar}{\overline{\nabla}}
\newcommand{\Sp}{\mathrm{Sp}}
\newcommand{\SL}{\mathrm{SL}}
\newcommand{\GL}{\mathrm{GL}}
\newcommand{\PSL}{\mathrm{PSL}}
\newcommand{\Gm}{\mathrm{\mathbb{G}_m}}
\newcommand{\MHS}{\mathsf{MHS}}
\newcommand{\sing}{\mathrm{sing}}
\newcommand{\nilp}{\mathrm{nil}}
\newcommand{\red}{\mathrm{red}}
\newcommand{\hyp}{\mathrm{hyp}}
\newcommand{\vol}{\mathrm{vol}}
\newcommand{\tor}{\mathrm{tor}}
\newcommand{\ac}{I}		
\newcommand{\basis}{\ba_1,\dots,\ba_g,\bb_1,\dots,\bb_g}
\newcommand{\bdot}{\bullet}
\newcommand{\bs}{\backslash}
\DeclareFontFamily{U}{MnSymbolC}{}
\DeclareSymbolFont{MnSyC}{U}{MnSymbolC}{m}{n}
\DeclareFontShape{U}{MnSymbolC}{m}{n}{
    <-6>  MnSymbolC5
   <6-7>  MnSymbolC6
   <7-8>  MnSymbolC7
   <8-9>  MnSymbolC8
   <9-10> MnSymbolC9
  <10-12> MnSymbolC10
  <12->   MnSymbolC12}{}
\DeclareMathSymbol{\intprod}{\mathbin}{MnSyC}{'270}
\newcommand\im{\operatorname{im}}               
\newcommand\id{\operatorname{id}}
\newcommand\rank{\operatorname{rk}}
\newcommand\Hom{\operatorname{Hom}}
\newcommand\End{\operatorname{End}}
\newcommand\Ext{\operatorname{Ext}}
\newcommand\Aut{\operatorname{Aut}}
\newcommand\Jac{\operatorname{Jac}}
\newcommand\Gr{\operatorname{Gr}}
\newcommand\Res{\operatorname{Res}}
\newcommand\diag{\operatorname{diag}}
\renewcommand\div{\operatorname{div}}
\renewcommand\Im{\operatorname{Im}}
\newtheorem{theorem}{Theorem}[section]
\newtheorem{lemma}[theorem]{Lemma}
\newtheorem{proposition}[theorem]{Proposition}
\newtheorem{corollary}[theorem]{Corollary}
\newtheorem{bigtheorem}{Theorem}
\newtheorem{bigcorollary}[bigtheorem]{Corollary}
\theoremstyle{definition}
\newtheorem{definition}[theorem]{Definition}
\theoremstyle{remark}
\newtheorem{remark}[theorem]{Remark}
\begin{document}

\title[The Rank of the Normal Function of the Ceresa Cycle]
{The Rank of the Normal Functions of the Ceresa and Gross--Schoen Cycles}

\author{Richard Hain}
\address{Department of Mathematics\\ Duke University\\ Durham, NC 27708-0320}
\email{hain@math.duke.edu}

\thanks{ORCID iD: {\sf 0000-0002-7009-6971}}

\date{\today}

\subjclass{Primary 14C30; Secondary 14H15, 14H40, 11G50}

\keywords{Ceresa cycle, Gross--Schoen cycle, normal function, moduli space of curves, variation of mixed Hodge structure, archimedean height function, Teichm\"uller modular form}

\maketitle

\tableofcontents

\section{Introduction}

Suppose that $C$ is a smooth complex projective curve of genus $g\ge 2$. For each point $x$ of $C$, the Abel--Jacobi mapping
\begin{equation}
\label{eqn:abel-jacobi}
\mu_x : C \to \Jac C,\quad p \mapsto [p]-[x]
\end{equation}
embeds $C$ into its jacobian. Its image is an algebraic 1-cycle in $\Jac C$ that we denote by $C_x$. Its image under the involution
$$
\iota : \Jac C \to \Jac C,\quad u\mapsto -u
$$
is another algebraic 1-cycle, $\iota_\ast C_x$, that we denote by $C_x^-$. The {\em Ceresa cycle} associated to $(C,x)$ is the algebraic 1-cycle
$$
Z_{C,x} := [C_x] - [C_x^-]
$$
in $\Jac C$. Ceresa \cite{ceresa} proved that when $C$ is a general curve of genus $g\ge 3$, the Ceresa cycle is not algebraically equivalent to 0.

A standard approach to studying the Ceresa cycle is to let $C$ vary in the moduli space $\cM_g$ of smooth projective curves of genus $g$ and to study $\nu$, the associated {\em normal function}
$$
\begin{tikzcd}
J(\Lambda^3_0 \bH) \ar[r] & \cM_g \ar[l,bend right,"\nu"'].
\end{tikzcd}
$$
Here $\bH$ denotes the variation of Hodge structure over $\cM_g$ whose fiber over the moduli point of $C$ is $H_1(C;\Z)$ and $J(\Lambda^3_0 \bH)$ denotes the family of intermediate jacobians of the primitive degree three homology of $\Jac C$:
$$
PH_3(\Jac C) := H_3(\Jac C)/\big([C]\times H_1(\Jac C)\big),
$$
where $\times$ denotes the Pontryagin product.

The {\em rank} of a normal function is a measure of its non-triviality. It is defined in detail in Section~\ref{sec:rank} and is closely related to Griffiths' infinitesimal invariant of a normal function. Briefly, if one ignores the complex structure, every family of intermediate jacobians is naturally a locally constant family of real tori. This means that, locally, a normal function can be regarded as a smooth function to a real torus. Its derivative is $\C$-linear. The rank of $\nu$ is the value of the complex rank of this function at a general curve. When $g=2$, $\nu$ is identically 0 and thus has rank 0.

\begin{bigtheorem}
\label{thm:max-rk}
For all $g\ge 3$, the normal function $\nu$ of the Ceresa cycle has the maximum possible rank, namely $\dim \cM_g$.
\end{bigtheorem}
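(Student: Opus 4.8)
The plan is to realize the rank of $\nu$ as the generic rank of a semipositive curvature $(1,1)$-form on $\cM_g$ and to force that rank to be maximal by showing that the line bundle underlying the associated biextension is big on $\cMbar_g$. First I would pass to the infinitesimal picture. Near a point $[C]$ of $\cM_g$, trivialize the underlying local system of $\Lambda^3_0\bH$ and view $\nu$ as a smooth map to a real torus; by the horizontality of normal functions together with Griffiths transversality — here $\Lambda^3_0\bH$ has weight $-3$ — the $\C$-linear part of the derivative of $\nu$ at $[C]$ is a linear map
\[
\nablabar\nu\colon T_{[C]}\cM_g \longrightarrow \Gr_F^{-2}\big(\Lambda^3_0 H_1(C)\otimes\C\big),
\]
whose target is canonically a quotient of $\Lambda^2 H^1(C,\cO_C)\otimes H^0(C,\Omega_C)$, and by definition the rank of $\nu$ is the generic value of $\rank\nablabar\nu$. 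The polarization on $\Lambda^3_0\bH$ supplies a canonical $(1,1)$-form on the total space of $J(\Lambda^3_0\bH)$ that is positive along the fibres; its pullback $\omega_\nu:=\nu^\ast(\,\cdot\,)$ is a smooth semipositive $(1,1)$-form on $\cM_g$ whose rank at $[C]$ equals $\rank\nablabar\nu$. Since the rank of a smooth semipositive form is lower semicontinuous, $\rank\nu=\dim\cM_g=3g-3$ holds if and only if $\omega_\nu$ is nondegenerate at some (hence at a general) point of $\cM_g$.

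Next I would bring in the archimedean height. Combining $\nu$ with the polarization yields a biextension variation of mixed Hodge structure over $\cM_g$; its underlying metrized line bundle $\cB$ has a canonical section whose negative log-norm is the archimedean height $\lambda_\nu\colon\cM_g\to\R$ of the Ceresa cycle, a plurisubharmonic function with $dd^c\lambda_\nu=\omega_\nu$. The crucial step is to compute the asymptotics of $\lambda_\nu$ along the boundary of $\cMbar_g$: running the nilpotent- and $\SL_2$-orbit theorems for the biextension variation along each boundary divisor $\delta_i$ — equivalently, reading the limit mixed Hodge structure and the singularity of $\nu$ off the degeneration of the curve to a nodal curve — one shows that a power $e^{-N\lambda_\nu}$ is, up to a nonzero constant, the norm of a Teichm\"uller modular form, i.e. of a section of $\lambda^{\otimes a}\otimes\cO\big(\sum_i b_i\,\delta_i\big)$ whose divisor is supported on the boundary. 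Hence $\cB$ extends to $\cMbar_g$ with $\overline{\cB}\equiv a\lambda+\sum_i b_i\,\delta_i$ in $\Pic_\Q(\cMbar_g)$, with $a\ne 0$ because $\nu$ is not identically zero (Ceresa), and $\lambda_\nu$ extends to a closed positive current representing $\overline{\cB}$ with well-defined Monge--Amp\`ere powers.

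Finally I would conclude. The Hodge class $\lambda$ is big on $\cMbar_g$ for all $g\ge 2$: it is the pullback of an ample class on the Satake--Baily--Borel compactification $\cA_g^{\ast}$ under the Torelli morphism, which is birational onto its image. Granting the boundary coefficients of the previous step, $\overline{\cB}=a\lambda+\sum_i b_i\,\delta_i$ is then still big, so $\overline{\cB}^{\,3g-3}>0$. If $\omega_\nu$ were degenerate at every point of $\cM_g$, its $(3g-3)$-fold Monge--Amp\`ere power would be a closed positive current on $\cMbar_g$ supported on the boundary $\cMbar_g\setminus\cM_g$, which has real dimension $3g-4$, hence would vanish; this would force $\overline{\cB}^{\,3g-3}=0$, a contradiction. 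Therefore $\omega_\nu$ is nondegenerate at a general point, i.e. $\nu$ has rank $3g-3=\dim\cM_g$; the statement for the Gross--Schoen cycle then follows from the known comparison of its normal function with $\nu$.

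The step I expect to be the main obstacle is the boundary analysis: determining the precise asymptotics of $\lambda_\nu$ along each boundary stratum — which requires an explicit description of the limit mixed Hodge structure of the biextension, i.e. of how the Ceresa cycle and its height degenerate as the curve acquires a node — and hence the exact weight $a$ and boundary coefficients $b_i$; this feeds into a bigness check amounting to a slope-type inequality guaranteeing that the boundary part does not overwhelm the positive multiple $a\lambda$. A secondary technical point is verifying that $\lambda_\nu$ extends to a closed positive current of the stated class with Monge--Amp\`ere powers behaving as needed. (Alternatively one could try to compute $\nablabar\nu$ at a general curve directly and establish injectivity by representation theory of $\Sp_{2g}$ or by a Collino--Pirola-style degeneration, but the height-theoretic route seems more robust.)
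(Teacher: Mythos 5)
Your strategy founders at the final step, and the difficulty you defer as ``secondary'' is in fact the heart of the matter. The $(3g-3)$-fold Monge--Amp\`ere power of a closed positive $(1,1)$-current on $\cMbar_g$ is a current of bidimension $(0,0)$, i.e.\ a positive measure, and a positive measure supported on the boundary $\cMbar_g\setminus\cM_g$ (which has real codimension $2$, not dimension $3g-4$) need not vanish --- the support theorem for positive currents gives nothing in top degree. Worse, with your own normalization the potential $\lambda_\nu$ has logarithmic singularities along the $\delta_i$, so the extended current is of the form $dd^c(\text{bounded part})+\sum_i b_i[\delta_i]$ and its top power genuinely charges the boundary (terms such as $[\delta_i]\wedge T^{3g-4}$ are positive measures on $\delta_i$). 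Hence ``$\omega_\nu$ degenerate everywhere on $\cM_g$'' is perfectly compatible with $\overline{\cB}^{\,3g-3}>0$, and no contradiction is obtained from bigness. To rescue the argument you would have to prove that the \emph{interior} Monge--Amp\`ere mass $\int_{\cM_g}\omega_\nu^{3g-3}$ is positive, i.e.\ identify it with an explicit intersection number after subtracting all boundary contributions. That requires fine control of the singularities of the biextension metric not just along the open strata of the $\delta_i$ but at their intersections, where the metric is known \emph{not} to be good (the height-jump phenomenon), so the analytic input you would need is substantially harder than the asymptotic expansion you sketch, and it is not available off the shelf. (Two smaller points: the rank of $\omega_\nu$ equals $\rank\nablabar\nu$ only once one has the curvature identity $dd^c\lambda_\nu=\omega_\nu$ with the Hodge-metric normalization, which should be justified rather than asserted; and bigness of $a\lambda+\sum_i b_i\delta_i$ is sensitive to the signs of the $b_i$, so the ``slope-type inequality'' is a genuine computation, not a formality.)

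For comparison, the paper avoids global positivity altogether and argues by induction on $g$: the base case $g=3$ is settled pointwise at the Klein quartic using the decomposition $\nablabar\nu=\deltabar(\nu)+\nablabar f$, symmetry, and the Collino--Pirola theorem, and the inductive step analyzes the nilpotent orbit of $\nu$ along the smooth locus $\Delta$ of $\Delta_0$: a monodromy computation identifies the residual normal function $\nu_\Delta$ as $\nu_0+c\kappa$ with $c\neq 0$ (giving rank $\ge 3g-4$ by induction), and the non-splitting of the associated real biextension forces the normal rank along $\Delta$ to be $1$, yielding $3g-3$. If you want a route through heights, note that even Gao--Zhang do not argue via curvature integrals but via Ax--Schanuel, precisely because the degeneracy locus of $\omega_\nu$ is hard to control by pluripotential methods.
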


For each pointed curve $(C,x)$ there is also the Gross--Schoen cycle \cite{gross-schoen}. It is a homologically trivial algebraic 1-cycle in $C^3$. Its normal function is 3 times the normal function of the Ceresa cycle. Ziyang Gao and Shou-Wu Zhang \cite{gao-zhang} have proved a stronger version of Theorem~\ref{thm:max-rk} which they use to prove that, for all $g\ge 3$, there is a non-empty Zariski open subset of $\cM_g/\Q$ on which the Bloch--Beilinson height of the Gross--Schoen cycle has the Northcott property. Our proof of the theorem is very different from theirs. Whereas they use Ax--Schanuel, we proceed by studying the behaviour of the Ceresa normal function near the boundary of $\cM_g$ using its associated monodromy representation.

Theorem~\ref{thm:max-rk} implies that there is a proper real analytic subvariety $\Sigma$ of $\cM_g$ such that $\nu$ has maximal rank on the complement $U$ of $\Sigma$. As Gao and Zhang observe via a spreading argument \cite[Prop.~C.7]{gao-zhang}, Theorem~\ref{thm:max-rk} implies that if $C$ is a smooth projective curve whose moduli point lies in $U\setminus \cM_g(\Qbar)$, then the Ceresa and Gross--Schoen cycles associated to $C$ both have infinite order mod rational equivalence. This is a weaker version of their \cite[Thm.~1.3]{gao-zhang}.

The theorem is proved by induction on the genus. The base case $g=3$ is proved in Section~\ref{sec:genus3proof} using a result \cite{collino-pirola} of Collino and Pirola. (See Theorem~\ref{thm:collino-pirola}.) The result is not as explicit as we had hoped. However, as a consolation prize, we obtain an explicit formula for the Green--Griffiths invariant of the genus 3 Ceresa cycle as a Teichm\"uller modular form, as well as a new proof of the theorem of Collino and Pirola. The inductive step is proved by studying the behaviour of the Ceresa normal function in a neighbourhood of the boundary divisor $\Delta_0$ in the Deligne--Mumford compactification $\cMbar_g$ of $\cM_g$ by computing the monodromy representation of the restriction of the Ceresa normal function to this neighbourhood.

\begin{bigtheorem}
\label{thm:gg-genus3}
In genus 3, the Green--Griffiths invariant of the Ceresa cycle is a non-zero multiple of the Teichm\"uller modular form $\chi_{4,0,-1}$. Its restriction to the hyperelliptic locus is a non-zero multiple of the restriction of the Siegel modular form $\chi_{4,0,8}$ to the hyperelliptic locus.
\end{bigtheorem}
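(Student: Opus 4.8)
The plan is to evaluate the Green--Griffiths invariant $\delta\nu$ of the genus-$3$ Ceresa normal function directly, exploiting that in genus $3$ every Hodge bundle attached to $\Lambda^3_0\bH$ is a small Schur functor of the rank-$3$ Hodge bundle $\bE=\pi_*\omega_{C/\cM_3}$ and its determinant $\lambda=\det\bE$, so that the Koszul complex computing $\delta\nu$ can be written down, its cohomology identified, and the answer matched with the named modular forms. First I would make everything explicit. Since $H^1(C,\cO_C)\cong\bE^\vee$ one has $H^3(\Jac C)=\Lambda^3 H^1(C)$; dividing by cup product with the polarization class (that is, passing to $PH^3(\Jac C)=\Lambda^3_0 H^1(C)$) and using the rank-$3$ identities $\Lambda^3\bE=\lambda$, $\Lambda^2\bE\cong\bE^\vee\otimes\lambda$, the graded quotients of the Hodge filtration come out as
\[
\Gr^3_F=\lambda,\qquad \Gr^2_F=\Sym^2\bE^\vee\otimes\lambda,\qquad \Gr^1_F=\Sym^2\bE\otimes\lambda^{-1},\qquad \Gr^0_F=\lambda^{-1}.
\]
Kodaira--Spencer identifies $\Omega^1_{\cM_3}$ with $\pi_*\omega^{\otimes 2}_{C/\cM_3}$; away from the hyperelliptic locus $\cH_3$ the multiplication $\Sym^2\bE\to\pi_*\omega^{\otimes2}$ is an isomorphism of rank-$6$ bundles, so there $\Omega^1_{\cM_3}\cong\Sym^2\bE$, and the symbol of the Gauss--Manin connection is cup product with the Kodaira--Spencer class. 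By the description recalled in Section~\ref{sec:rank}, $\delta\nu$ lies in the first cohomology of the complex $\cK^\bullet$ with $\cK^i=F^{2-i}PH^3(\Jac C)\otimes\Omega^i_{\cM_3}$ and differential $\nabla$.

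Next I would compute that cohomology. By horizontality $\delta\nu$ is detected by a section of $\Gr^1_F\otimes\Omega^1_{\cM_3}=\Sym^2\bE\otimes\Sym^2\bE\otimes\lambda^{-1}$, and three facts pin down the subquotient it lives in: the $\GL_3$-decomposition $\Sym^2\bE\otimes\Sym^2\bE=\Sym^4\bE\oplus\Lambda^2(\Sym^2\bE)\oplus\Sym^2(\Lambda^2\bE)$; that the symbol $\Gr^2_F\to\Gr^1_F\otimes\Omega^1$ of $\nabla$ is injective with image the summand $\Sym^2(\Lambda^2\bE)\otimes\lambda^{-1}$ (a short index computation, which also re-checks that the symbol factors through the primitive quotient --- note $\Sym^2\bE^\vee\otimes\lambda\cong\Sym^2(\Lambda^2\bE)\otimes\lambda^{-1}$); and that the next symbol $\Gr^1_F\otimes\Omega^1\to\Gr^0_F\otimes\Omega^2$ is, up to a scalar, antisymmetrization, with kernel the symmetric part $(\Sym^4\bE\oplus\Sym^2(\Lambda^2\bE))\otimes\lambda^{-1}$. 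So on $\cM_3\setminus\cH_3$ the relevant cohomology is $\Sym^4\bE\otimes\lambda^{-1}$ (the non-split Hodge filtration contributes nothing more), and $\delta\nu$ is a Teichm\"uller modular form of weight $(4,0,-1)$. Feeding a formula for the first derivative of the Ceresa normal function --- extracted from its variation-of-mixed-Hodge-structure description --- through the resulting cocycle then identifies $\delta\nu$, up to a constant, with the suitably normalised defining equation of the canonical model (a plane quartic for a non-hyperelliptic curve), which is exactly $\chi_{4,0,-1}$; tracking the behaviour along $\cH_3$ and the boundary of $\cMbar_3$ (via the degeneration analysis of Theorem~\ref{thm:max-rk}) makes this a global statement. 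It remains only to see the constant is non-zero.

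That non-vanishing comes out of the hyperelliptic restriction, which also yields the second assertion. On $\cH_3$ a curve is $y^2=f(x)$ with $f$ a binary octic with distinct roots; $\bE$ is, up to a line-bundle twist, $\Sym^2 U$ for $U$ the rank-$2$ bundle with $\P(U)$ the hyperelliptic $\P^1$, the canonical image is a conic, and $\Sym^2\bE\to\pi_*\omega^{\otimes2}$ now has one-dimensional kernel and cokernel. Rerunning the computation above with the explicit periods of $dx/y,\,x\,dx/y,\,x^2dx/y$ shows that the restriction of $\delta\nu$ to $\cH_3$ is a non-zero multiple of $f$, lying in the $\Sym^8U$-summand of $\Sym^4(\Sym^2 U)\subset\Sym^4\bE|_{\cH_3}$; in particular the constant above is non-zero, which gives a new proof of the theorem of Collino and Pirola (Theorem~\ref{thm:collino-pirola}). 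Finally, by the classical description of the restriction of the Siegel modular form $\chi_{4,0,8}$ to the hyperelliptic locus, that restriction is also a non-zero multiple of the branch octic --- after the twist by the power of $\lambda$ accounting for the difference between the weight-$(4,0,-1)$ bundle on $\cM_3$ and the weight-$(4,0,8)$ bundle on $\cA_3$, i.e. for the ramification of the Torelli map along $\cH_3$ --- so the two restrictions agree up to a non-zero scalar.

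The identification of the ambient bundle is routine representation theory; the main obstacle is the explicit evaluation, above all on the hyperelliptic locus --- producing a usable formula for the derivative of the Ceresa normal function, carrying it through the hyperelliptic degeneration of the Koszul complex with the correct $\lambda$-normalisation, and reconciling the result with the known expression for $\chi_{4,0,8}$ there. Controlling the behaviour of $\delta\nu$ along $\cH_3$ and the boundary of $\cMbar_3$, so that it is identified with the prescribed Teichm\"uller modular form rather than merely a twist of it, is the other delicate point, and it is supplied by the degeneration analysis built for Theorem~\ref{thm:max-rk}.
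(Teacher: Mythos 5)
Your representation-theoretic bookkeeping (the graded quotients, the identification of cocycles and coboundaries, and $H^1(\Gr_F^0)\cong S^4\sB\otimes\det\sA$, i.e.\ weight $(4,0,-1)$) agrees with Lemma~\ref{lem:rep} and Proposition~\ref{prop:coho}, but the two steps that would actually prove the theorem are asserted rather than argued, and they are exactly the hard points. ``Feeding a formula for the first derivative of the Ceresa normal function through the resulting cocycle'' to identify $\deltabar(\nu)$ with the normalised canonical quartic is not a routine evaluation: it is precisely the Collino--Pirola theorem (Theorem~\ref{thm:collino-pirola}), and the obstruction to any such explicit formula is that the canonical derivative $\nablabar\nu$ is real analytic but not holomorphic --- only its class modulo coboundaries is --- so one cannot just read off its $S^4\sB$-component pointwise from the variation. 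The paper avoids this entirely: admissibility gives that $\deltabar(\nu)$ extends logarithmically to the boundary (Corollary~\ref{cor:gg-log}), the canonical representative $\deltatilde(\nu)$ is anti-invariant under the involution of the double cover $V\to U$, hence $\chi_9\deltatilde(\nu)$ is a holomorphic, involution-invariant section vanishing on the boundary divisor, i.e.\ a Siegel \emph{cusp} form of weight $(4,0,8)$; Ta\"ibi's dimension count \cite{taibi} (that space is one-dimensional, spanned by $\chi_{4,0,8}$) pins it down, and non-vanishing comes from the non-triviality of the Ceresa class (twice the Johnson homomorphism) together with Proposition~\ref{prop:green} and Proposition~\ref{prop:coho}. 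Your outline contains neither the explicit evaluation nor any rigidity argument of this kind, so the identification with $\chi_{4,0,-1}$ (and Collino--Pirola, which you also claim as an output) is unsupported; even granting the pointwise statement you would still need the proportionality factor to be constant in moduli and \cite[Prop.~10.1]{cfg} to name the universal quartic as $\chi_{4,0,-1}$.

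The hyperelliptic assertion likewise cannot be obtained by ``rerunning the computation with the explicit periods of $dx/y$, $x\,dx/y$, $x^2\,dx/y$'': the Ceresa normal function vanishes identically on the hyperelliptic locus and $\chi_{4,0,-1}$ has a simple pole there, so the ``restriction'' in the statement only makes sense through the sheaf $(S^4\sB_V\otimes\det\sA_V)^\sim$ and the exact sequence of Lemma~\ref{lem:technical}(\ref{item:ses}); the paper's identification rests on the divisors of $\chi_9$ and $\chi_{18}$, the ramification of $V\to U$, and the conormal-bundle computation $\deltatilde(\nu)|_{V^\hyp}=h\,d\chi_{18}/\chi_9=2h\,d\chi_9$, none of which your $\lambda$-twist remark replaces. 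The appeal to the ``classical description'' of $\chi_{4,0,8}$ on the hyperelliptic locus \cite{vdg-kouvidakis} is what the paper uses for Corollary~\ref{cor:harris}, not for Theorem~\ref{thm:gg-genus3}, and the explicit hyperelliptic evaluation of the Green--Griffiths invariant you would need in its place is a substantial computation of Harris harmonic-volume type \cite{harris} that the proposal does not perform. So while your route (direct evaluation, then matching with the named forms) is genuinely different from the paper's cusp-form rigidity argument, as written it has gaps at both of its load-bearing steps.
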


For a more precise statement, see Theorem~\ref{thm:gg-genus3-final}. It would be interesting to compute the multiple, which one might expect to lie in $\Q^\times$.

Since the restriction of $\chi_{4,0,8}$ to the hyperelliptic locus has no zeros \cite[Prop.~6.5]{vdg-kouvidakis}, we obtain the following strengthening of the genus 3 case of a result \cite[Thm.~6.5]{harris} of Bruno Harris.

\begin{bigcorollary}
\label{cor:harris}
In genus 3, the rank of the Ceresa normal function is exactly 1 at every point of the hyperelliptic locus.
\end{bigcorollary}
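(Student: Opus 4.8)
The plan is to sandwich the rank between $1$ and $1$: an a priori upper bound of $1$ coming from the hyperelliptic involution, and a lower bound of $1$ coming from Theorem~\ref{thm:gg-genus3} together with the fact that $\chi_{4,0,8}$ does not vanish on the hyperelliptic locus. For the upper bound I would first observe that $\nu$ vanishes identically on the hyperelliptic locus $\cH_3\subset\cM_3$: if $C$ is hyperelliptic and $x\in C$ is a Weierstrass point, then $p+\sigma(p)\sim 2x$ in $\Pic C$ for the hyperelliptic involution $\sigma$, so $\mu_x\circ\sigma=\iota\circ\mu_x$, whence $C_x^- = \iota_\ast(\mu_x)_\ast[C] = (\mu_x)_\ast\sigma_\ast[C] = C_x$ as cycles and $Z_{C,x}=0$. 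Since the class $\nu([C])\in J(PH_3(\Jac C))$ is the Abel--Jacobi image of $Z_{C,x}$ and is independent of the base point $x$, we get $\nu([C])=0$. Consequently the ($\C$-linear) derivative of $\nu$ at a hyperelliptic point kills the tangent space to $\cH_3$, so it factors through the normal bundle of $\cH_3$ in $\cM_3$; as $\dim\cM_3 = 6$ while $\dim\cH_3 = 2\cdot 3-1 = 5$, this normal bundle is a line bundle, so the rank of $\nu$ is at most $1$ at every point of $\cH_3$.

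For the lower bound I would invoke Theorem~\ref{thm:gg-genus3}: the restriction to $\cH_3$ of the Green--Griffiths invariant of $\nu$ is a non-zero scalar multiple of the restriction of the Siegel modular form $\chi_{4,0,8}$, and by \cite[Prop.~6.5]{vdg-kouvidakis} the latter has no zeros on the hyperelliptic locus. Hence the Green--Griffiths invariant of $\nu$ is non-zero at every point of $\cH_3$. Since this invariant is built from the derivative of $\nu$ (as set up in Section~\ref{sec:rank}), its non-vanishing at a point $[C]$ forces $d\nu_{[C]}\neq 0$, so the rank of $\nu$ at $[C]$ is at least $1$. Combining the two bounds yields rank exactly $1$ at every point of $\cH_3$.

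I do not expect a serious obstacle here once Theorem~\ref{thm:gg-genus3} is available. The only points needing care are the bookkeeping of Section~\ref{sec:rank} relating the rank of $\nu$ at a point to the non-vanishing of its $\C$-linear derivative there and to the non-vanishing of the Green--Griffiths invariant there, and the elementary observation that $\cH_3$ is a divisor in $\cM_3$ (which is exactly what confines this sharp statement to genus $3$). The substantive external input is the nowhere-vanishing of $\chi_{4,0,8}|_{\cH_3}$ supplied by \cite{vdg-kouvidakis}; everything else is formal.
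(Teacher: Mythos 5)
Your strategy is the same as the paper's: rank $\le 1$ because $\nu$ vanishes identically on the hyperelliptic locus, which is a divisor in $\cM_3$, and rank $\ge 1$ from Theorem~\ref{thm:gg-genus3} together with the non-vanishing of $\chi_{4,0,8}$ on the hyperelliptic locus. Your upper bound is correct and in fact more detailed than the paper's (the Weierstrass-point computation plus Pulte's base-point independence justifies the vanishing that the paper only asserts), and citing \cite[Prop.~6.5]{vdg-kouvidakis} for the non-vanishing of $\chi_{4,0,8}$ is the same external input the paper uses.

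The gap is in the sentence ``its non-vanishing at a point $[C]$ forces $d\nu_{[C]}\neq 0$,'' which you treat as formal bookkeeping. The rank at $[C]$ is governed by the canonical derivative $\nablabar\nu$, and by Lemma~\ref{lem:technical}(\ref{item:decomp-nu}) this decomposes as $\nablabar\nu=\deltatilde(\nu)+\nablabar f$ with $f$ only real analytic. What Theorem~\ref{thm:gg-genus3-final} actually provides at a hyperelliptic point is the non-vanishing of the canonical representative $\deltatilde(\nu)$: since $\chi_{4,0,-1}=\chi_{4,0,8}/\chi_9$ has a simple pole along the hyperelliptic locus, ``restriction to the hyperelliptic locus'' in Theorem~\ref{thm:gg-genus3} means the image under the projection of Lemma~\ref{lem:technical}(\ref{item:ses}) (essentially the residue along $V^\hyp$), not the value at $[C]$ of the class of $\nablabar\nu$ modulo coboundaries. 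So non-vanishing of $\chi_{4,0,8}|_{V^\hyp}$ by itself does not yet exclude the possibility that the term $\nablabar f$ cancels $\deltatilde(\nu)$ along the hyperelliptic locus, precisely where the cocycle/coboundary description of Proposition~\ref{prop:coho} no longer applies. The paper closes exactly this gap with Lemma~\ref{lem:technical}(\ref{item:decomp-nu}): $f$, and hence $\nablabar f$, vanishes on the hyperelliptic locus because $\nablabar\nu$ is anti-invariant under the involution of the double cover $V\to U$. Adding that one ingredient (or, alternatively, arguing that $\deltatilde(\nu)|_{V^\hyp}$ is a multiple of $d\chi_9$, hence points along the conormal direction of $V^\hyp$, whereas $\nablabar\Gr_F^0\cV$ takes values in $\Gr_F^{-1}\cV\otimes\pi^\ast\Omega^1_U(\log D)$) repairs your lower bound, after which your argument coincides with the paper's proof.
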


The proof of Theorem~\ref{thm:gg-genus3} makes essential use of the extension \cite{cfg} by Cl\'ery, Faber and van der Geer of Ichikawa's theory \cite{ichikawa} of Teichm\"uller modular forms in genus 3. It also gives a geometric interpretation of the meromorphic Teichm\"uller modular form $\chi_{4,0,-1}$, which has a simple pole along the hyperelliptic locus, as a multiple of the Green--Griffiths invariant of the normal function of the Ceresa cycle. Although it is a meromorphic section of an automorphic vector bundle, it is holomorphic when considered as a section of the tensor product of $\Omega_{\cM_3}^1$ with an automorphic vector bundle as we explain in detail in Section~\ref{sec:proof-cp}. This suggests that the definition of vector-valued Teichm\"uller modular forms should be enlarged to include sections of tensor products of automorphic vector bundles with the logarithmic de~Rham complex of moduli spaces of curves.

As a corollary of the proof of Theorem~\ref{thm:gg-genus3} we obtain a new proof of the result of Collino and Pirola used in the proof of Theorem~\ref{thm:max-rk} and the strengthening of Harris's result stated above. These results suggest that the Green--Griffiths invariant of the Ceresa normal function should be a Teichm\"uller modular form for all genera.

The idea of the inductive step in the proof of Theorem~\ref{thm:max-rk} is to study the behaviour of the Ceresa normal function $\nu$ in a neighbourhood of the boundary divisor $\Delta_0$ of the Deligne--Mumford moduli space $\cMbar_g$. For this we introduce the notion of {\em the residual normal function} associated to a normal function defined on the complement of a divisor. It is a normal function defined on the smooth locus of the divisor. In the case of the Ceresa normal function, the residual normal function is defined on the smooth locus of $\Delta_0$. We are able to compute it by computing its monodromy representation and show, using the inductive hypothesis, that it has rank $3g-4$. To complete the proof, we show that the normal rank of $\nu$ along the smooth locus of $\Delta_0$ is 1 using a result \cite{hain:biext} about real biextensions and a monodromy computation.

\subsection{Overview}

The proof of Theorem~\ref{thm:max-rk} requires an understanding of the limiting behaviour of admissible variations of MHS (as does the proof of Gao and Zhang). It also requires some background in the topology of moduli spaces of curves, such as the fact that normal functions over them are determined by their monodromy representations when the genus is at least 3. For this reason, in an attempt to make the paper accessible to a reasonably broad audience, we have included background material on both of these topics.

Part~\ref{part:prelims} is a brief review of the Griffiths infinitesimal invariant $\delta(\nu)$ of a normal function $\nu$ and Green's refinement $\deltabar(\nu)$ of it. Both vary holomorphically. In general, the derivative $\nabla \nu_\R$ of the normal function varies real analytically, but not holomorphically. This is the case with the Ceresa normal function. Since $\nabla \nu_\R$ is real, it is determined by its $(1,0)$ component, which we call the {\em canonical derivative} of $\nu$ and denote by $\nablabar \nu$. It determines both the rank of $\nu$ and its Green--Griffiths invariant. It plays a fundamental role in this paper. An obstruction to obtaining an explicit formula for the rank of the Ceresa normal function at a genus 3 curve is that $\nablabar \nu$ does not vary holomorphically.

In Part~\ref{part:ceresa} we present general results about the Green--Griffiths invariant and canonical derivative of the Ceresa normal function. Part~\ref{part:genus3} focuses on the genus 3 case. One key observation is that in genus 3 the canonical derivative $\nablabar \nu$ can be decomposed naturally
$$
\nablabar \nu = \deltabar(\nu) + \nablabar f
$$
where $f$ is real analytic and $\deltabar(\nu)$ is the Green--Griffiths invariant. Away from the hyperelliptic locus, the two components are sections of different automorphic vector bundles. At the moduli point of a non-hyperelliptic curve $C$, both components can be interpreted as quadratic forms on the tangent space of $\cM_3$ at the moduli point of $C$. Using the fact that both components are invariant under $\Aut C$, we show that $\nablabar f$ vanishes at the Klein quartic and, using the Collino--Pirola Theorem, that the other component has maximal rank. This establishes the base case of Theorem~\ref{thm:max-rk}. This part concludes with the proof of Theorem~\ref{thm:gg-genus3} and related results.

Admissible normal functions are, by definition \cite{hain:msri,saito}, period mappings of certain variations of mixed Hodge structure. Normal functions associated to homologically trivial algebraic cycles are admissible. This imposes strong conditions on their boundary behaviour. These restrictions play a key role in the inductive step of the proof of Theorem~\ref{thm:max-rk} and also in \cite{gao-zhang}. Part~\ref{part:vmhs} contains a review of admissible variations of MHS. We give a quick construction of the N\'eron model of a family of intermediate jacobians in the special case where the variation is a family of 1-dimensional nilpotent orbits. We also introduce the notion of the {\em residual normal function} $\nu_\Delta$ associated to an admissible normal function $\nu$ defined on $S-\Delta$, where $\Delta$ is a smooth divisor in the smooth variety $S$, and explain how its rank is related to that of $\nu$.

In Part~\ref{part:higher-genus} we apply the results in Part~\ref{part:vmhs} to prove the inductive step. We use the fact that normal functions over $\cM_{h,n}$ associated to variations corresponding to irreducible representations of $\Sp_h$ are determined by their monodromy representations when $h \ge 3$. (See Appendix~\ref{app:normal}.) This allows us to compute the monodromy of the Ceresa normal function in a neighbourhood of the smooth locus $\Delta$ of the boundary divisor $\Delta_0$ of $\cMbar_g$ as well as the residual normal function $\nu_\Delta$. The inductive hypothesis then implies that $\nu_\Delta$ has maximal rank $3g-4$. A result \cite{hain:biext} about real biextensions associated to curves then implies that the ``normal rank'' of $\nu$ along $\Delta$ is 1, which establishes the result.

\subsection{Conventions}

All algebraic varieties and stacks will be defined over the complex numbers. The moduli spaces of curves $\cM_g$ and of principally polarized abelian varieties $\cA_g$ will be regarded as stacks.

\bigskip

\noindent{\bf Acknowledgments:} I am especially grateful to Shou-Wu Zhang for asking me about the rank of the Ceresa normal function. Without his interest, this paper would never have seen the light of day. I am also grateful to Gerard van der Geer for bringing his work \cite{vdg-kouvidakis} with Kouvidakis to my attention. This yielded Corollary~\ref{cor:harris}. I am also indebted to Haohua Deng for pointing out several errors, now fixed, in Part~\ref{part:higher-genus} of an early draft of this paper and to the referees whose numerous helpful comments and corrections resulted in significant improvements to the paper. Finally, I would like to thank the Simons Foundation for travel support.

\part{Preliminaries}
\label{part:prelims}

This part is a review of some basic facts about normal functions, such as the construction of normal functions associated to families of homologically trivial cycles and the construction of the Griffiths and Green--Griffiths invariants of normal functions. The canonical derivative and the rank of a normal function are defined in Section~\ref{sec:rank}.

\section{Normal functions and families of algebraic cycles}

In this section, we review and elaborate on work of Griffiths \cite{griffiths} and Green \cite{green} on invariants of normal functions. We also recall the definition of the canonical foliation of a family of intermediate jacobians associated to a variation of Hodge structure of weight $-1$.

\subsection{The MHS associated to a homologically trivial cycle}
\label{sec:ext_cycle}

Suppose that $X$ is a smooth projective variety and that that $Z= \sum_j n_j Z_j$ is an algebraic $d$-cycle on $X$ where the $Z_j$ are distinct reduced irreducible subschemes of $X$ of dimension $d$. Denote the support of $Z$ by $|Z|$. When $Z$ is homologically trivial, it determines an extension
$$
\begin{tikzcd}[column sep=small]
0 \ar[r] & H_{2d+1}(X)(-d) \ar[r] & E_Z \ar[r] & \Z(0) \ar[r] & 0
\end{tikzcd}
$$
of mixed Hodge structures. It is obtained from the long exact homology sequence of $(X,|Z|)$ by pulling back along the map
$$
cl_Z : \Z(d) \to H_{2d}(|Z|) = \textstyle{\bigoplus_j} \Z[Z_j]
$$
that takes $1$ to $[Z] = \sum n_j[Z_j]$ and then twisting by $\Z(-d)$:
$$
\begin{tikzcd}[column sep=small]
0 \ar[r] & H_{2d+1}(X) \ar[r] & H_{2d+1}(X,|Z|) \ar[r] & H_{2d}(|Z|) \ar[r] & H_{2d}(X) \\
0 \ar[r] & H_{2d+1}(X) \ar[u,equal] \ar[r] & E_Z(d) \ar[u,hookrightarrow] \ar[r] & \Z(d) \ar[u,"cl_Z"'] \ar[r] & 0 
\end{tikzcd}
$$
The extension depends only on the rational equivalence class of $Z$. The extension $(E_Z)_\Z$ is generated by $H_{2d+1}(X;\Z)$ and $\G$, where $\partial \G = Z$.

\subsection{Extensions of mixed Hodge structure and intermediate jacobians}

Suppose that $V$ is a Hodge structure of negative weight. The group of extensions of mixed Hodge structure (MHS) of the form
$$
0 \to V \to E \to \Z \to 0
$$
forms a group $\Ext^1_\MHS(\Z,V)$ that is isomorphic to the complex torus
$$
J(V) := V_\C/(V_\Z + F^0 V).
$$
The extension determines, and is determined by, the image of
$$
\ee_\Z - \ee_F \in V_\C
$$
in $J(V)$, where $\ee_\Z \in E_\Z$ and $\ee_F \in F^0 E$ both project to $1\in\Z$.

The torus $J(V)$ is compact when $V$ has weight $-1$. This is because, in this case,
$$
V_\C = F^0 V \oplus \overline{F^0 V}.
$$
This implies that the composite
$$
V_\R \hookrightarrow V_\C \to V_\C/F^0 V
$$
is an isomorphism of real vector spaces which, in turn, induces an isomorphism of $J(V)$ with the compact real torus $V_\R/V_\Z$.

So a homologically trivial $d$-cycle $Z$ in a smooth projective variety $X$ determines a point $\nu_Z$ in the Griffiths intermediate jacobian
$$
J(H_{2d+1}(X)(-d)).
$$
$$
\textstyle{\int_\G} \in \Hom_\C(F^{d+1} H^{2d+1}(X),\C) \cong H_{2d+1}(X)(-d)/F^0
$$
where, as above, $\partial \G = Z$.

\begin{remark}
\label{rem:symmetry}
If a group $G$ acts on $X$, then it also acts on the intermediate jacobian $J(H_{2d+1}(X)(-d))$. For each $\gamma \in G$, the point $\gamma_\ast \nu_Z$ is the point determined by the cycle $\gamma_\ast Z$ in $X$. So if $\gamma_\ast Z$ is rationally equivalent to $Z$ for all $\gamma \in G$, then $\nu_Z$ is fixed by $G$.
\end{remark}

\subsection{Normal functions}

Suppose $\bV$ is a variation of Hodge structure of negative weight over a smooth variety $S$. Set
$$
\cV = \bV_\Z\otimes_\Z \cO_S.
$$
It is a flat holomorphic vector bundle over $S$ with connection
$$
\nabla : \cV \to \cV \otimes \Omega^1_S.
$$
Denote the $p$th term of its Hodge filtration by $F^p\cV$. The connection satisfies Griffiths transversality
$$
\nabla : F^p \cV \to F^{p-1}\cV \otimes \Omega^1_S.
$$

To such a variation, we can associate a family of intermediate jacobians $J(\bV)$ over $S$. It is the quotient of the vector bundle $\cV/F^0\cV$ by the image of $\bV_\Z \to \cV/F^0\cV$. Its fiber over $s \in S$ is $J(V_s)$, the intermediate jacobian associated to the fiber $V_s$ of $\bV$ over $s$.

Each extension
\begin{equation}
\label{eqn:extn}
0 \to \bV \to \bE \to \Z_S \to 0
\end{equation}
of variations of mixed Hodge structure gives rise to a holomorphic section $\nu_\bE$ of the bundle
$$
J(\bV) \to S
$$
of intermediate jacobians. Its value over $s\in S$ is the point of $J(V_s)$ that corresponds to the extension
$$
0 \to V_s \to E_s \to \Z \to 0
$$
of MHS obtained by restricting (\ref{eqn:extn}) to $s$.

Every continuous section $\sigma$ of $J(\bV)$ determines a class $c(\sigma)$ in $H^1(S,\bV_\Z)$. A detailed description of $c(\sigma)$ can be found in \cite[\S4.1]{hain:normal}. Since $H^1(S,\bV_\Z)$ is the group of congruence classes of extensions of $\Z_S$ by $\bV_\Z$ in the category of local systems over $S$, each section $\sigma$ of $J(\bV)$ determines an extension of local systems
$$
0 \to \bV \to \bE \to \Z_S \to 0
$$
over $S$. The value of $\sigma$ at $s\in S$ determines a MHS on the fiber of $\bE$ over $s$.

\begin{definition}
A section of $J(\bV) \to S$ is a {\em normal function} if the corresponding extension is a variation of MHS.
\end{definition}

In other words, the group of normal function sections of $J(\bV)$ is isomorphic to the group of extensions of $\Z_S$ by $\bV$ in the category of variations of MHS over $S$. Normal functions have to be holomorphic. This corresponds to the condition that the Hodge filtration varies holomorphically. They also have to satisfy the {\em Griffiths infinitesimal period relation}, which is equivalent to the condition that the canonical flat connection on
$$
\cE := \bE\otimes_\Z \cO_S
$$
satisfies Griffiths transversality, $\nabla : F^p \cE \to F^{p-1} \cE \otimes \Omega^1_S$.

\begin{remark}
The variations of MHS that arise in algebraic geometry satisfy additional conditions. Such variations are called {\em admissible} variations of MHS. The normal functions that correspond to admissible variations of MHS are called {\em admissible normal functions}. The additional conditions on an admissible normal function $\nu$ restrict the asymptotic behaviour of $\nu(s)$ as $s$ approaches the boundary of $S$. More precisely the extension has to satisfy conditions which ensure that limit mixed Hodge structures exist. Admissible variations of MHS are defined in Section~\ref{sec:avmhs}.
\end{remark}

\subsection{Families of homologically trivial cycles}
\label{sec:families_cycles}
 
Suppose now that $S$ is a smooth variety and that $f : X \to S$ is a family of smooth projective varieties. Suppose that $Z$ is a relative $d$-cycle. That is, it is an algebraic cycle on $X$ whose restriction $Z_s$ to each fiber $X_s := f^{-1}(s)$ over $s\in S$ is a $d$-cycle. Suppose that each $Z_s$ is homologically trivial in $X_s$. Denote the local system over $S$ whose fiber over $s\in S$ is $H_{2d+1}(X_s)(-d)$ by $\bV$. It underlies a variation of Hodge structure of weight $-1$. The construction in Section~\ref{sec:ext_cycle} gives an extension of local systems
$$
0 \to \bV \to \bE_Z \to \Z_S \to 0
$$
and a mixed Hodge structure on the fiber $E_s$ over $s\in S$ that is an extension of $\Z$ by $V_s$. The work of Griffiths \cite{griffiths} implies that the corresponding section $\nu_Z : S \to J(\bV)$ is holomorphic and satisfies his infinitesimal period relation. It is therefore an extension of variations of MHS. The corresponding section $\nu_Z$ of $J(\bV)$ is thus a normal function. The main result of \cite{steenbrink-zucker} implies that it is admissible.

\subsection{The foliation of $J(\bV)$}
\label{sec:foliation}

Suppose now that $\bV$ has weight $-1$. This implies that the map
$$
J(\bV_\R) := \bV_\R/\bV_\Z \to J(\bV)
$$
is an isomorphism of families of tori over $S$. The family $J(\bV_\R)$ is a locally constant family of tori and is thus foliated by its locally constant leaves. Consequently, the family $J(\bV) \to S$ is a foliated family of complex tori. This foliation $\sF$ is a {\em complex} foliation, but not a {\em holomorphic} foliation. This means that each leaf of $\sF$ is a complex submanifold of $J(\bV)$ but that $\sF$ is not, in general, locally biholomorphic to a product foliation.

\begin{definition}
A section of $J(\bV)$ is {\em locally constant} if its image lies in a leaf of $\sF$ or, equivalently, it lies in the image of $H^0(S,\bV_\R) \to H^0(S,J(\bV))$.
\end{definition}

\subsection{The Griffiths infinitesimal invariant}

Griffiths' infinitesimal invariant is an obstruction to a normal function being locally constant. Suppose that $\nu$ is a normal function section of $J(\bV)$. Let
$$
0 \to \bV \to \bE \to \Z \to 0
$$
be the corresponding variation of MHS over $S$. Suppose that $s \in S$ and that $U$ is an open neighbourhood of $s$ that is biholomorphic to a complex ball. Since $U$ is Stein, the restriction of the bundles $F^p \cE$ to $U$ are holomorphically trivial, and since $U$ is contractible, the restriction of $\bE_\Z$ to $U$ is trivial. Let
$$
\ee_F \in H^0(U,F^0 \cE) \text{ and } \ee_\Z \in H^0(U,\bE_\Z)
$$
be lifts of $1 \in \Z$. Then
$$
v := \ee_\Z - \ee_F \in H^0(U,\cV)
$$
is a lift of the restriction of $\nu$ to $U$. Since $\ee_F$ is a local section of $F^0\cE$, Griffiths transversality implies that
\begin{equation}
\label{eqn:nabla_holo}
\nabla \nu \in H^0(U,F^{-1}\cV\otimes\Omega^1_S).
\end{equation}
Since $\ee_F$ is well defined mod $H^0(U,F^0\cV)$, this descends to a well defined element $\delta_U(\nu)$ of
$$
\cK(U) := \ker\{\nabla : H^0(U, F^{-1}\cV\otimes\Omega^1_S)\to H^0(U, F^{-2}\cV\otimes \Omega^2_S)\}/\nabla H^0(U,F^0\cV).
$$
The functor $U \mapsto \cK(U)$ is a presheaf on $S$.

This obstruction group has a more intrinsic description in terms of the Hodge filtration of $\cV\otimes\Omega^\bdot_S$, which is defined by
$$
F^p(\cV\otimes \Omega_S^j) := (F^{p-j}\cV) \otimes \Omega_S^j.
$$
Each $(F^p(\cV\otimes \Omega_S^\bdot),\nabla)$ is a complex of locally free complex analytic sheaves on $S$. Denote their cohomology sheaves by $\cH^\bdot(F^p(\cV\otimes \Omega_S^\bdot))$. Since $\cH^1(F^0(\cV\otimes \Omega_S^\bdot))$ is the sheafification of $\cK$, the germs of the $\delta_U(\nu)$ at each $s\in S$ define a section
$$
\delta(\nu) \in H^0(S,\cH^1(F^0(\cV\otimes\Omega_S^\bdot))).
$$
This is the Griffiths invariant of $\nu$. It vanishes if and only if $\nu$ is locally constant on $S$.

\subsection{Green's refinement of the Griffiths invariant}

Green's refinement \cite[\S1]{green} of Griffiths construction is a useful tool for understanding the Griffiths invariant. A key point is that each graded quotient
\begin{equation}
\label{eqn:Gr-Hodge}
\Gr_F^p (\cV\otimes \Omega_S^\bdot) := [0 \to \Gr^p_F \cV \to \Gr^{p-1}_F \cV \otimes \Omega^1_S \to \Gr^{p-2}_F \cV \otimes \Omega^2_S \to \cdots]
\end{equation}
of the Hodge filtration of $\cV\otimes\Omega_S^\bdot$ is a complex of vector bundles as the differential $\nablabar$ induced by the connection $\nabla$ is $\cO_S$-linear.

Denote the homology sheaves of the complex (\ref{eqn:Gr-Hodge}) by $\cH^\bdot(\Gr_F^p(\cV\otimes \Omega_S^\bdot))$. Suppose that $\nu$ is a normal function. Denote the image of $\delta(\nu)$ under the map
\begin{equation}
\label{eqn:quot}
H^0(S,F^{-1}\cV\otimes\Omega^1_S/\nabla (F^0\cV)) \to H^0(S,\cH^1(\Gr_F^0(\cV\otimes \Omega_S^\bdot)))
\end{equation}
by $\deltabar(\nu)$. In favourable situations, Green's invariant $\deltabar(\nu)$ determines the Griffiths invariant. 

\begin{proposition}[Green]
\label{prop:green}
If $\cH^1(\Gr_F^p(\cV\otimes \Omega_S^\bdot))$ vanishes for all $p>0$, then the map (\ref{eqn:quot}) is injective, so that $\delta(\nu)$ vanishes if and only if $\deltabar(\nu)$ vanishes.
\end{proposition}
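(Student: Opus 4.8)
The plan is to deduce the injectivity of (\ref{eqn:quot}), on the subsheaf where $\delta(\nu)$ actually lives, from the injectivity of the underlying morphism of sheaves $\cH^1(F^0(\cV\otimes\Omega_S^\bdot)) \to \cH^1(\Gr_F^0(\cV\otimes\Omega_S^\bdot))$, and to establish that morphism is injective via a vanishing statement for the intermediate pieces of the Hodge filtration of the complex $\cV\otimes\Omega_S^\bdot$.

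First I would record that, for every integer $p$, there is a short exact sequence of complexes of $\cO_S$-modules
\[
0 \to F^{p+1}(\cV\otimes\Omega_S^\bdot) \to F^p(\cV\otimes\Omega_S^\bdot) \to \Gr_F^p(\cV\otimes\Omega_S^\bdot) \to 0,
\]
the middle and right-hand objects being genuine complexes because $\nabla$ respects $F^\bdot(\cV\otimes\Omega_S^\bdot)$ by Griffiths transversality and $F^p(\cV\otimes\Omega_S^j) = F^{p-j}\cV\otimes\Omega_S^j$. Passing to cohomology sheaves yields the long exact sequence
\[
\cdots \to \cH^j(F^{p+1}(\cV\otimes\Omega_S^\bdot)) \to \cH^j(F^p(\cV\otimes\Omega_S^\bdot)) \to \cH^j(\Gr_F^p(\cV\otimes\Omega_S^\bdot)) \to \cH^{j+1}(F^{p+1}(\cV\otimes\Omega_S^\bdot)) \to \cdots .
\]

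The key step is the claim that $\cH^1(F^p(\cV\otimes\Omega_S^\bdot)) = 0$ for every $p\ge 1$, proved by descending induction on $p$. Since the Hodge filtration of $\cV$ is finite, $F^{p-1}\cV = 0$ for $p$ sufficiently large, so then $F^p(\cV\otimes\Omega_S^\bdot)$ vanishes in degrees $0$ and $1$ and the vanishing of $\cH^1$ is automatic; this is the base case. For the inductive step, suppose $p\ge 1$ and $\cH^1(F^{p+1}(\cV\otimes\Omega_S^\bdot)) = 0$. In the long exact sequence for this $p$, the segment $\cH^1(F^{p+1}(\cV\otimes\Omega_S^\bdot)) \to \cH^1(F^p(\cV\otimes\Omega_S^\bdot)) \to \cH^1(\Gr_F^p(\cV\otimes\Omega_S^\bdot))$ has left term zero by the inductive hypothesis and right term zero by the hypothesis of the proposition (as $p>0$), so exactness forces $\cH^1(F^p(\cV\otimes\Omega_S^\bdot)) = 0$. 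Applying this with $p=1$ and using the long exact sequence for $0 \to F^1 \to F^0 \to \Gr_F^0 \to 0$, the vanishing $\cH^1(F^1(\cV\otimes\Omega_S^\bdot)) = 0$ shows $\cH^1(F^0(\cV\otimes\Omega_S^\bdot)) \to \cH^1(\Gr_F^0(\cV\otimes\Omega_S^\bdot))$ is injective. Since $\Gamma(S,-)$ is left exact and $\delta(\nu)$ is a global section of the subsheaf $\cH^1(F^0(\cV\otimes\Omega_S^\bdot)) \subset F^{-1}\cV\otimes\Omega_S^1/\nabla(F^0\cV)$ on which (\ref{eqn:quot}) restricts to $\Gamma(S,-)$ of this sheaf map, it follows that $\delta(\nu) = 0$ if and only if $\deltabar(\nu) = 0$.

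I do not expect a genuine obstacle. The only points requiring a little care are bookkeeping: identifying the map in (\ref{eqn:quot}) with the morphism on $\cH^1$ induced by the surjection of complexes $F^0(\cV\otimes\Omega_S^\bdot) \twoheadrightarrow \Gr_F^0(\cV\otimes\Omega_S^\bdot)$, via the identification of $\cH^1(F^0(\cV\otimes\Omega_S^\bdot))$ with the sheafification of $U\mapsto \cK(U)$; and observing that finiteness of the Hodge filtration is exactly what makes the base case of the induction work. Everything else is the standard long-exact-sequence chase.
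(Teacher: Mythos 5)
Your argument is correct and is essentially the paper's own dévissage: the paper runs the induction on the quotient complexes $F^0(\cV\otimes\Omega_S^\bdot)/F^p$, producing a chain of injections $\cH^1(F^0/F^{p+1})\hookrightarrow\cH^1(F^0/F^p)$ from the vanishing of $\cH^1(\Gr_F^p(\cV\otimes\Omega_S^\bdot))$ for $p>0$, whereas you run it on the subcomplexes $F^p$ and show $\cH^1(F^p(\cV\otimes\Omega_S^\bdot))=0$ for $p\ge 1$ before applying $0\to F^1\to F^0\to \Gr_F^0\to 0$. This is the same elementary long-exact-sequence argument in dual form, with the same use of the finiteness of the Hodge filtration and the same passage to global sections at the end.
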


\begin{proof}
Green proves this using a spectral sequence argument. Here we give an elementary direct proof. Since
$$
0 \to \Gr_F^1(\cV\otimes\Omega_S^\bdot) \to F^0(\cV\otimes\Omega_S^\bdot)/F^2 \to \Gr_F^0(\cV\otimes\Omega_S^\bdot) \to 0
$$
is exact, so is
$$
\cH^1(\Gr_F^1(\cV\otimes\Omega_S^\bdot)) \to \cH^1(F^0(\cV\otimes\Omega_S^\bdot)/F^2) \to \cH^1(\Gr_F^0(\cV\otimes\Omega_S^\bdot)).
$$
The vanishing of $\cH^1(\Gr_F^1(\cV\otimes\Omega_S^\bdot))$ implies the injectivity of
$$
\cH^1(F^0(\cV\otimes\Omega_S^\bdot)/F^2) \hookrightarrow \cH^1(\Gr_F^0(\cV\otimes\Omega_S^\bdot)).
$$
Similarly, the vanishing of $\cH^1(\Gr_F^2(\cV\otimes \Omega_S^\bdot))$ and the exactness of
$$
0 \to \Gr_F^2(\cV\otimes\Omega_S^\bdot) \to F^0(\cV\otimes\Omega_S^\bdot)/F^3 \to F^0(\cV\otimes\Omega_S^\bdot)/F^2 \to 0
$$
implies the injectivity of
$$
\cH^1(F^0(\cV\otimes\Omega_S^\bdot)/F^3) \hookrightarrow \cH^1(F^0(\cV\otimes\Omega_S^\bdot)/F^2).
$$
One continues in this manner to obtain injections
\begin{equation}
\label{eqn:stage_p}
\cH^1(F^0(\cV\otimes\Omega_S^\bdot)/F^{p+1}) \hookrightarrow \cH^1(F^0(\cV\otimes\Omega_S^\bdot)/F^p)
\end{equation}
for all $p>0$. Since
$$
F^0(\cV\otimes\Omega_S^\bdot)/F^m = F^0(\cV\otimes\Omega_S^\bdot)
$$
for $m$ sufficiently large, the composition of the injections (\ref{eqn:stage_p})
$$
\cH^1(F^0(\cV\otimes\Omega^\bdot_S)) \to \cH^1(\Gr_F^0(\cV\otimes\Omega_S^\bdot))
$$
is injective. The result follows by taking global sections.
\end{proof}

\section{The rank of a normal function}
\label{sec:rank}

As above, $\bV$ is a variation of Hodge structure of weight $-1$ over $S$ and $\nu : S \to J(\bV)$ is normal function.

\subsection{Definition of the rank}

We can regard the foliation $\sF$ of $J(\bV)$ (Section~\ref{sec:foliation}) as a smooth sub-bundle of the tangent bundle of $J(\bV)$. Since the leaves of the foliation are complex submanifolds of $J(\bV)$, we can regard $\sF$ as a complex sub-bundle of $TJ(\bV)$. The projection $J(\bV) \to S$ induces an isomorphism $\sF_v \to T_s S$ for all $v\in J(V_s)$. This gives a splitting of the sequence
$$
\begin{tikzcd}
&& \sF_v \ar[d] \ar[dr,"\cong"] \\
0 \ar[r] & V_s/F^0 \ar[r] & T_vJ(\bV) \ar[r] & T_s S \ar[r] & 0
\end{tikzcd}
$$
Let $\phi : TJ(\bV) \to \cV/F^0$ be the corresponding projection. It is $\C$-linear. Composing it with $(d\nu)_s$ gives a $\C$-linear map $\phi \circ (d\nu)_s : T_s S \to V_s/F^0 V_s$:
$$
\begin{tikzcd}
0 \ar[r] & V_s/F^0V_s \ar[r] & T_{\nu(s)}J(\bV) \ar[r] \ar[l,bend left,"\phi"] & T_sS \ar[ll,bend right,"\phi \circ (d\nu)_s"'] \ar[l,bend left,"(d\nu)_s"] \ar[r] & 0 
\end{tikzcd}
$$

\begin{definition}
The rank $\rank_s\nu$ of $\nu$ at $s\in S$ is the rank of $\phi \circ (d\nu)_s$. Define
$$
\rank_S \nu = \max_{s\in S} \rank_s \nu
$$
\end{definition}

The rank of $\nu$ vanishes if and only if its Griffiths invariant $\delta(\nu)$ vanishes. In general, the relation between the Griffiths invariant and the rank is more subtle.

\subsection{The canonical derivative and the Green--Griffiths invariant}

In order to relate the rank of a normal function to the rank of the associated invariants defined by Griffiths and Green, we need to consider real lifts of normal functions.

We will work with complexified tangent spaces as we need to compare invariants of real analytic and holomorphic lifts of a normal function. In this subsection, and only this subsection, we denote the real tangent space of a complex manifold $Y$ at $y\in Y$ by $T_y Y$. It has a canonical almost complex structure $\ac : T_y Y \to T_y Y$. Its complexification decomposes
$$
T_y Y \otimes \C =  T_y' Y \oplus T_y''Y
$$
into the $\ac=i$ and $\ac=-i$ eigenspaces. The $\ac=i$ eigenspace $T_y' Y$ is the holomorphic tangent space. An $\R$-linear map $T_y Y \to V$ into a complex vector space $V$ is complex linear if and only if the induced map $T_y Y\otimes \C \to V$ vanishes on $T_yY''$. The inclusion of $T_y Y$ into $T_y Y\otimes \C$ induces a complex linear isomorphism
$$
T_y Y \to (T_y Y\otimes \C)/T_y'' Y \cong T_y' Y.
$$

\begin{definition}
The {\em canonical derivative} $\nablabar_s \nu$ of $\nu$ at $s$ is the composite
$$
\begin{tikzcd}
T'_s S \ar[r,"(d\nu)_s"] & T_{\nu(s)}' J(\bV) \ar[r,"\phi"] & V_s/F^0.
\end{tikzcd}
$$
\end{definition}
Griffiths transversality implies that its image lies in the subspace $\Gr^{-1}_F V_s$ of $V_s/F^0$. The function $\nablabar : s \mapsto \nablabar_s\nu$ is a real analytic section of $\Gr^{-1}_F \cV$. 

Denote the sheaf of real analytic functions on $S$ by $\sE_S$. The normal function $\nu$ corresponds to a real analytic section $\nu_\R$ of $J(\bV_\R)$ under the isomorphism $J(\bV_\R) \cong J(\bV)$. Locally it lifts to a section $\nutilde_\R$ of $\bV_\R\otimes \sE_S$. Its derivative $(\nabla \nutilde_\R)_s \in V_{s,\R}\otimes T_s^\vee S$ at $s\in S$ does not depend on the choice of $\nutilde_\R$, so we will denote it by $(\nabla \nu_\R)_s$.

The goal of the next result is to clarify the relationship between the various derivatives of $\nu$.

\begin{proposition}
\label{prop:derivatives}
The canonical derivative $\nablabar_s\nu$ is related to the derivatives of $\nu$ and $\nu_\R$ at $s\in S$ by the commutative diagram
\begin{equation}
\label{eqn:derivatives}
\begin{tikzcd}[column sep=64]
T_s S \ar[r,"(d\nu_\R)_s"]  \ar[rr,bend left=24,"(\nabla \nu_\R)_s"] \ar[d,"\cong"] & T_{\nutilde_\R(s)} J(\bV_\R) \ar[d,"\cong"] \ar[r,"\phi_\R"] & V_{s,\R} \ar[d,"\cong"] \\
T'_s S \ar[r,"(d\nu)_s"] \ar[drr,bend right=16,"\nablabar_s \nu"] & T_{\nu(s)}' J(\bV)  \ar[r,"\phi"] & V_s/F^0 \\
& & \Gr_F^{-1} V_s \ar[u,hookrightarrow]
\end{tikzcd}
\end{equation}
In particular,
$$
\rank_s \nu = \rank \nablabar_s \nu = \rank_\R (\nabla \nu_\R)_s/2.
$$
\end{proposition}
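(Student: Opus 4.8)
The plan is to establish the commutativity of diagram~(\ref{eqn:derivatives}) square by square, and then to read off the rank identities formally from the fact that all three vertical arrows are $\C$-linear isomorphisms. Most of this is a matter of unwinding the definitions of $\phi$, $\phi_\R$, $\nablabar_s\nu$ and $(\nabla\nu_\R)_s$; there are really only two substantive inputs, namely that the isomorphism of families of real tori $J(\bV_\R)\cong J(\bV)$ intertwines the locally constant foliation on the left with $\sF$ on the right, and that the resulting identification is biholomorphic.

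\emph{The top square and the upper triangle.} Since $J(\bV_\R)=\bV_\R/\bV_\Z$ is a locally constant family of tori, the tangent space at a point lying over $s$ splits canonically as the sum of the leaf direction and the vertical space $V_{s,\R}$, and $\phi_\R$ is by definition the projection onto the second summand. Under $J(\bV_\R)\cong J(\bV)$ the locally constant leaves correspond to the leaves of $\sF$ (this is how $\sF$ was defined), so this splitting of $TJ(\bV_\R)$ corresponds to the splitting of $TJ(\bV)$ furnished by $\sF$, whence $\phi_\R$ corresponds to $\phi$. To see that $\phi_\R\circ(d\nu_\R)_s=(\nabla\nu_\R)_s$, choose a local flat trivialization $\bV_\R|_U\cong U\times V_\R$; a local lift $\nutilde_\R$ is then a $V_\R$-valued function, $\nabla\nutilde_\R=d\nutilde_\R$, and $\nu_\R$ is the composite of $\nutilde_\R$ with the fiberwise quotient $V_\R\to V_\R/V_\Z$. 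Differentiating gives $(d\nu_\R)_s(X)=\big(X,(d\nutilde_\R)_s(X)\big)$ in (leaf direction)$\,\oplus\, V_{s,\R}$, so $\phi_\R\circ(d\nu_\R)_s=(d\nutilde_\R)_s=(\nabla\nu_\R)_s$; independence of the lift is clear since two lifts differ by a flat section of $\bV_\Z$.

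\emph{The middle square and the lower triangle.} The vertical maps $T_sS\cong T'_sS$ and $T_{\nu_\R(s)}J(\bV_\R)\cong T'_{\nu(s)}J(\bV)$ are the $\C$-linear isomorphisms of a real tangent space with the $(1,0)$-part of its complexification, and $V_{s,\R}\cong V_s/F^0$ is the weight $-1$ isomorphism $V_{s,\R}\hookrightarrow V_{s,\C}\twoheadrightarrow V_{s,\C}/F^0$. Under the identification $J(\bV_\R)\cong J(\bV)$ the section $\nu_\R$ becomes $\nu$, which is holomorphic; hence the $(1,0)$-part of $(d\nu_\R)_s$ is the holomorphic differential $(d\nu)_s$, which is the commutativity of the left square. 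The right square commutes because $\phi$ corresponds to $\phi_\R$, as just shown, and $\phi$ is itself $\C$-linear. Thus $\nablabar_s\nu=\phi\circ(d\nu)_s$ is carried to $(\nabla\nu_\R)_s$ by the vertical isomorphisms. Finally, by~(\ref{eqn:nabla_holo}) Griffiths transversality gives $\nabla\nu\in H^0(U,F^{-1}\cV\otimes\Omega^1_S)$, so its image mod $F^0$, which is precisely $\nablabar_s\nu$, lands in $F^{-1}V_s/F^0V_s=\Gr_F^{-1} V_s$.

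\emph{The rank identities.} By the definition of the rank, $\rank_s\nu$ is the rank of $\phi\circ(d\nu)_s$, i.e.\ of $\nablabar_s\nu$, so the first equality is tautological (the real tangent space $T_sS$ being identified with $T'_sS$). For the second, the $\C$-linear isomorphism $V_{s,\R}\cong V_s/F^0$ carries the real subspace $\im(\nabla\nu_\R)_s$ onto the complex subspace $\im\nablabar_s\nu$, so
$$
\rank_\R(\nabla\nu_\R)_s=\dim_\R\im(\nabla\nu_\R)_s=\dim_\R\im\nablabar_s\nu=2\dim_\C\im\nablabar_s\nu=2\rank_s\nu.
$$
I expect the only place demanding real care to be the first input mentioned above: verifying, from the construction of $J(\bV)$ and its foliation, that $J(\bV_\R)\cong J(\bV)$ matches the locally constant leaves with the leaves of $\sF$ and is biholomorphic, so that $\phi_\R$ and the real differential of $\nu_\R$ translate into $\phi$ and $(d\nu)_s$. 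Once that is in hand the rest is bookkeeping, together with the elementary identity $\dim_\R W=2\dim_\C W$ for a complex subspace $W$.
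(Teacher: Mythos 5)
Your proposal is correct and is essentially the paper's argument: the paper dismisses the proposition as ``an exercise in the definitions using the fact that $d\nu_\R$ is $\C$-linear,'' and your square-by-square verification (identifying the foliation splitting with the flat splitting of $J(\bV_\R)$, using holomorphy of $\nu$ to identify the $(1,0)$-part of $d\nu_\R$ with $d\nu$, and invoking Griffiths transversality for the $\Gr_F^{-1}$ landing) is exactly that exercise carried out, with the rank identities following from the $\C$-linearity just as intended.
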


\begin{proof}
This is an exercise in the definitions using the fact that $d\nu_\R$ is $\C$ linear.
\end{proof}

\begin{corollary}
\label{cor:nablabar}
The projection of the $(1,0)$ component $\nabla'\nu_\R$ of the derivative $\nabla\nu_\R$ to a section of $(\cV/F^0\cV)\otimes \sE_S\otimes \Omega^1_S$ equals $\nablabar \nu \in H^0(S,\Gr_F^{-1}\cV \otimes \Omega_S^1\otimes \sE_S)$.
\end{corollary}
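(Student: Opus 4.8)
The plan is to read the corollary off the commutative diagram~(\ref{eqn:derivatives}) of Proposition~\ref{prop:derivatives}; the only real work is to match the two projections in the statement — first to the $(1,0)$-part, then to $\cV/F^0\cV$ — with the complexified (co)tangent maps appearing there.

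First I would spell out $\nabla'\nu_\R$. By construction $\nabla\nu_\R$ is the real analytic section of $\bV_\R\otimes_\R\sE_S\otimes_\R\Omega^1_{S,\R}$ whose value at $s$ is $(\nabla\nu_\R)_s\in V_{s,\R}\otimes T_s^\vee S$, independent of the local real lift $\nutilde_\R$. Complexifying the cotangent directions, $T_s^\vee S\otimes\C = (T'_sS)^\vee\oplus(T''_sS)^\vee$ with $\Omega^1_S=(T'S)^\vee$; then $\nabla'\nu_\R$ is by definition the component of $\nabla\nu_\R$ lying in $\bV_\R\otimes_\R(\sE_S\otimes\C)\otimes_\C\Omega^1_S$, equivalently the section whose value at $s$ is the restriction to $T'_sS\subset T_sS\otimes\C$ of the $\C$-linear extension of $(\nabla\nu_\R)_s$. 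Using $\bV_\R\otimes_\R(\sE_S\otimes\C)=\cV\otimes\sE_S$, the left-hand side of the corollary is the image of this restriction under the fibrewise projection $\cV\to\cV/F^0\cV$.

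Next I would invoke the diagram. Its top row expresses $(\nabla\nu_\R)_s$ as $\phi_\R\circ(d\nu_\R)_s$, and the proof of Proposition~\ref{prop:derivatives} records that $d\nu_\R$ is $\C$-linear, so $(d\nu_\R)_s\otimes\C$ carries $T'_sS$ into $T'_{\nutilde_\R(s)}J(\bV_\R)$. Composing with the complexification of $\phi_\R$ and transporting through the three vertical isomorphisms of~(\ref{eqn:derivatives}) — the complex-linear identifications $T_sS\cong T'_sS$, $T_{\nutilde_\R(s)}J(\bV_\R)\cong T'_{\nu(s)}J(\bV)$ and $V_{s,\R}\cong V_{s,\C}/F^0=V_s/F^0$ — the restriction of $(\nabla\nu_\R)_s\otimes\C$ to $T'_sS$ becomes exactly $\phi\circ(d\nu)_s$, i.e. $\nablabar_s\nu$. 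Griffiths transversality (noted after the definition of $\nablabar$) places this in $\Gr^{-1}_F V_s$, and real analyticity of the resulting section is inherited from $\nu_\R$; hence the projection of $\nabla'\nu_\R$ equals $\nablabar\nu$ in $H^0(S,\Gr^{-1}_F\cV\otimes\Omega^1_S\otimes\sE_S)$, as claimed. The argument is essentially bookkeeping with complexifications; the only delicate point, which I would state explicitly, is the identification of ``take the $(1,0)$-component of a real $\Omega^1_{S,\R}$-valued section'' with ``restrict the complexified derivative to the holomorphic tangent bundle'', together with the compatibility of the vertical isomorphisms in~(\ref{eqn:derivatives}) with these complexifications — beyond that, the corollary is an immediate restatement of the commutativity already established.
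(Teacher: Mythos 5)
Your proposal is correct and follows the same route the paper intends: the corollary is read off the commutative diagram~(\ref{eqn:derivatives}) of Proposition~\ref{prop:derivatives}, with the only substantive step being the identification of the $(1,0)$-component of the real derivative with the restriction of its complexification to $T'_sS$, which is exactly the ``exercise in the definitions using the $\C$-linearity of $d\nu_\R$'' that the paper invokes. Your explicit bookkeeping of the complexifications and of the compatibility of the vertical isomorphisms is a faithful expansion of that argument, so nothing further is needed.
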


It is important to note that, in general, $\nablabar \nu$ is a real analytic, but not holomorphic, section of $\Gr_F^{-1}\cV \otimes \Omega_S^1$, even though $\deltabar(\nu)$, its reduction mod $\nablabar \Gr_F^0 \cV \otimes \sE_S$, is a holomorphic section of $\Gr_F^{-1}\cV \otimes \Omega_S^1/\nablabar\Gr_F^0\cV$. This will be important in Section~\ref{sec:proof-cp} where the possibility that $\nablabar \nu$ is not holomorphic is the obstruction to obtaining an explicit formula for $\nablabar\nu$ in genus 3.

\subsection{Symmetry}
\label{sec:symmetry_families}

Suppose that $S$ is a complex analytic variety and that $\bV \to S$ is a variation of Hodge structure of weight $-1$. Suppose that $G$ is a finite group that acts holomorphically on $S$ and that this action lifts to an action on $\bV \to S$ as a variation of Hodge structure. If $s \in S$ is a fixed point of the action, then $G$ acts on $T_s S$ and as an automorphism group of the Hodge structure $V_s$ and its intermediate jacobian $J(V_s)$. If $\nu$ is a $G$-invariant normal function section of $J(\bV)$, then $\nu(s) \in J(V_s)$ will be a fixed point of the $G$ action.

\begin{lemma}
\label{lem:symmetry}
Under these assumptions, $G$ acts on the diagram (\ref{eqn:derivatives}) in Proposition~\ref{prop:derivatives}. That is, $G$ acts on each vertex and each morphism is $G$ equivariant. \qed
\end{lemma}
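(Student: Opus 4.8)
The plan is to observe that every object and arrow in the diagram (\ref{eqn:derivatives}) is produced by a construction natural in the data $(S,\bV,\nu)$, so that an action of $G$ compatible with this data automatically acts on the whole diagram. I would organize the verification into three steps.

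\emph{Step 1: the $G$-action on the vertices.} Since $G$ acts holomorphically on $S$ and fixes $s$, it acts $\R$-linearly on $T_sS$ commuting with the almost complex structure, hence on $T_s'S$, compatibly with the canonical isomorphism $T_sS\xrightarrow{\sim}T_s'S$. Since the action lifts to $\bV$ as a variation of Hodge structure, $G$ acts on the local system $\bV_\Z$ (hence on $\bV_\R$ and on $\cV$), preserving $\nabla$ and the Hodge filtration $F^\bdot\cV$; it therefore acts holomorphically on the family $J(\bV)\to S$, and — since $\nu$ is $G$-invariant so that $\nu(s)$ is a $G$-fixed point of $J(V_s)$ — on $T_{\nu(s)}'J(\bV)$ and on $T_{\nutilde_\R(s)}J(\bV_\R)$. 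Passing to the fixed fiber over $s$, $G$ acts on $V_s$ as a group of Hodge isometries, hence on $V_{s,\R}$, $V_s/F^0$ and $\Gr_F^{-1}V_s$; because $G$ commutes with complex conjugation on $V_{s,\C}$ and preserves $F^0V_s$, it preserves the weight $-1$ splitting $V_{s,\C}=F^0V_s\oplus\overline{F^0V_s}$, so the isomorphism $V_{s,\R}\xrightarrow{\sim}V_s/F^0$, the inclusion $\Gr_F^{-1}V_s\hookrightarrow V_s/F^0$, and the identification $J(\bV_\R)\cong J(\bV)$ are all $G$-equivariant.

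\emph{Step 2: the $G$-action on $\phi$ and $\phi_\R$.} The foliation $\sF$ is the locally constant foliation of $J(\bV_\R)=\bV_\R/\bV_\Z$, so $G$, acting through the flat structure of $\bV_\R$, permutes its leaves and hence preserves $\sF$ as a complex sub-bundle of $TJ(\bV)$. At the $G$-fixed point $\nu(s)$ this says $G$ preserves the splitting of $0\to V_s/F^0\to T_{\nu(s)}J(\bV)\to T_sS\to 0$ determined by $\sF$, so the induced projection $\phi\colon T'_{\nu(s)}J(\bV)\to V_s/F^0$, and likewise its real analogue $\phi_\R$, are $G$-equivariant.

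\emph{Step 3: the $G$-action on $d\nu$ and conclusion.} Invariance of $\nu$ means $\nu\circ g=g\circ\nu$ as maps $S\to J(\bV)$ (the left $g$ acting on $S$, the right on $J(\bV)$), and similarly for $\nu_\R$; differentiating at the fixed point $s$ gives $(d\nu)_s\circ g_\ast=g_\ast\circ(d\nu)_s$ and $(d\nu_\R)_s\circ g_\ast=g_\ast\circ(d\nu_\R)_s$. The remaining arrows $\nablabar_s\nu=\phi\circ(d\nu)_s$ and $(\nabla\nu_\R)_s=\phi_\R\circ(d\nu_\R)_s$ are then $G$-equivariant as composites, and the diagram commutes $G$-equivariantly since it commutes by Proposition~\ref{prop:derivatives}. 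I do not anticipate a genuine obstacle: the argument is pure naturality bookkeeping, and the only point requiring a moment's care is that $\sF$ and $\phi$ are defined by canonical constructions out of exactly the flat and Hodge-theoretic structure that $G$ preserves, so that functoriality applies verbatim.
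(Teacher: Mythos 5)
Your proof is correct, and it is precisely the routine naturality verification that the paper treats as immediate (the lemma is stated with \qed and no written proof): $G$ preserves all the flat and Hodge-theoretic structure from which the vertices, the foliation splitting $\phi$, and the derivatives are canonically constructed, and equivariance of the arrows follows by differentiating the $G$-invariance of $\nu$ at the fixed point. Nothing further is needed.
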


\subsection{The rank filtration}

We can filter $S$
$$
S = S_d \supseteq S_{d-1} \supseteq \cdots \supseteq S_0 \supseteq S_{-1} := \emptyset
$$
by the strata $S_r$ where $\nu$ has rank $\le r$, where $d=\dim S$. Since $\nu_\R$ is real analytic, each $S_r$ is a real analytic subvariety of $S$. Gao and Zhang \cite[Thm.~1.4]{gao-zhang} have proved a stronger result. They define the {\em foliation filtration} $S_\sF$ of $S$ to be the union of the leaves of the ``pullback foliation'' $\nu^\ast \sF$ of codimension $\le r$ and show that each $S_\sF(r)$ is a complex algebraic subvariety of $S$. The rank condition implies that $S_r$ contains $S_\sF(r)$ for all $r$. 

The following much weaker density result is sufficient for our purposes.

\begin{proposition}
\label{prop:density}
If there is a point $x_0 \in S$ where $\rank_{x_0} \nu = \dim S$, then
$$
\Sigma := S_{-1+\dim S} = \{x \in S : \rank_x \nu < \dim S\}
$$
is a real analytic subvariety of $S$ of real codimension $\ge 1$. In particular, $S-\Sigma$ is dense in $S$. \qed
\end{proposition}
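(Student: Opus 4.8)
The plan is to reduce everything to the real-analyticity of the rank function $x \mapsto \rank_x \nu$ and the fact that on a connected real-analytic manifold a real-analytic function (or a real-analytic tensor) cannot vanish on a set of full measure unless it vanishes identically. Concretely, by Proposition~\ref{prop:derivatives} we have $\rank_x \nu = \rank \nablabar_x \nu$, where $\nablabar \nu$ is a real-analytic section of the bundle $\Gr_F^{-1}\cV \otimes \Omega_S^1$ over $S$ (Corollary~\ref{cor:nablabar}). The locus $\Sigma = S_{-1+\dim S}$ is exactly the set of $x$ where the $\C$-linear map $\nablabar_x\nu : T_x' S \to \Gr_F^{-1} V_x$ fails to be injective, equivalently where all $(\dim S)\times(\dim S)$ minors of a matrix representing $\nablabar_x\nu$ in local real-analytic frames vanish.

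First I would fix, around an arbitrary point, a local real-analytic trivialization of $\Gr_F^{-1}\cV$ and a local holomorphic coordinate system on $S$, so that $\nablabar\nu$ is represented by a matrix of real-analytic functions. The vanishing of all maximal minors is then a (finite) system of real-analytic equations, so $\Sigma$ is locally, hence globally, a real-analytic subvariety of $S$. Next I would show $\Sigma \neq S$: this is precisely where the hypothesis enters, since the existence of $x_0$ with $\rank_{x_0}\nu = \dim S$ says $x_0 \notin \Sigma$. A proper real-analytic subvariety of a connected real-analytic manifold has real codimension $\ge 1$ (its complement is open, dense, and connected by the standard fact that a real-analytic set of codimension $\ge 1$ does not locally disconnect space). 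Since $\cM_g$ is connected and the argument is local, $\Sigma$ has real codimension $\ge 1$, and therefore $S - \Sigma$ is dense.

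The one genuine subtlety — the step I would flag as the main point to get right rather than a true obstacle — is the passage from "$\Sigma$ is locally a real-analytic subvariety" to "$\Sigma$ is globally a real-analytic subvariety of codimension $\ge 1$," which requires knowing that $\Sigma$ is not all of $S$ on each connected component. If $S$ is connected this follows immediately from the existence of $x_0$; in the application $S = \cM_g$ is connected (as a stack/orbifold its coarse space is irreducible), so there is nothing more to check. Everything else is the routine linear-algebra-of-minors bookkeeping together with the cited elementary facts about real-analytic sets, and I would present it in a single short paragraph, citing Proposition~\ref{prop:derivatives} and Corollary~\ref{cor:nablabar} for the identification of $\Sigma$ with the degeneracy locus of the real-analytic tensor $\nablabar\nu$.
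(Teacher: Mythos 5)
Your argument is correct and is essentially the paper's own (the paper gives no written proof: it relies on the immediately preceding remark that each rank stratum $S_r$ is a real analytic subvariety because $\nu_\R$ is real analytic, which is exactly the content your minors-of-$\nablabar\nu$ computation spells out). The connectedness caveat you flag is the only point needing care, and it is harmless since in the application $S=\cM_g$ is connected.
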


\part{The Ceresa cycle}
\label{part:ceresa}

In this part we recall the construction of the normal function of the Ceresa cycle and discuss generalities related to its Green--Griffiths invariant and canonical derivative.

\section{The normal function of the Ceresa cycle}

Suppose that $g\ge 3$. The universal curve $f : \cC \to \cM_{g,1}$ has a tautological section that takes the moduli point $[C,x]$ of $(C,x)$ to the point $x \in C \subset \cC$. This allows us to imbed $\cC$ into the universal jacobian $\cJ := \Jac_{\cC/\cM_{g,1}}$ via a map $\mu_x : \cC \to \cJ$ whose restriction to the fiber over $[C,x]$ is the Abel--Jacobi map (\ref{eqn:abel-jacobi}):
$$
\begin{tikzcd}
\cC \ar[d,"f"] \ar[r,hookrightarrow,"\mu_x"] & \cJ \ar[d] \\
\cM_{g,1} \ar[u,bend left=36pt, "x"] \ar[r,equals] & \cM_{g,1}
\end{tikzcd}
$$
The relative Ceresa cycle $\cZ_x$ in $\cJ$ is
$$
\cZ_x := (\mu_x)_\ast \cC - \iota_\ast (\mu_x)_\ast \cC,
$$
where $\iota : u \mapsto -u$ is the involution of $\cJ$. Its restriction to the fiber over $[C,x] \in \cM_{g,1}$ is the Ceresa cycle $Z_{C,x}$ in $\Jac C$.

Set
$$
\bH = R^1 f_\ast \Z(1).
$$
Poincar\'e duality implies that it has fiber $H_1(C;\Z)$ over $[C,x]$. The variation $\Lambda^3 \bH$ has fiber $H_3(\Jac C)$ over $[C,x]$. The normal function $\nu_x$ associated to $\cZ_x$ is an admissible section of $J(\Lambda^3 \bH(-1))$ over $\cM_{g,1}$.

Pontryagin product with the class of $C$ defines an inclusion $H_1(\Jac C) \hookrightarrow H_3(\Jac C)(-1)$ of Hodge structures and therefore an inclusion $\Jac C \hookrightarrow J(H_3(\Jac C)(-1))$. Pulte \cite{pulte} showed that if $x,x' \in C$, then
$$
\nu_x(C) - \nu_{x'}(C) = 2([x] - [x']) \in \Jac C
$$
so that the image of $\nu_x(C)$ in $J(PH_3(\Jac C)(-1))$ does not depend on the choice of the base point $x\in C$. Set
$$
\Lambda^3_0 \bH (-1) = (\Lambda^3 \bH)(-1)/\bH.
$$
Since $\cJ = J(\bH)$, Pulte's result implies that the normal function $\nu_x$ descends to a normal function section of $J(\Lambda^3_0 \bH(-1))$ over $\cM_g$:
\begin{equation}
\label{eqn:nu}
\begin{tikzcd}
J(\Lambda^3_0 \bH(-1)) \ar[r] & \cM_g \ar[l,bend right,"\nu"']
\end{tikzcd}
\end{equation}

\begin{remark}
Since $\bH$ is defined on $\cM_g^c$, the normal functions $\nu_x$ and $\nu$ extend to normal functions over the moduli spaces $\cM_{g,1}^c$ and $\cM_g^c$ of curves of compact type. This follows from \cite[Thm.~7.1]{hain:msri}.
\end{remark}

We now take $X = \cM_g^c$, the moduli space of stable genus $g$ curves of compact type (or a suitable open subset), where $g \ge 3$. Let $f : \cC \to \cM_g^c$ be the universal curve. Set
$$
\bH = R^1 f_\ast \Z(1).
$$
This is the weight $-1$ polarized variation of Hodge structure (PVHS) over $\cM_g^c$ whose fiber over the moduli point $[C]$ of $C$ is
$$
H_1(C,\Z) \cong H^1(C,\Z(1)).
$$
(We will also regard this as a PVHS over $\cA_g$ and this as its pullback along the period map.) We will take $\bV$ to be the weight $-1$ PVHS
$$
\bV = (\Lambda^3 \bH/\theta\cdot \bH)(-1)
$$
where $\theta \in \Lambda^2 \bH$ is the section corresponding to the polarization. The Ceresa cycle gives a normal function section $\nu$ of $J(\bV)$ over $\cM_g^c$. We will give the Collino--Pirola formula for the Green--Griffiths invariant $\deltabar_C(\nu)$ of the Ceresa cycle at $[C]$ and a partial formula for its canonical derivative $\nablabar_C \nu$ at $[C]$.

\section{Geometry of the period mapping}
\label{sec:period-map}

Because we are computing derivatives and so working locally, it is often more convenient to work on Torelli space and Siegel space rather than on the moduli spaces $\cM_g$ and $\cA_g$.

Suppose that $g\ge 2$. Torelli space $\cT_g$ is the quotient of Teichm\"uller space by the Torelli group. It is the moduli space of pairs $(C;\basis)$ consisting of ``framed curves'' --- smooth projective curves of genus $g$ and a symplectic basis of its first integral homology group. It is a complex manifold. The symplectic group $\Sp_g(\Z)$ acts on $\cT_g$ by its action on the basis. The moduli space $\cM_g$ of smooth projective curves of genus $g$ is the stack quotient $\Sp_g(\Z)\bs \cT_g$.

The Siegel upper half plane $\h_g$ of degree $g$ is defined by
$$
\h_g = \{\tau \in \bM_g(\C) :  \tau = \tau^T \text{ and }\Im (\tau) \text{ is positive definite}\}.
$$
It is the moduli space of $g$ dimensional principally polarized abelian varieties $(A,\theta)$ of dimension $g$ together with a basis $(\ba_1,\dots,\ba_g,\bb_1,\dots,\bb_g)$ of $H_1(A;\Z)$ that is symplectic with respect to the polarization $\theta : H_1(A) \otimes H_1(A) \to \Z$. The period mapping $\cT_g \to \h_g$ takes a framed curve to its framed jacobian:
$$
(C;\basis) \to (\Jac C;\basis).
$$
It is holomorphic. The symplectic group acts on $\h_g$ via its action on the basis. The stack quotient $\Sp_g(\Z)\bs \h_g$ is the moduli space $\cA_g$ of principally polarized abelian varieties. The period map is $\Sp_g(\Z)$ equivariant and descends to a map $\cM_g \to \cA_g$.

The cotangent space of $\cT_g$ at a point corresponding to a framing of $C$ is $H^0(\Omega_C^{\otimes 2})$ and the cotangent space of its image in $\h_g$ is $S^2 H^0(\Omega_C)$. The differential is the multiplication mapping
\begin{equation}
\label{eqn:mult}
S^2 H^0(\Omega_C) \to H^0(\Omega_C^{\otimes 2}).
\end{equation}
By Noether's theorem, it is surjective when $C$ is not hyperelliptic. When $C$ is hyperelliptic, it has maximal rank tangent to the hyperelliptic locus. Consequently the period mapping is a local embedding away from the hyperelliptic locus and also a local embedding when restricted to the hyperelliptic locus.

The involution $u\mapsto -u$ of $H_1$ induces an isomorphism
$$
(A;\ba_1,\dots,\ba_g,\bb_1,\dots,\bb_g)  \to (A;-\ba_1,\dots,-\ba_g,-\bb_1,\dots,-\bb_g).
$$
Since a genus $g$ curve $C$ is hyperelliptic if and only if there is an isomorphism
$$
(C;\ba_1,\dots,\ba_g,\bb_1,\dots,\bb_g)  \to (C;-\ba_1,\dots,-\ba_g,-\bb_1,\dots,-\bb_g)
$$
the period mapping $\pi : \cT_g \to \h_g$ is an \'etale 2:1 covering onto its image away from the hyperelliptic locus where it is an embedding.

\section{The Green--Griffiths invariant of the Ceresa cycle}

Suppose that $C$ is a smooth projective curve of genus $g\ge 3$. Set
$$
A = \Gr^{-1}_F H_1(C) \text{ and } B = \Gr^0_F H_1(C) \cong H^0(\Omega^1_C).
$$
The polarization induces an isomorphism $A\cong B^\vee$. The cotangent space $T_{[\Jac C]}^\vee\cA_g$ to $\cA_g$ at the moduli point $[\Jac C]$ of $\Jac C$ is isomorphic to $S^2 B$.

The fiber $V$ of $\bV$ over $C$ is the primitive {\em quotient}
$$
PH_3(\Jac C)(-1) := H_3(\Jac C)(-1)/\theta\cdot H_1(C)
$$
of $H_3(\Jac C)(-1)$. The canonical polarization $PH^3(\Jac C) \otimes PH^3(\Jac C) \to \Q$ induces an isomorphism
$$
PH_3(\Jac C) \to PH^3(\Jac C)(2).
$$
We therefore have isomorphisms
$$
\begin{tikzpicture}
\matrix[matrix of math nodes,row sep=1mm,column sep=0mm]
{
\Gr_F^{-2} V & \cong  & V^{-2,1} & = & PH^{0,3}(\Jac C) & \cong & \Lambda^3 A \\
\Gr_F^{-1} V & \cong  & V^{-1,0} & = & PH^{1,2}(\Jac C) & \cong & \displaystyle{\frac{\Lambda^2 A \otimes B}{\theta\cdot A}}\\
\Gr_F^{0} V & \cong & V^{0,-1} & = & PH^{2,1}(\Jac C) & \cong & \displaystyle{\frac{ A \otimes \Lambda^2 B}{\theta\cdot B}} \\
\Gr_F^{1} V & \cong & V^{1,-2} & = & PH^{3,0}(\Jac C) & \cong & \Lambda^3 B \\
};
\end{tikzpicture}
$$
where the polarization $\theta$ corresponds to the identity in $\End(B) \cong A\otimes B$.

The Green--Griffiths invariant $\deltabar_C(\nu)$ of $\nu$ at $C$ lies in the homology of the complex $\Gr_F^0(V\otimes \Lambda^\bdot T_{[C]}^\vee\cM_g)$:
\begin{equation}
\label{eqn:ceresa_cplex}
\frac{ A \otimes \Lambda^2 B}{\theta\cdot B} \to \frac{\Lambda^2 A \otimes B}{\theta\cdot A}  \otimes H^0(\Omega^{\otimes 2}_C) \to \Lambda^3 A \otimes \Lambda^2 H^0(\Omega_C^{\otimes 2}). 
\end{equation}
It is the pullback along the period mapping $\cM_g \to \cA_g$ of the fiber
\begin{equation}
\label{eqn:jacobian_cplex}
\frac{ A \otimes \Lambda^2 B}{\theta\cdot B} \to \frac{\Lambda^2 A \otimes B}{\theta\cdot A}  \otimes S^2 B \to \Lambda^3 A \otimes \Lambda^2 S^2 B
\end{equation}
over $[\Jac C] \in \cA_g$ of the complex of vector bundles $\Gr_F^0 (\cV \otimes \Omega^\bdot_{\cA_g})$. The differential is induced by the map
$$
\nabla : B \to A \otimes S^2 B
$$
that is adjoint to the projection $B^{\otimes 2} \to S^2 B$. It is a complex in the category of $\GL(B)$ modules. As such, it does not depend on the point $[\Jac C]$ of $\cA_g$.

The derivative $\nabla \nu_\R$ of $\nu$ at $[C]$ can be regarded as the complex linear map
$$
\nablabar_C \nu : T_{[C]} \cM_g \to \Gr_F^{-1} V = \frac{\Lambda^2 A \otimes B}{\theta\cdot A}
$$
via the prescription in Proposition~\ref{prop:derivatives}. The corresponding point in
$$
\frac{\Lambda^2 A \otimes B}{\theta\cdot A} \otimes H^0(\Omega^{\otimes 2}_C)
$$
is a 1-cocycle in the complex (\ref{eqn:ceresa_cplex}) and represents the Green-Griffiths invariant $\deltabar_C(\nu)$ of $\nu$ at $[C]$ in its homology.

\begin{remark}
The complex (\ref{eqn:ceresa_cplex}) and the Griffiths invariant of the Ceresa cycle have been studied by various authors. See \cite{collino-pirola} and the references therein.
\end{remark}

\section{Symmetry}

Symmetry can be a useful tool for computing $\nablabar_C \nu$. The isotropy group $G_C$ of a point of $\cT_g$ corresponding to a framing of $C$ is a subgroup of $\Sp_g(\Z)$ isomorphic to $\Aut C$. 

Set $\bV = \Lambda^3_0 \bH$, regarded as a variation of Hodge structure over $\cT_g$ and $\cV = \bV \otimes \cO_{\cT_g}$.

\begin{proposition}
\label{prop:symmetry}
Suppose that $g\ge 3$. If $C$ is a smooth projective curve of genus g, then $\Aut C$ acts on the stalk at $[C]$ of the complex $\cV\otimes \Omega_{\cM_g}^\bdot$ and preserves its Hodge filtration. It fixes
\begin{enumerate}

\item the Griffiths invariant $\delta_C(\nu)$ of the normal function of the Ceresa cycle in the stalk at $[C]$ of $\cH^1(F^0 \cV\otimes \Omega_{\cM_g}^1)$,

\item the cocycle $\nablabar_C \nu \in \Gr_F^{-1} V_C \otimes T_C^\vee \cM_g$,

\item the Green--Griffiths invariant $\deltabar_C(\nu)$ of $\nu$ at $C$,

\item the derivative $\nabla \nu_\R$ at $[C]$ of the real lift of $\nu$.

\end{enumerate}
\end{proposition}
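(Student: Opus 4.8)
The plan is to deduce all four assertions from Lemma~\ref{lem:symmetry} by passing to Torelli space, where the moduli point in question is a genuine fixed point of $\Aut C$. First I would fix a framing $t_0 = [C;\basis] \in \cT_g$ lying over $[C] \in \cM_g$. Its isotropy group $G_C$ in $\Sp_g(\Z)$ is isomorphic to $\Aut C$ and fixes $t_0$, and the $\Sp_g(\Z)$-action on $\cT_g$ lifts to the variation $\bV = \Lambda^3_0\bH$. Hence $G_C$ acts on a $G_C$-stable contractible neighbourhood of $t_0$ and, by functoriality of the associated flat bundle $\cV$ together with its connection and Hodge filtration, on the restriction of the complex $\cV\otimes\Omega^\bullet_{\cT_g}$ there, commuting with $\nabla$ and preserving $F^\bullet$. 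Taking stalks at $t_0$, which computes the stalk at $[C]$ of $\cV\otimes\Omega^\bullet_{\cM_g}$ together with its $\Aut C$-action since $\cM_g = \Sp_g(\Z)\bs\cT_g$, yields the opening assertion of the proposition, including the $G_C$-equivariant identification $T_C^\vee\cM_g \cong T_{t_0}^\vee\cT_g$.

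The main input is that the Ceresa normal function $\nu$, viewed over $\cT_g$, is pulled back from $\cM_g$: this is precisely the descent of $\nu_x$ from $\cM_{g,1}$ to $\cM_g$ carried out via Pulte's identity just before \eqref{eqn:nu}. (Equivalently, one could invoke Remark~\ref{rem:symmetry} directly: for $\gamma\in\Aut C$ one has $\gamma_\ast Z_{C,x} = Z_{C,\gamma(x)}$, and the class of the latter in $J(PH_3(\Jac C)(-1))$ is independent of the base point by Pulte's theorem.) Consequently $\nu : \cT_g \to J(\bV)$ is $\Sp_g(\Z)$-invariant, hence $G_C$-invariant as a section near $t_0$, so $\nu(t_0)\in J(V_C)$ is fixed by $G_C$ and Lemma~\ref{lem:symmetry} applies verbatim: $G_C$ acts equivariantly on every vertex and arrow of the diagram \eqref{eqn:derivatives}. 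The bottom-left composite then exhibits $\nablabar_C\nu : T_C\cM_g \to \Gr_F^{-1}V_C$ as a $G_C$-equivariant map, that is, the associated element of $\Gr_F^{-1}V_C\otimes T_C^\vee\cM_g$ is $G_C$-fixed, giving (2); the top row shows that the real derivative $(\nabla\nu_\R)_{[C]}$ is $G_C$-fixed, giving (4).

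For (1), the Griffiths invariant $\delta_C(\nu)$ is manufactured from the variation $0\to\bV\to\bE\to\Z\to 0$ by a construction --- choice of local lifts $\ee_F\in F^0\cE$ and $\ee_\Z\in\bE_\Z$ of $1$, formation of the section $\ee_\Z-\ee_F$, application of $\nabla$, and passage to $\cH^1(F^0(\cV\otimes\Omega^\bullet))$ --- each step of which is natural in the variation of MHS. Since $G_C$ acts on this variation (because it fixes $\nu$), fixes $t_0$, and respects the connection and Hodge filtration, it fixes the stalk class $\delta_C(\nu)$. Then (3) follows either by applying the same naturality to the $G_C$-equivariant map \eqref{eqn:quot} carrying $\delta(\nu)$ to $\deltabar(\nu)$, or, concretely, by noting that $\deltabar_C(\nu)$ is the homology class of the $1$-cocycle $\nablabar_C\nu$ in the $G_C$-equivariant complex \eqref{eqn:ceresa_cplex}, which is $G_C$-fixed by (2). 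I do not expect any step to present a genuine obstacle; the only points demanding care are the identification $G_C\cong\Aut C$ and the fact that invariance must be taken for $\nu$ over $\cM_g$ rather than for $\nu_x$ over $\cM_{g,1}$, for which Pulte's theorem is the essential ingredient.
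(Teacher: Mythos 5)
Your proposal is correct and follows the paper's own route: pass to Torelli space, observe that the $\Sp_g(\Z)$-action (with $\nu$ invariant because it is defined over $\cM_g$, via Pulte) restricts to an action of the isotropy group $G_C\cong\Aut C$ near the framed point, and apply Lemma~\ref{lem:symmetry}. The extra naturality arguments you supply for items (1) and (3) are just a fuller writing-out of what the paper leaves implicit.
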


\begin{proof}
The group $\Sp_g(\Z)$ acts on the diagram
$$
\begin{tikzcd}
J(\bV) \ar[r] & \cT_g \ar[l,bend right,"\nu"]
\end{tikzcd}
$$
It follows that $\Aut C$ acts on the restriction of this family to an analytic neighbourhood of $[C]$. Now apply Lemma~\ref{lem:symmetry}.
\end{proof}

\part{The Ceresa cycle in genus 3}
\label{part:genus3}

In this part we further investigate the Green--Griffiths invariant of the Ceresa normal function in genus 3 and use a result of Collino and Pirola \cite{collino-pirola} to prove the base case of Theorem~\ref{thm:max-rk}. We then give a new proof of the Collino--Pirola Theorem using Teichm\"uller forms and use it to prove Theorem~\ref{thm:gg-genus3}.

\section{The Green--Griffiths invariant in genus 3}
\label{sec:genus3proof}

In genus 3, the period map $\cM_3 \to \cA_3$ is a local biholomorphism away from the hyperelliptic locus. It induces an isomorphism of the complex (\ref{eqn:ceresa_cplex}) with (\ref{eqn:jacobian_cplex}), which enables us, when working at a non-hyperelliptic curve, to work locally on $\cA_3$ and exploit the action of $\GL(B)$ on the complex (\ref{eqn:jacobian_cplex}).

\subsection{Some representation theory}

Denote the top exterior power of a representation $V$ by $\det V$. Since the pairing $B \otimes \Lambda^2 B \to \det B$ is non-singular in genus 3, we have a canonical isomorphism
$$
\Lambda^2 B \cong A \otimes \det B
$$
of $\GL(B)$ modules. Similarly, $\Lambda^2 A \cong B \otimes \det A$. We also have a canonical isomorphism
$$
S^2 S^2 B \cong S^4 B \oplus S^2 \Lambda^2 B
$$
which holds for all $g \ge 2$. When $g=3$, we thus have a canonical isomorphism
$$
S^2 S^2 B \cong S^4 B \oplus (S^2 A \otimes (\det B)^2).
$$
Both summands are irreducible.

\begin{lemma}
\label{lem:rep}
In genus 3, the are natural $\GL(B)$ module isomorphisms
$$
\frac{\Lambda^2 A \otimes B}{\theta \cdot A} \cong S^2 B \otimes \det A \text{ and } 
\frac{A \otimes \Lambda^2 B}{\theta \cdot B} \cong S^2 A \otimes \det B.
$$
\end{lemma}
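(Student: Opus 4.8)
The plan is to realize both quotients as explicit direct-sum decompositions of $\GL(B)$-modules in which the subspace being killed is pinned down by a dimension count. Write $B$ for the standard $\GL(B)$-module, so $A = B^\vee$, and recall that the polarization class $\theta$, viewed inside $\Lambda^2(A\oplus B)$, has nonzero component only in the summand $A\otimes B = \End(B)$, where it is the identity. Multiplication by $\theta$ on
$$
\Lambda^3(A\oplus B) = \Lambda^3 A \;\oplus\; (\Lambda^2 A\otimes B)\;\oplus\;(A\otimes \Lambda^2 B)\;\oplus\; \Lambda^3 B
$$
therefore restricts to two $\GL(B)$-equivariant maps
$$
\theta\cdot : A \longrightarrow \Lambda^2 A \otimes B, \qquad \theta\cdot : B \longrightarrow A\otimes\Lambda^2 B .
$$
First I would check that both are injective. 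This is an elementary computation in coordinates: writing $\theta = \sum_i f^i\wedge e_i$ for dual bases $e_i$ of $B$ and $f^i$ of $A$, one finds $\theta\wedge b = \sum_{i<j}(b_j f^i - b_i f^j)\otimes(e_i\wedge e_j)$ for $b = \sum_j b_j e_j$, which vanishes only for $b=0$ (and symmetrically for $A$). Alternatively it follows from hard Lefschetz for the symplectic form $\theta$. So $\theta\cdot A$ and $\theta\cdot B$ are $\GL(B)$-submodules of dimension $g=3$.

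Next I would invoke the two genus-$3$ isomorphisms already recorded above, $\Lambda^2 B \cong A\otimes\det B$ and $\Lambda^2 A \cong B\otimes\det A$, coming from the nonsingular pairings $B\otimes\Lambda^2 B\to\det B$ and $A\otimes\Lambda^2 A\to\det A$, together with the observation that $\det A\otimes\det B$ is canonically trivial because $A = B^\vee$. Substituting the first into $\Lambda^2 A\otimes B$ and the second into $A\otimes\Lambda^2 B$ yields canonical $\GL(B)$-isomorphisms
$$
\Lambda^2 A\otimes B \;\cong\; B\otimes B\otimes\det A \;\cong\; \big(S^2 B \oplus \Lambda^2 B\big)\otimes\det A,
$$
$$
A\otimes\Lambda^2 B \;\cong\; A\otimes A\otimes\det B \;\cong\; \big(S^2 A \oplus \Lambda^2 A\big)\otimes\det B .
$$
In each case the two summands are irreducible and non-isomorphic, of dimensions $6$ and $3$, so by Schur's lemma every $\GL(B)$-submodule is a sub-sum of them; in particular the unique $3$-dimensional submodule on the right is the $\Lambda^2$-summand.

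The key point is then that the $3$-dimensional submodule $\theta\cdot A\subset \Lambda^2 A\otimes B$ cannot lie in the $6$-dimensional irreducible $S^2 B\otimes\det A$ and is not a diagonal subspace, so it must equal $\Lambda^2 B\otimes\det A$; hence
$$
\frac{\Lambda^2 A\otimes B}{\theta\cdot A} \;\cong\; S^2 B\otimes\det A,
$$
and the identical argument applied to $\theta\cdot B\subset A\otimes\Lambda^2 B$ gives $\dfrac{A\otimes\Lambda^2 B}{\theta\cdot B}\cong S^2 A\otimes\det B$. (If one prefers to avoid the forcing argument, one can instead compute the image of $\theta\cdot$ explicitly under the contraction isomorphism and observe directly that it lands in the $\Lambda^2$-factor.) The representation theory here is routine; the only real thing to watch is where the genus hypothesis enters — it is used exactly in $\Lambda^2 B\cong A\otimes\det B$, which for general $g$ only reads $\Lambda^2 B\cong\Lambda^{g-2}A\otimes\det B$ — and the injectivity of $\theta\cdot$, neither of which is a serious obstacle.
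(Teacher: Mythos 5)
Your proof is correct and takes essentially the same approach as the paper: the paper's argument is the chain $\Lambda^2 A \otimes B \cong B^{\otimes 2} \otimes \det A \cong (S^2 B\otimes \det A) \oplus A$, with $\theta\cdot A$ identified (implicitly, via Schur) with the unique $A$-isotypic summand, which is exactly your forcing argument. Your explicit injectivity computation and the symmetric treatment of the second quotient merely spell out what the paper leaves tacit (``the second is proved similarly, or by taking duals'').
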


\begin{proof}
The first isomorphism follows from the isomorphisms
$$
\Lambda^2 A \otimes B \cong B^{\otimes 2} \otimes \det A \cong (S^2 B \oplus \Lambda^2 B) \otimes \det A \cong (S^2 B\otimes \det A) \oplus A.
$$
The second is proved similarly, or by taking duals.
\end{proof}

\subsubsection{Green--Griffiths cohomology}

The following result combines computations of Nori \cite[pp.~371--372]{nori} and Collino--Pirola \cite[Lem.~4.2.3]{collino-pirola}.

\begin{proposition}
\label{prop:coho}
If $C$ is a non-hyperelliptic curve of genus 3, then 
\begin{enumerate}

\item If $p \ge 0$, then $H^0(\Gr_F^p(V\otimes \Lambda^\bdot S^2 B))$ vanishes.

\item If $p > 0$, then $H^1(\Gr_F^p(V\otimes \Lambda^\bdot S^2 B))$ vanishes.

\item\label{item:cocycles} The space of 1-cocycles in the complex $\Gr_F^0 (V\otimes \Lambda^\bdot S^2 B)$ is naturally isomorphic to
$$
S^2 S^2 B \otimes \det A \cong (S^4 B \otimes \det A) \oplus (S^2 A \otimes \det B)
$$
and the space of 1-coboundaries to $S^2 A \otimes \det B$. Consequently,
$$
H^1(\Gr_F^0 (V\otimes \Lambda^\bdot S^2 B)) \cong S^4 B \otimes \det A.
$$
\end{enumerate}
\end{proposition}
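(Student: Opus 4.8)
The complexes $\Gr_F^p(V \otimes \Lambda^\bdot S^2 B)$ are, as recorded in the text, complexes of $\GL(B) = \GL_3$-modules that do not depend on the point of $\cA_3$, with differentials induced from the canonical map $\mu : B \to A \otimes S^2 B$ adjoint to the multiplication $B^{\otimes 2} \twoheadrightarrow S^2 B$, and with graded pieces $\Gr_F^{-2} V \cong \det A$, $\Gr_F^{-1} V \cong S^2 B \otimes \det A$, $\Gr_F^0 V \cong S^2 A \otimes \det B$, $\Gr_F^1 V \cong \det B$ (Lemma~\ref{lem:rep}). So the plan is to decompose every term of these finite complexes into irreducibles and read off the cohomology by Schur's lemma. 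First I would dispose of $p \geq 2$: there the complex is concentrated in cohomological degrees $\geq p-1 \geq 1$ (because $\Gr_F^k V = 0$ for $k \geq 2$), so (1) is automatic and (2) reduces, at $p = 2$, to the injectivity of the leftmost differential $\det B \otimes S^2 B \to \Gr_F^0 V \otimes \Lambda^2 S^2 B$; since $\det B \otimes S^2 B$ is irreducible this is the same as its being nonzero. For $p = 0$ and $p = 1$ I would write out the three- and four-term complexes explicitly, decomposing each term using the identifications already in the text ($S^2 S^2 B \cong S^4 B \oplus S^2 \Lambda^2 B$), together with $\Lambda^2 S^2 B \cong S_{(3,1)}B$, $S^2 B \otimes S^2 B \cong S^4 B \oplus S_{(3,1)}B \oplus S^2 \Lambda^2 B$, $S^2 A \otimes S^2 B \cong \mathbf{1} \oplus \mathfrak{sl}(B) \oplus S_{(2,0,-2)}B$, and the Littlewood--Richardson rule. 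One checks that every irreducible that occurs does so with multiplicity at most one in each term in which it appears, so that by Schur's lemma all differentials are block-diagonal, each determined by one scalar per isotypic component common to its source and target.

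With this in hand the cohomology is essentially forced. The graded connection $\nablabar : \Gr_F^{-1} V \to \Gr_F^{-2} V \otimes S^2 B$ is a map between two copies of the irreducible $S^2 B \otimes \det A$, hence an isomorphism provided it is nonzero; granting this, the degree-one differential $\Gr_F^{-1} V \otimes S^2 B \to \Gr_F^{-2} V \otimes \Lambda^2 S^2 B$ of $\Gr_F^0(V \otimes \Lambda^\bdot S^2 B)$ is a nonzero scalar times $\det A$ tensored with the wedge map $S^2 B \otimes S^2 B \to \Lambda^2 S^2 B$, whose kernel is $S^2 S^2 B \otimes \det A$; this is the space of $1$-cocycles in (3). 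The leftmost differential $\Gr_F^0 V \to \Gr_F^{-1} V \otimes S^2 B$ starts from the irreducible $S^2 A \otimes \det B$, so by Schur it is either zero or an isomorphism onto the unique summand $S^2 \Lambda^2 B \otimes \det A \cong S^2 A \otimes \det B$ of its target; its nonvanishing therefore yields at once $H^0(\Gr_F^0(\cdots)) = 0$ and the identification of the $1$-coboundaries with $S^2 A \otimes \det B$, hence $H^1(\Gr_F^0(\cdots)) \cong (S^2 S^2 B \otimes \det A)/(S^2 \Lambda^2 B \otimes \det A) \cong S^4 B \otimes \det A$, which is (3). For $p = 1$ the same bookkeeping gives $H^0 = 0$ from the nonvanishing of $\det B \to \Gr_F^0 V \otimes S^2 B$, and $H^1 = 0$ because the two isotypic types $S_{(3,1,-1)}B$ and $S_{(2,1,0)}B$ common to $\Gr_F^0 V \otimes S^2 B$ and $\Gr_F^{-1} V \otimes \Lambda^2 S^2 B$ map isomorphically under the degree-one differential (once the relevant scalars are nonzero), so that its kernel is exactly the remaining $\det B$-summand, which is the image of the previous map.

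The one non-formal input, and the step I expect to be the main obstacle, is the collection of nonvanishing assertions used above: that $\nablabar$ is nonzero on $\Gr_F^{-1} V$, that the differentials out of $\Gr_F^0 V$ and out of $\det B$ are nonzero, that $\Gr_F^0 V \otimes S^2 B \to \Gr_F^{-1} V \otimes \Lambda^2 S^2 B$ is nonzero on each of its two matching isotypic components, and that $\det B \otimes S^2 B \to \Gr_F^0 V \otimes \Lambda^2 S^2 B$ is nonzero. Each is a finite computation: fix a highest-weight vector in the relevant source irreducible and evaluate the differential --- an explicit wedge against $\mu$, with $\mu(b)(b') = b \cdot b'$ --- in coordinates, or equivalently on the plane-quartic model, where $B = H^0(\P^2, \cO(1))$, $S^2 B = H^0(\P^2, \cO(2))$ and $\mu$ is the Koszul differential of the canonical ring. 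These are exactly the computations of Nori \cite[pp.~371--372]{nori} and Collino--Pirola \cite[Lem.~4.2.3]{collino-pirola}, which I would invoke once the representation-theoretic reductions above are in place; together with them the proposition follows.
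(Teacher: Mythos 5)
Your proposal is correct and follows essentially the same route as the paper: decompose each $\Gr_F^p(V\otimes\Lambda^\bdot S^2 B)$ into $\GL(B)$-irreducibles, use Schur's lemma and multiplicity-one to read off kernels and images once the differentials are known to be non-zero on the matching isotypic pieces, and defer those non-vanishing checks to explicit computation (Nori, Collino--Pirola), which is exactly what the paper's sketch does. Your treatment of the degree-one differential of $\Gr_F^0$ via the wedge map with kernel $S^2S^2B\otimes\det A$ is only a cosmetic variant of the paper's isotypic-component argument.
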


\begin{proof}
The complexes $\Gr_F^p (V \otimes \Lambda^\bdot S^2 B)$ are:
$$
\begin{tikzcd}[row sep=1mm]
\Gr_F^2: & 0 \ar[r] & \Lambda^3 B \otimes S^2 B \ar[r] & \displaystyle{\frac{ A \otimes \Lambda^2 B}{\theta\cdot B}}\otimes \Lambda^2 S^2 B
\\
\Gr_F^1: & \Lambda^3 B \ar[r] & \displaystyle{\frac{ A \otimes \Lambda^2 B}{\theta\cdot B}}\otimes S^2 B \ar[r] & \displaystyle{\frac{\Lambda^2 A \otimes B}{\theta\cdot A}}  \otimes \Lambda^2 S^2 B
\\
\Gr_F^0: & \displaystyle{\frac{ A \otimes \Lambda^2 B}{\theta\cdot B}} \ar[r] & \displaystyle{\frac{\Lambda^2 A \otimes B}{\theta\cdot A}}  \otimes S^2 B \ar[r] &  \Lambda^3 A \otimes \Lambda^2 S^2 B
\end{tikzcd}
$$
Each is a complex of $\GL(B)$ modules. The vanishing of the homology of the first two rows in degrees 0 and 1 is sketched by Nori \cite[\S7]{nori}. Here we sketch an elementary proof. The exactness of the first row $\Gr_F^2$ follows from the fact that $S^2 B\otimes \det B$ is irreducible and the differential is non-zero and therefore injective. Exactness of the second row $\Gr_F^1$ follows (using Lemma~\ref{lem:rep}) from the fact that
$$
\displaystyle{\frac{ A \otimes \Lambda^2 B}{\theta\cdot B}}\otimes S^2 B \cong \End (S^2 B) \otimes \det B
$$
which has 3 irreducible factors. The differential takes $\Lambda^3 B$ to $\det B\otimes \id_{S^2B}$. The other two factors map injectively into the degree 2 term. So we focus on the complex $\Gr_F^0$.

Lemma~\ref{lem:rep} implies that the degree 0 term is $S^2 A \otimes \det B$ and that the degree 1 term is
\begin{align*}
S^2 B \otimes S^2 B \otimes \det A &\cong (S^2 S^2 B\otimes \det A) \oplus (\Lambda^2 S^2 B \otimes \det A)
\cr
&\cong (S^4 B \otimes \det A) \oplus (S^2 A \otimes \det B) \oplus (\Lambda^2 S^2 B \otimes \det A).
\end{align*}
Since the degree 0 term is irreducible and the differential is non-zero, we see that the differential is injective and that the group of 1-coboundaries is $S^2 A\otimes \det B$. Since the degree 2 term is irreducible and  not isomorphic to either of the first 2 components of the degree 1 term, and since second differential is non-zero, it follows that the group of 1-cocycles is $(S^4 B\otimes \det A) \oplus (S^2 A \otimes \det B)$ and that $H^1(\Gr_F^0)$ is $S^4 B \otimes \det A$.
\end{proof}

Since each $\Gr_F^p(\cV \otimes \Omega_{\cM_3}^\bdot)$ is a complex of vector bundles over the complement of the hyperelliptic locus, the previous result implies:

\begin{corollary}
When $g=3$, the differential $\nabla : F^0 \cV \to \cV \otimes \Omega^1_{\cM_3}$ is injective on the complement of the hyperelliptic locus. In addition, the homology sheaves $\cH^1(\Gr^p_F(\cV\otimes \Omega_{\cM_3}^\bdot))$ vanish on the complement of the hyperelliptic locus when $p>0$.
\end{corollary}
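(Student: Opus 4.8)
The plan is to transfer Proposition~\ref{prop:coho} from $\cA_3$ to $\cM_3$ using the fact, recalled in Section~\ref{sec:period-map}, that the period map $\cM_3 \to \cA_3$ is a local biholomorphism on the complement $U$ of the hyperelliptic locus, where the multiplication map $S^2 H^0(\Omega_C) \to H^0(\Omega_C^{\otimes 2})$ is an isomorphism. Consequently, at each $[C]\in U$ one has $T^\vee_{[C]}\cM_3 \cong S^2 B$, and the complex of vector bundles $\Gr_F^p(\cV\otimes\Omega_{\cM_3}^\bdot)$ has fiber at $[C]$ isomorphic, as a complex, to the $\GL(B)$-module complex $\Gr_F^p(V\otimes\Lambda^\bdot S^2 B)$ of Proposition~\ref{prop:coho}; this is exactly the identification of the complex (\ref{eqn:ceresa_cplex}) with (\ref{eqn:jacobian_cplex}) noted at the start of this section. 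In particular, the $\cO_{\cM_3}$-linear differentials $\nablabar$ of $\Gr_F^p(\cV\otimes\Omega_{\cM_3}^\bdot)$ are fiberwise injective, and have the cohomology computed in Proposition~\ref{prop:coho}, at every point of $U$.

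For the injectivity of $\nabla : F^0\cV \to \cV\otimes\Omega^1_{\cM_3}$ on $U$, I would argue via the Hodge filtration $F^0\cV \supseteq F^1\cV \supseteq F^2\cV = 0$, whose graded quotients are $\Gr_F^0\cV$ and $\Gr_F^1\cV = \Lambda^3 B$. Suppose $s$ is a nonzero flat section of $F^0\cV$ over a connected open $W\subseteq U$, and let $p\in\{0,1\}$ be maximal with $s\in F^p\cV$ over $W$; then the image $\bar s$ of $s$ in $\Gr_F^p\cV$ is not identically zero, so $\bar s([C])\neq 0$ for some $[C]\in W$. Since $\nabla s = 0$ and Griffiths transversality gives $\nabla(F^{p+1}\cV)\subseteq F^p\cV\otimes\Omega^1_{\cM_3}$, the composite of $\nabla$ with the projection $F^{p-1}\cV\otimes\Omega^1_{\cM_3} \to \Gr_F^{p-1}\cV\otimes\Omega^1_{\cM_3}$ factors through the $\cO$-linear map $\nablabar : \Gr_F^p\cV \to \Gr_F^{p-1}\cV\otimes\Omega^1_{\cM_3}$ and carries $s$ to $\nablabar\bar s = 0$. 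But $\nablabar$ is fiberwise injective on $\Gr_F^p\cV$ at $[C]\in U$ by Proposition~\ref{prop:coho}(1), applied with $p=0$ and $p=1$, contradicting $\bar s([C])\neq 0$. Hence $\ker(\nabla|_{F^0\cV}) = 0$ over $U$.

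For the vanishing of $\cH^1(\Gr_F^p(\cV\otimes\Omega_{\cM_3}^\bdot))$ for $p>0$ on $U$, note that these are the homology sheaves of complexes of vector bundles $E_0 \to E_1 \to E_2$ with differentials $d_0,d_1$. The locus where $\rank(d_0)+\rank(d_1)=\rank E_1$ is open, since both ranks are lower semicontinuous and their sum is bounded above by $\rank E_1$ (as $\im d_0\subseteq\ker d_1$); on this locus the two ranks are therefore locally constant, so $\im d_0$ and $\ker d_1$ are subbundles agreeing fiberwise, and $\cH^1 = 0$ there. Proposition~\ref{prop:coho}(2) shows this open locus contains $U$, which gives the claim.

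The representation-theoretic content is entirely contained in Proposition~\ref{prop:coho}, so the only thing requiring care is the passage from the fiberwise statements to statements about sheaves over $U$ --- the reduction of the injectivity of the connection $\nabla$ to the injectivity of its $\cO_{\cM_3}$-linear graded pieces, and the semicontinuity argument for the homology sheaves --- but both are routine.
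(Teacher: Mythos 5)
Your proposal is correct and follows essentially the paper's own route: the paper deduces the corollary immediately from Proposition~\ref{prop:coho}, using the fact that away from the hyperelliptic locus the period map identifies $\Omega^1_{\cM_3}$ with $S^2\sB$, so that each $\Gr_F^p(\cV\otimes\Omega^\bdot_{\cM_3})$ is a complex of vector bundles whose fibers are the representation-theoretic complexes computed there. Your filtration argument for the injectivity of $\nabla$ on $F^0\cV$ and the constant-rank argument for the vanishing of $\cH^1$ merely spell out the routine sheaf-level steps the paper leaves implicit.
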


Since $\P(H^0(\Omega^1_C))^\vee = \P(A)$, the Green--Griffiths invariant $\deltabar_C(\nu)$, if it is non-zero, defines a plane quartic in $\P(A)$.

\begin{theorem}[Collino--Pirola {\cite[Thm.~4.2.4]{collino-pirola}}]
\label{thm:collino-pirola}
If $C$ is a non-hyperelliptic curve of genus 3, then $\deltabar_C(\nu)$ is a non-zero quartic polynomial that defines the canonical image of $C$ in $\P(A)$.
\end{theorem}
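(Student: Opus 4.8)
The plan is to compute $\deltabar_C(\nu)$ explicitly. By part (3) of Proposition~\ref{prop:coho} it lies in $H^1(\Gr_F^0(V\otimes\Lambda^\bdot S^2 B))\cong S^4 B\otimes\det A$, and since $A\cong B^\vee$ the space $S^4 B$ is canonically the space of quartic forms on $\P(A)$. After trivializing the line $\det A$, the invariant $\deltabar_C(\nu)$ is therefore a quartic form on $\P(A)$, and the theorem amounts to the two statements that this form is nonzero and that its zero locus is the canonical image of $C$ --- equivalently, that it is a nonzero scalar multiple of the equation of the canonical curve.

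First I would produce an explicit $1$-cocycle representing $\nablabar_C\nu$ in $\frac{\Lambda^2 A\otimes B}{\theta\cdot A}\otimes H^0(\Omega^{\otimes 2}_C)$. The input here is the Hodge-theoretic description of the normal function of the Ceresa cycle: the relevant extension of variations of MHS, its Hodge lift $\ee_F$, and hence the covariant derivative $\nabla\ee_F$ that computes $\delta(\nu)$, are governed by the cup product on $C$ --- concretely, by Harris's harmonic volume together with the multiplication (Petri) map $m\colon S^2 H^0(\Omega^1_C)\to H^0(\Omega^{\otimes 2}_C)$ and the polarization $\theta$; see the descriptions of the Ceresa extension due to Harris \cite{harris}, Pulte \cite{pulte} and Hain. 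Differentiating $\ee_F$ via Griffiths transversality and reducing modulo $\nabla F^0\cV$ and the Hodge filtration should then produce a closed formula for the cocycle as a natural expression built out of $m$ and the duality $A\cong B^\vee$.

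Next comes the $\GL(B)$-equivariant bookkeeping. In non-hyperelliptic genus $3$ the map $m$ is an isomorphism by Max Noether's theorem (both sides have dimension $6$), so one may use it to identify $H^0(\Omega^{\otimes 2}_C)$ with $S^2 B$ and view the cocycle inside $S^2 B\otimes S^2 B\otimes\det A$, after rewriting $\frac{\Lambda^2 A\otimes B}{\theta\cdot A}\cong S^2 B\otimes\det A$ by Lemma~\ref{lem:rep}. The cocycle condition kills the $\Lambda^2 S^2 B$-part, and projecting the rest onto the summand $S^4 B\otimes\det A$ of Proposition~\ref{prop:coho}(3) --- the one complementary to the space $S^2 A\otimes\det B$ of coboundaries --- one reads off a quartic form. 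That this quartic is nonzero and equals, up to a nonzero scalar, the canonical equation should follow from the non-degeneracy built into Noether's theorem (the kernel and cokernel of $m$ are as small as possible) together with the irreducibility of the $\GL(B)$-modules involved, which leaves no room for any other quartic.

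The hard part is the first step --- pinning down the explicit cocycle with its constants --- and then verifying in the equivariant step that the $S^4 B$-component is exactly the canonical quartic rather than some other $\Aut C$-invariant quartic, which for a generic curve is automatic but which one wants uniformly. Here Proposition~\ref{prop:symmetry} serves as a useful guide and consistency check: at a curve such as the Klein quartic, where $\Aut C$ acts irreducibly on $B$ and the space of invariant quartics is one-dimensional, invariance alone forces $\deltabar_C(\nu)$ to be a multiple of the canonical equation, so at such points only the non-vanishing of the multiple remains. An alternative route, the one carried out in Section~\ref{sec:proof-cp}, is to recognize $\deltabar(\nu)$ as a Teichm\"uller modular form valued in the automorphic bundle with fiber $S^4 B\otimes\det A$, identify the essentially one-dimensional space of candidates with the span of $\chi_{4,0,-1}$, and use its known non-vanishing restriction $\chi_{4,0,8}$ to the hyperelliptic locus to conclude; that approach also yields Theorem~\ref{thm:gg-genus3}.
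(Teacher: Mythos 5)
Your main route does not close at its central step. The decisive claim --- that the component of $\nablabar_C\nu$ in $S^4 B\otimes\det A$ is (a nonzero multiple of) the equation of the canonical curve --- is exactly the content of the theorem, and the justification you offer (non-degeneracy of the multiplication map plus ``irreducibility of the $\GL(B)$-modules involved, which leaves no room for any other quartic'') is not an argument. Irreducibility of $S^4B\otimes\det A$ constrains the shape of the target, not the value of the invariant at a given curve: at a fixed non-hyperelliptic $C$ the space of quartics on $\P(A)$ is $15$-dimensional, and naturality only says that $C\mapsto \deltabar_C(\nu)$ is \emph{some} concomitant of ternary quartics with values in quartics, of which there are many besides the form $f$ itself. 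Singling out $f$ requires either an honest computation or a rigidity statement. In the paper that rigidity comes from modular forms: Lemma~\ref{lem:technical} shows that $\chi_9\deltatilde(\nu)$ extends to a Siegel \emph{cusp} form of weight $(4,0,8)$; Ta\"ibi's dimension count makes that space one-dimensional, spanned by $\chi_{4,0,8}$; the non-vanishing of the multiple comes from the non-triviality of the class of $\nu$ in $H^1(\cM_3,\bV)$ (twice the Johnson homomorphism) together with Propositions~\ref{prop:green} and~\ref{prop:coho}; and the identification of $\chi_{4,0,-1}$ at a non-hyperelliptic curve with the defining quartic is \cite[Prop.~10.1]{cfg}. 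Your symmetry observation at the Klein quartic (via Proposition~\ref{prop:symmetry}) does force proportionality to the canonical equation there, but only at curves whose automorphism group admits a one-dimensional space of invariant quartics, so it cannot give the statement uniformly.

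Your first step --- extracting an explicit cocycle, with constants, from the Harris--Pulte description of the Ceresa extension --- is flagged by you as ``the hard part'' and is left undone; carried out in full it would essentially reproduce Collino--Pirola's original computation rather than yield a new proof within this paper's framework. The alternative you mention at the end is in fact the paper's proof (Theorem~\ref{thm:gg-genus3-final} plus \cite[Prop.~10.1]{cfg}), but in your write-up it appears as a fallback rather than an executed argument, so as it stands the proposal has a genuine gap at the step where the quartic is identified.
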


We will give a new proof of it in Section~\ref{sec:proof-cp} using Teichm\"uller modular forms.

\subsection{Linear algebra}
\label{sec:lin-alg}

Suppose that $C$ is not hyperelliptic of genus 3. We will regard $S^2 S^2 B \otimes \det A$ as the space of symmetric bilinear forms $S^2 A \otimes S^2 A \to \det A$. In view of Proposition~\ref{prop:coho}(\ref{item:cocycles}), we can thus regard the derivative $\nablabar_C \nu$ as a symmetric bilinear form
$$
D_C : S^2 A \otimes S^2 A \to \det A.
$$
It decomposes as the sum
$$
D_C = Q_C + R_C
$$
of two symmetric forms, where $Q_C \in S^4 B \otimes \det A$, a natural representative of $\deltabar_C(\nu)$, and $R_C \in S^2 A \otimes \det B$, which is a coboundary.

Fix a volume form $\vol_A \in \det A$ and let $\vol_B \in \det B$ be the dual volume form. Identify $\det A$ and $\det B$ with $\C$ via the isomorphisms
$$
\C \to \det A,\quad t \mapsto t\, \vol_A \text{ and } \C \to \det B, \quad t \mapsto t\, \vol_B.
$$
The pairing $Q_C : S^2 A \otimes S^2 A \to \C$ is (up to a non-zero multiple) given by the formula
$$
Q_C(u,v) = f(uv)/24
$$
where $f \in S^4 B$ is a generator of the ideal of functions that vanish on the canonical image of $C$ in $\P(A)$ and where $uv\in S^4 A$ is the product of $u,v\in S^2 A$.

The pairing $R_C$ is the composite
$$
S^2 A \to S^2 \Lambda^2 B \to (S^2 B)^{\otimes 2} \to \Hom(S^2 A \otimes S^2 A,\C)
$$
where the first map is induced by the isomorphism
$$
A \to \Lambda^2 B,\quad a \mapsto a \intprod \vol
$$
and the second by
$$
(b_1\wedge b_2) \cdot (b_1'\wedge b_2') \mapsto
b_1b_1'\otimes b_2b_2' + b_2b_2'\otimes b_1b_1' - b_1b_2' \otimes b_2b_1' - b_2b_1' \otimes b_1b_2'.
$$

We now give explicit formulas (up to multiplication by a non-zero scalar) of them. Fix a basis $\ee_0,\ee_1,\ee_2$ of $A$ such that $\vol_A = \ee_0 \wedge \ee_1 \wedge \ee_2$. Let $x_0,x_1,x_2$ be the dual basis of $B$.

\begin{proposition}
\label{prop:QC}
If the defining equation $f$ of $C$ in $\P(A)$ is
$$
f(x_0,x_1,x_2) = \sum_j a_j x_j^4 + 4\sum_{j\neq k} b_{jk} x_j x_k^3 + 6\sum_{j<k} c_{jk} x_j^2 x_k^2 + 12\sum_j\sum_{k,\ell\neq j} d_j x_j^2x_k x_\ell,
$$
then the matrix of $Q_C$ with respect to the basis
$$
\ee_0^2,\ \ee_1^2,\ \ee_2^2,\ \ee_0\ee_1,\ \ee_0\ee_2,\ \ee_1 \ee_2
$$
of $S^2 A$ is a non-zero multiple of
\begin{equation}
\label{eqn:GC}
\begin{pmatrix}
a_0 & c_{01} & c_{02}  & b_{10}  & b_{20}  & d_0  
\\
c_{01}  & a_1  & c_{12}  & b_{01}  & d_1  & b_{21}  
\\
c_{02}  & c_{12}  & a_2  & d_2  & b_{02}  & b_{12}  
\\
b_{10}  & b_{01}  & d_2  & c_{01}  & d_0  & d_1  
\\
b_{20}  & d_1  & b_{02}  & d_0  & c_{02}  & d_2  
\\
d_0  & b_{21}  & b_{12}  & d_1  & d_2  & c_{12}  
\end{pmatrix}
\end{equation}
\end{proposition}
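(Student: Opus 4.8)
The plan is to unwind the formula $Q_C(u,v)=f(uv)/24$ recorded just above and to evaluate the resulting symmetric form on all pairs of basis vectors of $S^2A$. Since the proposition asserts the matrix only up to a nonzero scalar, overall normalisations are irrelevant; what matters is to fix one realisation of the pairing $S^4 B\otimes S^4 A\to\C$ underlying ``$f(uv)$''. I will take it to be the apolarity pairing: identify $\ee_i\in A$ with $\partial/\partial x_i$ acting on $\Sym B=\C[x_0,x_1,x_2]$, so that $f(\ee^\beta)$ is the value obtained by applying the constant-coefficient operator $(\partial/\partial x)^\beta$ to $f$. For $|\alpha|=|\beta|=4$ this operator annihilates the monomial $x^\alpha$ unless $\alpha=\beta$, in which case it returns $\alpha!$; hence $f(\ee^\beta)=\beta!\,c_\beta$, where $c_\beta$ is the coefficient of $x^\beta$ in $f$.

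Next I read off the coefficients of $f$ from the way it is displayed in the statement. The prefactors $1,4,6,12$ are exactly the multinomial coefficients $\binom{4}{4,0,0},\binom{4}{3,1,0},\binom{4}{2,2,0},\binom{4}{2,1,1}$, so the displayed expression is $f=\sum_{|\alpha|=4}\binom 4\alpha A_\alpha\,x^\alpha$, where $A_\alpha$ equals $a_j$ on $x_j^4$, $b_{jk}$ on $x_j x_k^3$ (the index with exponent $1$ listed first, that with exponent $3$ second), $c_{jk}$ on $x_j^2x_k^2$, and $d_j$ on $x_j^2x_kx_\ell$. Hence $f(\ee^\beta)=\beta!\binom 4\beta A_\beta=4!\,A_\beta=24\,A_\beta$, i.e. $f(\ee^\beta)/24=A_\beta$: after dividing by $24$, the apolarity pairing simply returns the ``symmetric'' coefficient $A_\beta$. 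Confirming that the mixed terms of $f$ really carry the multinomial normalisation is a matter of tracking the index conventions in the displayed formula, and is the only mildly fiddly point.

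The last step is pure bookkeeping. For $u=\ee^\mu$ and $v=\ee^\nu$ with $|\mu|=|\nu|=2$, the product $uv$ in $\Sym A$ is the monomial $\ee^{\mu+\nu}$, so $Q_C(u,v)$ is (up to the fixed nonzero scalar) the coefficient $A_{\mu+\nu}$ of $x^{\mu+\nu}$ in $f$ in the normalisation above. Running over the $21$ unordered pairs from $\ee_0^2,\ee_1^2,\ee_2^2,\ee_0\ee_1,\ee_0\ee_2,\ee_1\ee_2$ and recording $\mu+\nu$ each time --- for instance $(\ee_0^2,\ee_0\ee_1)\mapsto x_0^3x_1$ gives $b_{10}$, $(\ee_1^2,\ee_0\ee_1)\mapsto x_0x_1^3$ gives $b_{01}$, $(\ee_2^2,\ee_0\ee_2)\mapsto x_0x_2^3$ gives $b_{02}$, both $(\ee_0^2,\ee_1\ee_2)\mapsto x_0^2x_1x_2$ and $(\ee_0\ee_1,\ee_0\ee_2)\mapsto x_0^2x_1x_2$ give $d_0$, $(\ee_1^2,\ee_0\ee_2)\mapsto x_0x_1^2x_2$ gives $d_1$, $(\ee_0\ee_2,\ee_1\ee_2)\mapsto x_0x_1x_2^2$ gives $d_2$, and so on --- one obtains precisely the symmetric matrix (\ref{eqn:GC}). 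The recipe is visibly consistent: $x_i^2x_j^2$ arises both from $\{\ee_i^2,\ee_j^2\}$ and from $\{\ee_i\ee_j,\ee_i\ee_j\}$, and both return $c_{ij}$; likewise $x_i^2x_jx_k$ arises from $\{\ee_i^2,\ee_j\ee_k\}$ and from $\{\ee_i\ee_j,\ee_i\ee_k\}$, and both return $d_i$. There is no genuine obstacle; the only place a reader could stumble is the normalisation of the pairing on $S^4B\otimes S^4A$ together with the multinomial normalisation of the coefficients of $f$, which jointly account for the factor $1/24$ and for why the matrix entries are the bare coefficients $a_j,b_{jk},c_{jk},d_j$ rather than these times binomials.
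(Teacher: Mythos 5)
Your proof is correct and follows essentially the same route as the paper: the apolarity pairing you use (differentiation of $f$ by $\ee^\beta$) coincides with the paper's symmetrized pairing $\langle x_{j_1}\cdots x_{j_4},\ee_{k_1}\cdots\ee_{k_4}\rangle=\sum_{\sigma\in\bS_4}\prod_i\langle x_{j_i},\ee_{k_{\sigma(i)}}\rangle$, and both arguments then evaluate $Q_C(u,v)=f(uv)/24$ on basis monomials, the factor $24$ cancelling against the multinomial normalisation of the coefficients of $f$. Your multinomial bookkeeping is just a slightly more systematic packaging of the paper's representative computations plus index permutation.
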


\begin{proof}[Sketch of proof]
The $\GL(A)$-invariant dual paring $S^4 B \otimes S^4 A \to \C$ is unique up to a constant. We choose the normalization
$$
\langle x_{j_1}x_{j_2}x_{j_3}x_{j_4}, \ee_{k_1}\ee_{k_2}\ee_{k_3}\ee_{k_4} \rangle
= \sum_{\sigma \in \bS_4} \prod_{i=1}^4 \langle x_{j_i}, \ee_{k_{\sigma(i)}} \rangle,
$$
where $\bS_4$ is the symmetric group on 4 letters. So, for example,
$$
f(\ee_0^4) = 24\, a_0,\ f(\ee_0^3 \ee_1) = 4\cdot 6\, b_{10},\ f(\ee_0^2\ee_1^2) = 6 \cdot 4\, c_{01},\ f(\ee_0^2\ee_1\ee_2) = 12\cdot 2\, d_0.
$$
Thus
\begin{multline*}
Q_C(\ee_0^2,\ee_0^2) = f(\ee_0^4)/24 = a_0,\ Q_C(\ee_0^2,\ee_0\ee_1) = f(\ee_0^3 \ee_1)/24 = b_{10}, \cr
Q_C(\ee_0^2,\ee_1^2) = Q_C(\ee_0\ee_1,\ee_0\ee_1) = f(\ee_0^2\ee_1^2)/24 = c_{01}, \cr
Q_C(\ee_0^2,\ee_1\ee_2) = Q_C(\ee_0\ee_1,\ee_0\ee_2) = f(\ee_0^2\ee_1\ee_2)/24 = d_0. 
\end{multline*}
The remaining entries are obtained by permuting the indices.
\end{proof}

\begin{remark}
For the sake of completeness, we give a formula for the bilinear form $R_C$. If
$$
h = \sum_j p_j \ee_j^2 + 2\sum_{j<k} q_{jk} \ee_j \ee_k \in S^2 A
$$
is the projection of $\nablabar_C \nu$ onto $S^2 A\otimes \det B$, then the matrix of $R_C$ with respect to the basis
$$
\ee_0^2,\ \ee_1^2,\ \ee_2^2,\ \ee_0\ee_1,\ \ee_0\ee_2,\ \ee_1 \ee_2
$$
of $S^2 A$ is a non-zero multiple of
$$
\begin{pmatrix}
0 & p_2 & p_1 & 0 & 0 & -q_{12}
\\
p_2 & 0 & p_0 & 0 & -q_{02} & 0
\\
p_1 & p_0 & 0 & -q_{01} & 0 & 0
\\
0 & 0 & -q_{01} & -p_2 & q_{12} & q_{02}
\\
0 & -q_{02} & 0 & q_{12} & -p_1 & q_{01}
\\
-q_{12} & 0 & 0 & q_{02} & q_{01} & -p_0
\end{pmatrix}
$$

To prove this, one easily checks that the map $S^2 A \to (S^2B)^{\otimes 2}$ satisfies
\begin{align*}
\ee_0^2 &\mapsto x_1^2 \otimes x_2^2 + x_2^2 \otimes x_1^2 - 2 x_1x_2 \otimes x_1 x_2 \cr
\ee_0\ee_1 &\mapsto  x_0 x_2 \otimes x_1 x_2 + x_1 x_2 \otimes x_0 x_2 - x_0 x_1 \otimes x_2^2 - x_2^2 \otimes x_0 x_1.
\end{align*}
The formulas for the images of the other basis vectors is obtained by cyclically permuting the indices.
\end{remark}

\subsection{The Klein quartic}

Here we prove Theorem~\ref{thm:max-rk} in genus 3 by checking that $\nablabar\nu$ has maximal rank at the moduli point of the Klein quartic $C$. We do this by using the symmetries of $C$ to show that the tensor $R_C$ vanishes. 

The Klein curve is the projective completion $X(7)$ of the modular curve $Y(7) = \G(7)\bs \h$, where $\G(7)$ is the full level 7 subgroup of $\SL_2(\Z)$. It has genus 3 and automorphism group $\PSL_2(\bF_7)$, a simple group of order 168. Its canonical image is the plane quartic
$$
x^3y+y^3z+z^3x=0.
$$
The corresponding matrix (\ref{eqn:GC}) has rank 6 and determinant $-729 = -3^6$. To show that $\nablabar_C(\nu)$ has maximal rank it suffices to show that $R_C$ vanishes.

\begin{proposition}
If $C$ is the Klein quartic, then $R_C = 0$. Consequently, $\nablabar \nu$ has maximal rank at $[C] \in \cM_3$.
\end{proposition}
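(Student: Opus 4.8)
The plan is to use the representation-theoretic rigidity of $R_C$ as a tensor in $S^2 A \otimes \det B$ together with the large automorphism group of the Klein quartic. By Proposition~\ref{prop:symmetry}(2), the full tensor $\nablabar_C \nu = D_C = Q_C + R_C$ is fixed by $G_C \cong \Aut C \cong \PSL_2(\bF_7)$. Since the decomposition $D_C = Q_C + R_C$ into components lying in the non-isomorphic irreducible $\GL(B)$-summands $S^4 B \otimes \det A$ and $S^2 A \otimes \det B$ is canonical, each summand $Q_C$ and $R_C$ is separately $\Aut C$-invariant. So the first step is to reduce the problem to showing that the space of $\PSL_2(\bF_7)$-invariants in $S^2 A \otimes \det B$ vanishes, i.e. $(S^2 A)^{\Aut C} \otimes \det B = 0$ as $\det B$ is one-dimensional and we may check whether $\Aut C$ acts trivially on it.

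\textbf{Main steps.} First I would record the action of $\Aut C = \PSL_2(\bF_7)$ on $A = H^{0,1}(C) \cong H^0(\Omega^1_C)^\vee$, equivalently on $B = H^0(\Omega^1_C)$: this is the standard 3-dimensional representation coming from the canonical embedding $C \hookrightarrow \P(A) = \P^2$ with defining equation $x^3 y + y^3 z + z^3 x = 0$, realized by explicit $3 \times 3$ matrices generating $\PSL_2(\bF_7)$ (for instance the order-7 diagonal matrix $\diag(\zeta, \zeta^4, \zeta^2)$ with $\zeta = e^{2\pi i/7}$ together with the order-3 cyclic permutation of coordinates and the extra order-2 involution described, e.g., in Elkies's account of the Klein quartic). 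Second, I would decompose $S^2 A$ as an $\Aut C$-representation. The group $\PSL_2(\bF_7)$ has a 6-dimensional irreducible representation, and $S^2 A$ (being 6-dimensional) is either irreducible or decomposes into smaller pieces; in either case I would check that it contains no copy of the trivial representation — concretely, by computing $\sum_{g} \chi_{S^2 A}(g)/|\Aut C| = \tfrac{1}{168}\sum_g \chi_A(g)^2 \cdot \tfrac{1}{2}(1 + \chi_A(g^2)/\chi_A(g)^2) \cdot (\dots)$, or more simply via the character identity $\chi_{S^2 A}(g) = \tfrac12(\chi_A(g)^2 + \chi_A(g^2))$, and confirming $\langle \chi_{S^2 A}, 1 \rangle = 0$. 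Since $\det B$ is acted on through $\det(\rho)$ on the $3$-dimensional $B$, and $\SL_3$ acts trivially on $\det$, the action of $\PSL_2(\bF_7)$ on $\det B$ is trivial (the matrices above have determinant $1$); hence $(S^2 A \otimes \det B)^{\Aut C} = (S^2 A)^{\Aut C} = 0$. Therefore $R_C = 0$. Finally, since the matrix~\eqref{eqn:GC} for $Q_C$ at the Klein quartic is the matrix of the form $x^3y + y^3z + z^3x$, which has rank $6$ and determinant $-3^6 \neq 0$, we get $D_C = Q_C$ has rank $6 = \dim S^2 A$, hence $\nablabar_C \nu$ has rank $6 = \dim \cM_3$, i.e.\ maximal rank.

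\textbf{Expected main obstacle.} The representation theory is routine but must be done carefully: the one genuine subtlety is confirming that $S^2 A$ contains no trivial subrepresentation for $\PSL_2(\bF_7)$ acting via the canonical representation. This is a short character computation once the character table of $\PSL_2(\bF_7)$ and the eigenvalues of the generating matrices are in hand; alternatively one can argue directly that an $\Aut C$-invariant quadric in $\P(A)$ would be $\PSL_2(\bF_7)$-stable, and since the Klein quartic admits no invariant conic (the invariant ring of $\PSL_2(\bF_7)$ on $\C[x,y,z]$ is classically known to be generated in degrees $4, 6, 14, 21$, with no invariant of degree $2$), no such quadric exists, so $(S^2 A)^{\Aut C} = 0$. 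The rest — transcribing the defining equation into the matrix~\eqref{eqn:GC} and checking its determinant is $-729$ — is the routine verification already stated in the text preceding the proposition.
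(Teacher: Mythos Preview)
Your proposal is correct and follows essentially the same approach as the paper: both arguments use Proposition~\ref{prop:symmetry} to reduce to showing $(S^2 A \otimes \det B)^{\Aut C} = 0$, observe that $\det B$ is a trivial $\PSL_2(\bF_7)$-module, and then verify that $S^2 A$ has no $\PSL_2(\bF_7)$-invariants. The paper dispatches this last step by noting that $S^2 A$ is the unique $6$-dimensional irreducible of $\PSL_2(\bF_7)$; your suggested alternatives (character computation, or the classical fact that the Klein invariant ring has no generator in degree~$2$) are equally valid routes to the same conclusion.
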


\begin{proof}
Set $G=\PSL_2(\bF_7)$. In view of Proposition~\ref{prop:symmetry}, it suffices to show that $S^2 A\otimes \det B$ does not contain the trivial representation of $G$. The irreducible complex representations of $G$ have dimensions 1, 3, 3, 6, 7, 8. The two 3-dimensional representations are conjugate. Since $\SL_2(\Z)$ has no cusp forms of weight 2, $B = H^0(\Omega^1_C)$ does not contain the trivial representation. It is thus one of the two 3-dimensional irreducible representations and $A$ is its conjugate. Since the only 1-dimensional $G$-module is the trivial representation,  $\det A$ and $\det B$ are trivial.

Both $S^2 A$ and $S^2 B$ are isomorphic to the unique 6-dimensional irreducible representation. It follows that $S^2 A\otimes \det B$ is irreducible and thus contains no copy of the trivial representation.
\end{proof}

\begin{remark}
One can easily check that $S^4 B$ contains one copy of the trivial representation. It is spanned by a defining equation of $C$ in $\P(A)$.
\end{remark}

Since $\nablabar \nu$ is a holomorphic section of $\cV\otimes \Omega_{\cM_3}^1$, we conclude that $\nablabar \nu$ has maximal rank on an open dense subset of $\cM_3$.

\begin{proposition}
The derivative $\nablabar \nu$ has maximal rank on a dense open subset of $\cM_3$.
\end{proposition}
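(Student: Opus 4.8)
The plan is to deduce this from the single fact, just established, that $\nablabar\nu$ attains maximal rank at the moduli point $x_0$ of the Klein quartic, together with the density statement of Proposition~\ref{prop:density}. First I would work on Torelli space $\cT_3$ (or, if one prefers, the orbifold $\cM_3$), a connected complex manifold of dimension $3g-3 = 6$, and record two bookkeeping points: by Proposition~\ref{prop:derivatives} one has $\rank_s\nu = \rank\nablabar_s\nu$ for every $s$, and both $T_s\cT_3$ and the fiber $\Gr_F^{-1}V_s = (\Lambda^2 A\otimes B)/(\theta\cdot A)$ are $6$-dimensional, so ``$\nablabar_s\nu$ has maximal rank'' is synonymous with ``$\rank_s\nu = \dim\cT_3$'' and with invertibility of $\nablabar_s\nu$. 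The preceding proposition then supplies the point $x_0$ at which this holds, the rank being well defined there because it is $\Aut C$-invariant by Proposition~\ref{prop:symmetry}.

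Next I would invoke that $\nabla\nu_\R$, hence by Corollary~\ref{cor:nablabar} its $(1,0)$-component $\nablabar\nu$, is a \emph{real analytic} section over all of $\cT_3$ (this is part of the discussion around Proposition~\ref{prop:derivatives}). Consequently the bad locus
$$
\Sigma = \{s\in\cT_3 : \rank_s\nu < \dim\cT_3\}
$$
is the common zero set of the $6\times 6$ minors of $\nablabar_s\nu$ computed in local real analytic frames of $T\cT_3$ and $\Gr_F^{-1}\cV$, so $\Sigma$ is a closed real analytic subvariety. Since $\cT_3$ is connected and $\Sigma$ is \emph{proper} — it omits $x_0$ — it is nowhere dense, so $\cT_3\setminus\Sigma$ is dense open and $\nablabar\nu$ has maximal rank on it; this set is $\Sp_3(\Z)$-invariant and descends to a dense open subset of $\cM_3$. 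This is exactly Proposition~\ref{prop:density} applied with $x_0$ the Klein quartic. (Following the remark immediately before the statement, one may instead observe that over the non-hyperelliptic locus the period map is a local isomorphism and the determinant of $\nablabar_s\nu$ can be read off the holomorphic complex $\Gr_F^0(\cV\otimes\Omega_{\cM_3}^\bdot)$, whence $\Sigma$ is there even a complex analytic hypersurface; but this refinement is not needed, as real analyticity of $\nablabar\nu$ and irreducibility of $\cM_3$ already force $\Sigma$ to be nowhere dense.)

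I do not expect a genuine obstacle here: the substantive input — exhibiting even one point of maximal rank — has already been carried out by using the $\PSL_2(\bF_7)$-symmetry of the Klein quartic to kill the coboundary tensor $R_C$ and then checking that the matrix $(\ref{eqn:GC})$ for $x^3y+y^3z+z^3x=0$ is invertible. What remains is the routine passage from ``maximal rank somewhere'' to ``maximal rank on a dense open set'', and the only things requiring a little care are that one argues on the genuine manifold $\cT_3$ rather than on the stack, that the two ambient bundles have equal rank $6$ so maximal rank coincides with invertibility, and that one uses real analyticity rather than holomorphy of $\nablabar\nu$ (which is what actually makes $\Sigma$ nowhere dense in the irreducible space $\cM_3$).
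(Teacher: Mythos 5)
Your argument is correct and is essentially the paper's: one point of maximal rank (the Klein quartic, from the preceding proposition) plus analyticity of $\nablabar\nu$ and Proposition~\ref{prop:density} force the degeneracy locus to be a proper, hence nowhere dense, analytic subvariety, so the maximal-rank locus is dense open in $\cM_3$. If anything, your appeal to \emph{real} analyticity of $\nablabar\nu$ is the more careful justification, since the paper's one-line remark that $\nablabar\nu$ is holomorphic sits uneasily with its own earlier warning that $\nablabar\nu$ is in general only real analytic; your parenthetical holomorphic refinement should be treated with the same caution, but it is rightly flagged as unnecessary.
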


\section{Geometry of the period map in genus 3}

Here we supplement the discussion of the geometry of the period map in Section~\ref{sec:period-map} by giving a more detailed description of it in genus 3, which is needed in our proof of Theorem~\ref{thm:collino-pirola}. This enables us to show that the Green--Griffiths invariant $\deltabar(\nu)$ is a Teichm\"uller modular form and to show that the derivative of $\nu$ along the hyperelliptic locus is the restriction of the Siegel modular form $\chi_{4,0,8}$ to the hyperelliptic locus.

Denote the closure of the locus in $\h_3$ of jacobians of hyperelliptic curves by $\h_3^\hyp$ and the locus of reducible (as a principally polarized abelian variety) abelian 3-folds by $\h_3^\red$. Since every genus 2 curve is hyperelliptic, every reducible abelian 3-fold is the jacobian of a hyperelliptic curve of compact type. So we have a filtration
$$
\h_3 \supset \h_3^\hyp \supset \h_3^\red.
$$
The image of the genus 3 period mapping
 $\cT_3 \to \h_3$ is $\h_3 - \h_3^\red$.
(This is surely well known. For a proof see \cite[Cor.~7]{hain:A3}.) Each even theta characteristic $\alpha$ determines a theta null $\vartheta_\alpha : \h_3 \to \C$. Their product is the Siegel modular form
$$
\chi_{18} := \prod_{\alpha \text{ even}} \vartheta_\alpha
$$
of weight 18.

\begin{proposition}
The genus 3 Torelli space $\cT_3$ is the analytic subvariety of $(\h_3-\h_3^\red) \times \C$ defined by $y^2 = \chi_{18}(\tau)$.
\end{proposition}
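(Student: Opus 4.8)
The plan is to exhibit an explicit map $\cT_3 \to (\h_3-\h_3^\red)\times\C$ whose image is cut out by $y^2=\chi_{18}(\tau)$ and which is a biholomorphism onto that image. The starting point is the factorization of the period map discussed in Section~\ref{sec:period-map}: the map $\pi:\cT_3\to\h_3$ has image $\h_3-\h_3^\red$, is an \'etale $2{:}1$ cover onto its image away from the hyperelliptic locus, and is an embedding along $\cT_3^\hyp$. So the first task is to identify, functorially in the framed curve $(C;\basis)$, a square root of $\chi_{18}$ evaluated at the framed jacobian; this square root is the extra coordinate $y$. The natural candidate is (a constant multiple of) the product of the {\em odd} theta values' "derivatives" --- more precisely, for a genus $3$ curve the classical fact is that $\chi_{18}$ vanishes to order exactly $2$ along $\h_3^\hyp$ while the relevant Teichm\"uller-type expression vanishes to order $1$, and that the squaring map on these sections reproduces $\chi_{18}$. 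One should cite the standard description of $\chi_{18}$ and its hyperelliptic vanishing (e.g. via Ichikawa's $\chi_9$, a Teichm\"uller modular form of weight $9$ on $\cM_3$ with $\chi_9^2 = \pm\chi_{18}$ up to a scalar); then $y:=\chi_9$ is exactly the needed function, living on $\cT_3$ (or $\cM_3$) but {\em not} descending to $\h_3$ precisely because it changes sign under the deck transformation of $\pi$ over the non-hyperelliptic locus.

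With $y$ in hand, define $\Phi:\cT_3\to(\h_3-\h_3^\red)\times\C$ by $[C;\basis]\mapsto(\pi(C;\basis),\,\chi_9(C;\basis))$. First I would check $\Phi$ lands in the locus $\{y^2=\chi_{18}(\tau)\}$: this is immediate from $\chi_9^2=\chi_{18}\circ\pi$ together with the fact that $\chi_{18}$ on $\h_3$ pulls back to $\chi_{18}$ of the jacobian. Next I would prove $\Phi$ is injective. Two framed curves with the same image have the same framed jacobian, hence by Torelli are isomorphic as framed curves unless they are swapped by the hyperelliptic involution on framings; but that involution negates $\chi_9$ (since it is the deck transformation of $\pi$ over the non-hyperelliptic locus, where $\chi_9\neq0$), so equality of the $y$-coordinates forces the two framed curves to coincide there, and over $\cT_3^\hyp$ injectivity is already known since $\pi$ itself is injective there. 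Then I would prove $\Phi$ is a local biholomorphism: away from $\h_3^\hyp$ the branch $y\neq0$ of $\{y^2=\chi_{18}\}$ maps isomorphically to $\h_3-\h_3^\hyp$ under projection, and $\pi$ is a local iso there, so $\Phi$ is locally an iso; along $\cT_3^\hyp$, where $\chi_{18}$ vanishes to order $2$ in the normal direction and $\chi_9$ to order $1$, the variety $y^2=\chi_{18}$ is smooth of the right dimension and $y$ serves as a transverse normal coordinate, matching $\chi_9$ as a defining function of $\cT_3^\hyp$ in $\cT_3$ --- so again $\Phi$ is a local iso. Finally, surjectivity onto $\{y^2=\chi_{18}(\tau)\}$: the projection of this hypersurface to $\h_3$ surjects onto $\h_3-\h_3^\red$ (the locus where $\chi_{18}\ge$ makes sense as a value, i.e. everywhere), the fiber over a non-hyperelliptic $\tau$ has two points matching the two points of $\pi^{-1}(\tau)$, and over $\h_3^\hyp$ the fiber is a single reduced point matching $\pi^{-1}$; since $\Phi$ is an open immersion with image meeting every fiber, and the target hypersurface is connected (and irreducible, being a double cover of the irreducible $\h_3-\h_3^\red$ branched over the irreducible divisor $\h_3^\hyp$), $\Phi$ is onto.

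The main obstacle, I expect, is the local analysis along the hyperelliptic locus: one must pin down precisely that $\chi_{18}$ vanishes to order exactly $2$ transverse to $\h_3^\hyp$ inside $\h_3-\h_3^\red$ and that $\chi_9$ (the chosen square root) vanishes to order exactly $1$ transverse to $\cT_3^\hyp$ inside $\cT_3$, so that $\{y^2=\chi_{18}\}$ is smooth there and $\Phi$ is \'etale rather than merely finite. This is where one genuinely needs the structure theory of Teichm\"uller modular forms in genus $3$ (Ichikawa, and the extension of Cl\'ery--Faber--van der Geer) rather than formal manipulation of the Torelli theorem; everything else is bookkeeping with the $2{:}1$ cover $\pi$ and Torelli. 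A secondary point to be careful about is the stacky/level structure interpretation --- $\cT_3$, $\h_3$ and the double cover should all be taken as complex manifolds (Torelli space, Siegel space), so that "$y^2=\chi_{18}(\tau)$" is literally an analytic subvariety of $(\h_3-\h_3^\red)\times\C$ and no gerbe subtleties intrude.
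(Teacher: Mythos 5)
Your route is genuinely different from the paper's: the paper first identifies $\h_3^\hyp$ with the zero locus of $\chi_{18}$ (via the fact that a smooth genus~3 curve is hyperelliptic exactly when one even theta constant vanishes) and then invokes uniqueness of the double cover of $\h_3-\h_3^\red$ branched along the smooth divisor $\h_3^\hyp-\h_3^\red$; the square root $y$ (i.e.\ $\chi_9$) is obtained as a \emph{consequence} of the proposition, not used as an input. Your plan --- import Ichikawa's $\chi_9$ and exhibit $\Phi=(\pi,\chi_9)$ as an explicit biholomorphism onto $\{y^2=\chi_{18}(\tau)\}$ --- is viable in principle, and there is no circularity since Ichikawa's construction of $\chi_9$ does not rest on this proposition. (Two smaller repairs: the sign change of $\chi_9$ under the deck transformation is best justified by its odd weight, $\det(-I_3)^9=-1$, rather than by the somewhat circular remark you give; and injectivity over the non-hyperelliptic locus needs the classical fact that $\chi_{18}$ is nonzero off $\h_3^\hyp$ within the Jacobian locus, which is the same theta-null characterization the paper starts from.)

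However, the key local statement you rely on --- and explicitly flag as the main thing to be proved --- is wrong: $\chi_{18}$ does \emph{not} vanish to order $2$ transverse to $\h_3^\hyp$ inside $\h_3-\h_3^\red$; it vanishes to order exactly $1$. At a hyperelliptic period matrix exactly one even theta constant vanishes and its theta-null divisor is reduced, so the divisor of $\chi_{18}$ on $\h_3$ is reduced along $\h_3^\hyp$; compare (\ref{eqn:div-chi18}), where the hyperelliptic locus $H$ appears with coefficient $1$. The order-$2$ vanishing you have in mind is that of the \emph{pullback} of $\chi_{18}$ to $\cT_3$ along $\cT_3^\hyp$, where the covering is ramified, and you conflate the two spaces. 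This matters precisely where your argument is most delicate: if $\chi_{18}$ really vanished to order $2$ on the base, then near a point of $\h_3^\hyp$ the hypersurface $y^2=\chi_{18}(\tau)$ would be locally of the form $y^2=t^2u$ with $u$ a unit, hence singular (two branches crossing) along $\{y=0\}$, and it could not be biholomorphic to the smooth manifold $\cT_3$ --- the proposition itself would be false under your claimed order of vanishing. With the correct order-$1$ statement the hypersurface is smooth, $y$ is a transverse coordinate matching $\chi_9$ as a reduced defining equation of $\cT_3^\hyp$, and your local, injectivity and fiberwise-surjectivity arguments go through. So the approach can be salvaged, but only after replacing the order-$2$ claim by the reducedness of the theta-null divisor along the hyperelliptic locus.
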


\begin{proof}
Since a smooth genus 3 curve is hyperelliptic if and only if it has a (necessarily unique) vanishing even theta characteristic\footnote{A theta characteristic $\alpha$ is a square root of the canonical divisor class. It is even if $h^0(\alpha)$ is even. In genus 3, an even theta characteristic is effective if and only if $h^0(\alpha)=2$.}, it follows that $\h_3^\hyp$ is the zero locus of $\chi_{18}$. The result follows as $\cT_3 \to \h_3-\h_3^\red$ is the unique double covering of $\h_3-\h_3^\red$ branched along the smooth divisor $\h_3^\hyp-\h_3^\red$.
\end{proof}

The function $y \in \cO(\cT_3)$ is thus a square root of $\chi_{18}$. In the terminology of Ichikawa \cite{ichikawa} it is a {\em Teichm\"uller form} of genus 3 and is, in fact, the most basic of them. It has weight 9, meaning it is a section of the 9th power of the determinant of the Hodge bundle, and vanishes to order 1 along the hyperelliptic locus $\cT_3^\hyp$ of Torelli space. It has been (re)christened $\chi_9$ by Cl\'ery--Faber--van der Geer in \cite{cfg}.

Denote the coordinates in $\h_3$ by
$$
\tau = \begin{pmatrix}
\tau_{11} & \tau_{12} & \tau_{13} \cr \tau_{12} & \tau_{22} & \tau_{23} \cr \tau_{13} & \tau_{23} & \tau_{33}
\end{pmatrix}.
$$

The genus 3 analogue of the $q$-disk is the quotient of $\h_3$ by the unipotent subgroup $\Z^3$ of $\Sp_3(\Z)$ that acts via
$$
(n_1,n_2,n_3) : \tau \mapsto \tau + \diag(n_1,n_2,n_3).
$$
Set $q_j = \exp(2\pi i \tau_{jj})$. Denote the punctured unit disk by $\bD'$. The map
$$
\Z^3 \bs \h_3 \to (\bD')^3 \times \C^3, \quad \tau \mapsto (q_1,q_2,q_3,\tau_{12},\tau_{13},\tau_{23})
$$
is an open immersion. Define a partial compactification $(\Z^3\bs\h_3)^c$ of $\Z^3\bs \h_3$ by adding the three coordinate hyperplanes $q_j=0$, but not their intersections $q_j=q_k=0$, $j\neq k$. Set
$$
U = (\Z^3\bs\h_3)^c - \text{closure of }(\Z^3\bs \h_3)^\red.
$$
Let $D_j$ be the divisor $q_j=0$ and $D$ be their union.

The Siegel modular form $\chi_{18}$ descends to a function $U \to \C$, which we will also denote by $\chi_{18}$. We have
\begin{equation}
\label{eqn:div-chi18}
\div(\chi_{18}) = H + 2D
\end{equation}
where $H$ denotes the class of the image of $\h_3^\hyp$. (See \cite[\S3]{cfg}.) Denote the normalization of the zero locus of
$$
y^2 - \chi_{18} : U \times \C \to \C
$$
by $V$. It is a smooth double covering of $U$ branched over the hyperelliptic locus $U^\hyp$. Since $\chi_{18}$ vanishes to order 2 along $D$, the projection $\pi : V \to U$ is not ramified over $D$, only over $U^\hyp$. Denote the inverse image of $D$ in $V$ by $\Delta$. It lies over the smooth locus of the boundary divisor $\Delta_0$ of $\cMbar_3$. The divisor $D$ lies over a boundary divisor of each toroidal compactification of $\cA_3$.

The modular form $\chi_9$ extends to a holomorphic function $V \to \C$, also denoted $\chi_9$. We have
\begin{equation}
\label{eqn:div-chi9}
\div(\chi_9) = H + \Delta.
\end{equation}

The representations $A$ and $B$ of $\GL(B)$ correspond to automorphic vector bundles $\sA$ and $\sB$ over $\cA_3$, where $\sB$ is the Hodge bundle. Their pullbacks to $\Z^3\bs \h_3$ and its double covering (also denoted $\sA$ and $\sB$) extend to $U$ (resp.\ $V$) as graded quotients of the Hodge filtration of Deligne's canonical extension \cite{deligne:diff_eq} of $\bH$ to $U$ (resp.\ $V$). When $\sA$ is regarded as a sheaf over $X$, where $X=\cA_3, \h_3, \cT_3, V,\dots$, we will denote it by $\sA_X$. Similarly with $\sB$.

The Teichm\"uller modular form $\chi_9$ is a section of $(\det \sB)^9$ over $\cM_3$, $\cT_3$ and $V$. It vanishes to order 1 on the hyperelliptic locus $V^\hyp$. Denote the inclusion $V^\hyp \to V$ by $j$.

\begin{lemma}
\label{lem:Omega}
The vector bundle $\pi^\ast \Omega^1_U(\log D)$ over $V$ is isomorphic to $S^2\sB_V$. Consequently, we have a short exact sequence
$$
0 \to \pi^\ast \Omega^1_U(\log D) \to \Omega^1_V(\log \Delta) \to j^\ast (\det\sB_V)^9 \to 0.
$$
\end{lemma}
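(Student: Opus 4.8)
The plan is to prove the isomorphism first and then deduce the exact sequence from it, using the geometry of the period map $\pi:V\to U$. For $\pi^\ast\Omega^1_U(\log D)\cong S^2\sB_V$, I would start from the classical fact, recalled in Section~\ref{sec:period-map}, that the codifferential of the period map identifies the cotangent bundle of $\h_g$ with the second symmetric power of the Hodge bundle, $\Omega^1_{\h_g}\cong S^2\sB_{\h_g}$. This isomorphism is $\Sp_g(\Z)$-equivariant, so it descends to $\Z^3\bs\h_3$, and it remains to check that it extends across the boundary $D$. In the coordinates $q_1,q_2,q_3,\tau_{12},\tau_{13},\tau_{23}$ of $U$ the bundle $\Omega^1_U(\log D)$ is free on $\frac{dq_1}{q_1},\frac{dq_2}{q_2},\frac{dq_3}{q_3},d\tau_{12},d\tau_{13},d\tau_{23}$, which up to non-zero constants is the frame $\{d\tau_{jk}\}_{j\le k}$; over $\Z^3\bs\h_3$ this corresponds under the classical isomorphism to the products $\omega_j\omega_k$ of the coordinate differentials $\omega_1,\omega_2,\omega_3$ of the universal abelian variety. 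Since each $\omega_j=dz_j$ extends to a local frame of Deligne's canonical extension $\sB_U$ across $D_j$ — the genus $g$ analogue of the fact that $dz$ becomes the nowhere-vanishing differential $\tfrac{1}{2\pi i}\tfrac{dw}{w}$ on the Tate curve, as also follows from Mumford's description of the log cotangent bundle of a toroidal compactification of $\cA_g$ — the isomorphism extends to $\Omega^1_U(\log D)\cong S^2\sB_U$. Pulling back along $\pi$ and using $\sB_V=\pi^\ast\sB_U$ — valid because the Hodge bundle of a curve coincides with that of its jacobian, compatibly with canonical extensions, while $\pi$ is \'etale near $\Delta$ and $\bH$ has no monodromy along the branch locus $U^\hyp$ of $\pi$ — gives $\pi^\ast\Omega^1_U(\log D)\cong S^2\pi^\ast\sB_U=S^2\sB_V$.

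For the exact sequence, the map $\pi^\ast\Omega^1_U(\log D)\to\Omega^1_V(\log\Delta)$ is pullback of log $1$-forms; it lands in $\Omega^1_V(\log\Delta)$ because $\pi$ is \'etale over $D$, and under the isomorphism above it is the extension over $V$ of the codifferential of the period map, i.e.\ of the multiplication map $S^2H^0(\Omega^1_C)\to H^0(\Omega^{\otimes2}_C)$. By Noether's theorem this multiplication map is an isomorphism at a non-hyperelliptic curve — in genus $3$ both sides have dimension $3g-3=6=\binom{g+1}{2}$ — so the bundle map is injective with cokernel supported on the ramification divisor $V^\hyp$ of $\pi$; since $\chi_{18}$ vanishes to even order along $D$ by (\ref{eqn:div-chi18}), $\pi$ is in fact \'etale near $\Delta$, and since $\pi$ is simply ramified along $V^\hyp$ the cokernel is a line bundle $L$ on $V^\hyp$. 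Thus the sequence has the form $0\to\pi^\ast\Omega^1_U(\log D)\to\Omega^1_V(\log\Delta)\to j_\ast L\to 0$, and taking determinants and using the first part gives $K_V(\log\Delta)=(\det\sB_V)^{g+1}\otimes\cO_V(V^\hyp)$, consistent with Riemann--Hurwitz but not by itself enough to identify $L$.

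To identify $L$ with $j^\ast(\det\sB_V)^9$ I would bring in the Teichm\"uller form $\chi_9$. It is a section of $(\det\sB_V)^9$ which, by (\ref{eqn:div-chi9}) together with $\chi_9^2=\pi^\ast\chi_{18}$, vanishes simply along $V^\hyp$, so near the generic point of $V^\hyp$ it trivializes $(\det\sB_V)^9\otimes\cO_V(-V^\hyp)$ and thereby relates $j^\ast(\det\sB_V)^9$ to the (co)normal bundle of $V^\hyp$ in $V$. On the other hand, along $V^\hyp$ the period map has the local normal form $y^2=\chi_{18}$, in which $dy$ frames the cokernel $L$ of $d\pi$; comparing this frame with the one produced by $\chi_9$ identifies $L$ with $j^\ast(\det\sB_V)^9$, as asserted.

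The delicate step is this last matching of line bundles on $V^\hyp$: it requires keeping careful track of the square root in $y^2=\chi_{18}$, of the ramification index $2$ of $\pi$ over $U^\hyp$, and of the log structure where $V^\hyp$ meets $\Delta$, against the precise divisor of $\chi_9$ as a section of $(\det\sB_V)^9$ — the facts about $\chi_9$ recorded above from \cite{cfg} are assembled for exactly this purpose. Everything else is either classical (the cotangent bundle of Siegel space, toroidal compactifications, Noether's theorem, Riemann--Hurwitz) or a direct computation in the explicit coordinates on $U$ and $V$ introduced above.
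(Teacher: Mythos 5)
Your route is the same as the paper's: it likewise invokes the Faltings--Chai isomorphism $\Omega^1_U(\log D)\cong S^2\sB_U$ (which you rederive in the coordinates $q_j,\tau_{jk}$ rather than cite), pulls it back via $\sB_V=\pi^\ast\sB_U$, and gets the exact sequence from the fact that $\pi$ is a double cover, \'etale over $D$ and simply branched along $V^\hyp$, so that the quotient is a line bundle supported on $V^\hyp$, which is then identified using the fact that $V^\hyp$ is the reduced zero locus of $\chi_9$, a section of $(\det\sB_V)^9$. Up to that last identification your argument is complete and agrees with the paper's.

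The gap is exactly the step you defer, and it cannot be settled by an unspecified ``comparison of frames.'' The cokernel of $\pi^\ast\Omega^1_U(\log D)\to\Omega^1_V(\log\Delta)$ is not an anonymous line bundle: it is the sheaf of relative (log) differentials, and for a double cover simply branched along the smooth divisor $V^\hyp$ the map $s\mapsto ds$ canonically identifies it with the conormal bundle $\cI_{V^\hyp}/\cI_{V^\hyp}^2=j^\ast\cO_V(-V^\hyp)$; locally $u=y^2$, $\pi^\ast du=2y\,dy$, and the class of $dy$ generates the quotient and transforms as a conormal frame. Feeding in $\div(\chi_9)=V^\hyp+\Delta$ from (\ref{eqn:div-chi9}), this gives $j^\ast\cO_V(-V^\hyp)\cong j^\ast\big[(\det\sB_V)^{-9}(\Delta)\big]$, i.e.\ the \emph{dual} of the bundle you (and the lemma) name, up to an $\cO(\Delta)$-twist along $V^\hyp\cap\Delta$. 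So your closing assertion that the frame produced by $\chi_9$ ``identifies $L$ with $j^\ast(\det\sB_V)^9$, as asserted'' is precisely what a careful execution does not yield; your own warning about the square root, the ramification index and the precise divisor of $\chi_9$ is where the discrepancy lives. For comparison, the paper disposes of this point in one sentence by declaring that $V^\hyp$, being reducedly cut out by $\chi_9$, has ``conormal bundle'' $j^\ast(\det\sB_V)^9$ --- with the standard convention that bundle is the \emph{normal} bundle of $\div(\chi_9)$, the object that actually enters later in Lemma~\ref{lem:technical}(\ref{item:ses}) and Theorem~\ref{thm:gg-genus3-final}, where one tensors with $\cO(V^\hyp)$ to allow a simple pole. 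So to finish you must either carry out the identification honestly and record the resulting dualization (and $\Delta$-twist), or explain in what sense the quotient is to be read as $\cO_V(V^\hyp)|_{V^\hyp}\cong j^\ast(\det\sB_V)^9$; as written, the essential content of the lemma is left unproved.
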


\begin{proof}
Since $S^2\sB_U$ is isomorphic to $\Omega_U^1(\log D)$, \cite[p.~117]{faltings-chai}, it follows that $S^2\sB_V$ is isomorphic to $\Omega^1_V(\log \Delta)$ away from the hyperelliptic locus. Since the hyperelliptic locus in $V$ has reduced defining equation $\chi_9=0$, it has conormal bundle $j^\ast (\det\sB_V)^9$. The exact sequence follows as $\pi : V \to U$ is ramified along $V^\hyp$.
\end{proof}

\section{Proof of Theorem~\ref{thm:gg-genus3}}
\label{sec:proof-cp}

Theorem~\ref{thm:gg-genus3} is a consequence of Theorem~\ref{thm:gg-genus3-final} below. Set $\cT = \cT_3$, $\h = \h_3$ and $\h' = \h_3 - \h_3^\red$. We will work over $\cT$, $\h'$, $U$ and $V$, as appropriate. Their hyperelliptic loci will be denoted by $\cT^\hyp$, $\h^\hyp$, $V^\hyp$, etc.

\begin{lemma}
\label{lem:technical}
\phantom{xx}
\begin{enumerate}[(a)]

\item
\label{item:real-anal}
The tensor $\nablabar \nu$ is a real analytic section of the holomorphic vector bundle $\Gr_F^{-1}\cV \otimes \Omega^1_\cT$ over $\cT$.

\item
\label{item:gg-log}
The Green--Griffiths invariant $\deltabar(\nu)$ is a holomorphic section of $\Gr_F^{-1}\cV\otimes \Omega_V^1(\log \Delta)$ over $V$.

\item
\label{item:decomp}
The kernel of $\nablabar : \Gr_F^{-1} \cV \otimes \Omega^1_V(\log\Delta) \to \Gr_F^{-2} \cV \otimes \Omega_V^2(\log\Delta)$ has a natural direct sum decomposition
$$
\nablabar \Gr_F^0 \cV_V \oplus (S^4\sB_V \otimes \det\sA_V)^\sim
$$
where the space of sections of $(S^4\sB_V \otimes \det\sA_V)^\sim$ over the open subset $O$ of $V$ is
$$
\{\omega \in H^0(O,\Gr_F^{-1} \cV \otimes \Omega_V^1(\log\Delta)) : \chi_9\omega \in H^0(O,S^4 \sB_V \otimes \det\sA_V)\}.
$$

\item
\label{item:ses}
There is an exact sequence
$$
0 \to S^4 \sB_V \otimes \det \sA_V \to (S^4 \sB_V \otimes \det \sA_V)^\sim \to j_\ast \big[S^4 \sB_V \otimes (\det\sB_V)^8\big]
$$
where $j : V^\hyp \to V$ is the closed inclusion of the hyperelliptic locus.

\item
\label{item:decomp-nu}
There is a unique real analytic section $f$ of $\Gr_F^0 \cV_V$ over $V-\Delta$ and a unique holomorphic section $\deltatilde(\nu)$ of $(S^4\sB_V \otimes \det\sA_V)^\sim$ over $V$ such that
$$
\nablabar \nu = \deltatilde(\nu) + \nablabar f.
$$
The section $\deltatilde(\nu)$ is anti-invariant under the involution. Both $f$ and $\nablabar f$ vanish on the hyperelliptic locus.

\end{enumerate}
\end{lemma}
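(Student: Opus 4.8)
The plan is to get (a) and (b) from the general machinery, reduce (c) to the representation-theoretic computation of Proposition~\ref{prop:coho} away from the hyperelliptic locus $V^\hyp$ and push it across $V^\hyp$ using the $\cO_V$-linearity of $\nablabar$ together with Lemma~\ref{lem:Omega}, obtain (d) by twisting Lemma~\ref{lem:Omega}, and assemble (e) by splitting the cocycle $\nablabar\nu$ along the decomposition of (c) and exploiting the symmetry results of Part~\ref{part:ceresa}. Part (a) is Corollary~\ref{cor:nablabar} (equivalently Proposition~\ref{prop:derivatives}) applied with $S=\cT$, the Ceresa normal function on $\cT_3$ being pulled back from $\cM_3$, so $\nablabar\nu$ is the $(1,0)$-part of the real-analytic $\nabla\nu_\R$ and hence a real-analytic section of the holomorphic bundle $\Gr_F^{-1}\cV\otimes\Omega^1_\cT$. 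For (b) the only issue is the behaviour along $\Delta$: since $\nu$ is admissible, Deligne's canonical extension of $\bH$, hence of $\cV$ and of its Hodge filtration, exists over $V$, $\nablabar$ extends to an $\cO_V$-linear map $\Gr^p_F\cV\to\Gr^{p-1}_F\cV\otimes\Omega^1_V(\log\Delta)$, and --- because the extension $\cE$ of $\bV$ by $\Z$ attached to $\nu$ is admissible, so a lift $\ee_F$ of $1$ into $F^0$ of its canonical extension exists --- the Green--Griffiths cocycle $-\nabla\ee_F$, hence its class $\deltabar(\nu)$, extends holomorphically across $\Delta$ as a section of $\Gr^{-1}_F\cV\otimes\Omega^1_V(\log\Delta)$; away from $\Delta$ there is nothing to check, as $\deltabar(\nu)$ is already holomorphic on all of $\cM_3$.

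\textbf{Part (c).} On $V-V^\hyp$ the covering $\pi\colon V\to U$ is \'etale and $\Omega^1_V(\log\Delta)=\pi^\ast\Omega^1_U(\log D)=S^2\sB_V$ by Lemma~\ref{lem:Omega}, so the complex $\Gr^\bdot_F(\cV_V\otimes\Omega^\bdot_V(\log\Delta))$ --- even near $\Delta$ --- is the pullback of the $\GL(B)$-equivariant complex $\Gr^\bdot_F(V\otimes\Lambda^\bdot S^2 B)$ of Proposition~\ref{prop:coho}, and part (3) of that proposition identifies $\ker\nablabar$ in degree $1$ with $\nablabar\Gr^0_F\cV_V\oplus(S^4\sB_V\otimes\det\sA_V)$, which is the asserted decomposition there since $\chi_9$ is invertible (so $(\,\cdot\,)^\sim=S^4\sB_V\otimes\det\sA_V$). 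Near $V^\hyp$, $\chi_9$ is a reduced equation for $V^\hyp$ and multiplication by $\chi_9$ commutes with the $\cO_V$-linear $\nablabar$; combined with the elementary-modification inclusion $\pi^\ast\Omega^1_U(\log D)\hookrightarrow\Omega^1_V(\log\Delta)$ of Lemma~\ref{lem:Omega}, this shows that a $\nablabar$-closed $\omega$ lies in $(S^4\sB_V\otimes\det\sA_V)^\sim$ exactly when $\chi_9\omega$ is a $\nablabar$-closed section of $S^4\sB_V\otimes\det\sA_V$, and that $\ker\nablabar=\nablabar\Gr^0_F\cV_V\oplus(S^4\sB_V\otimes\det\sA_V)^\sim$; the sum is direct because, after multiplying by $\chi_9$, the two summands land respectively in the coboundaries $\nablabar\Gr^0_F\cV$ and in the complementary submodule $S^4\sB_V\otimes\det\sA_V$ of the cocycles, which meet only in $0$ by Proposition~\ref{prop:coho}(3). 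Injectivity of $\nablabar$ on $\Gr^0_F\cV_V$ over $V-\Delta$ (the corollary to Proposition~\ref{prop:coho}, extended across $V^\hyp$ via the genus~3 period-map analysis and Lemma~\ref{lem:Omega}) identifies the first summand with $\Gr^0_F\cV_V$.

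\textbf{Parts (d) and (e).} Part (d) is Lemma~\ref{lem:Omega} twisted: the defining condition of $(\,\cdot\,)^\sim$ gives the inclusion $S^4\sB_V\otimes\det\sA_V\hookrightarrow(S^4\sB_V\otimes\det\sA_V)^\sim$, while the residue along $V^\hyp$ gives the map to $j_\ast$ of the $S^4$-part of $\Gr^{-1}_F\cV$ tensored with the conormal bundle $j^\ast(\det\sB_V)^9$, that is, to $j_\ast[S^4\sB_V\otimes\det\sA_V\otimes(\det\sB_V)^9]=j_\ast[S^4\sB_V\otimes(\det\sB_V)^8]$ using the genus-$3$ Poincar\'e-duality isomorphism $\det\sA_V\cong(\det\sB_V)^{-1}$; left-exactness is the definition and nothing is claimed on the right. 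For (e): by (c), $\cH^1(\Gr^0_F(\cV_V\otimes\Omega^\bdot_V(\log\Delta)))\cong(S^4\sB_V\otimes\det\sA_V)^\sim$ over $V$, so the holomorphic class $\deltabar(\nu)$ (holomorphic over $V$ by (b)) has a unique representative $\deltatilde(\nu)$ in $(S^4\sB_V\otimes\det\sA_V)^\sim$ and it is holomorphic; on $V-V^\hyp-\Delta$ the cocycle $\nablabar\nu$ represents $\deltabar(\nu)$, so $\nablabar\nu-\deltatilde(\nu)$ is a coboundary, and by injectivity of $\nablabar$ there is a unique real-analytic section $f$ of $\Gr^0_F\cV_V$ on $V-V^\hyp-\Delta$ with $\nablabar\nu=\deltatilde(\nu)+\nablabar f$. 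The anti-invariance of $\deltatilde(\nu)$ under the deck involution $\iota_V$ of $V\to U$ follows because $\nu$, being $\Sp_3(\Z)$-equivariant, is $\iota_V$-equivariant, so $\nablabar\nu$ is anti-invariant and the two $\iota_V$-stable summands split it equivariantly; hence $\nablabar f$, and so $f$, is anti-invariant. For the vanishing on the hyperelliptic locus: near an interior point $[C]\in V^\hyp$, $\Aut C$ acts on $\cV_V\otimes\Omega^\bdot_V(\log\Delta)$ preserving the decomposition of (c) (its summands being $\GL(B)$-defined), and by Proposition~\ref{prop:symmetry} it fixes $\nablabar\nu$ and $\deltabar(\nu)$, hence $\deltatilde(\nu)$, hence $\nablabar f$, hence $f$; therefore $f([C])$ lies in the $\Aut C$-fixed part of $\Gr^0_F V_C\cong PH^{2,1}(\Jac C)$, which is $0$ since the hyperelliptic involution acts on $H_1(C)$ as $-1$ and hence on $\Gr^0_F V_C$ as $(-1)^3=-1$. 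Thus $f$ vanishes at the generic points of $V^\hyp$, hence on all of $V^\hyp$; this vanishing is what lets one extend $f$ real-analytically across $V^\hyp$ to a section over $V-\Delta$ (real-analytic Riemann extension, the possible polar part of $\nablabar^{-1}$ along $V^\hyp$ being killed by the vanishing of $\nablabar f$); and $\nablabar f$ vanishes on $V^\hyp$ too because $\nablabar$ is $\cO_V$-linear on graded pieces, so $(\nablabar f)|_{V^\hyp}=\nablabar(f|_{V^\hyp})=0$. Uniqueness of $(f,\deltatilde(\nu))$ is immediate from the directness of (c) and injectivity of $\nablabar$.

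\textbf{Main obstacle.} I expect the real difficulty to be the analysis along $V^\hyp$ in (c) and (e): verifying that the kernel decomposition persists across $V^\hyp$, where $\Omega^1_V(\log\Delta)$ genuinely differs from $S^2\sB_V$, and that $f$ extends real-analytically across $V^\hyp$ in spite of the possible rank drop of $\nablabar$ there. The extension of $f$ is rescued only by the vanishing of $\nablabar f$ along $V^\hyp$, which itself rests on the $\Aut C$-symmetry argument, so the steps must be sequenced carefully: first establish $\deltatilde(\nu)$ globally and holomorphically, then bound $f$ near $V^\hyp$ using equivariance at generic hyperelliptic points, and only then read off the vanishing. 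Keeping the sign conventions for the deck involution $\iota_V$ consistent with the genuine $\Aut C$-action at its fixed points is a secondary, bookkeeping-level, subtlety; the remaining parts are formal given Proposition~\ref{prop:coho}, Lemma~\ref{lem:rep}, Lemma~\ref{lem:Omega} and the admissibility of $\nu$.
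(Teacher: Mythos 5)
Your proposal is correct and follows essentially the same route as the paper: (a) via Corollary~\ref{cor:nablabar}, (b) via admissibility and the logarithmic extension of the Green--Griffiths invariant (Corollary~\ref{cor:gg-log}), (c)--(d) via Proposition~\ref{prop:coho} combined with the $\cO_V$-linearity of $\nablabar$, multiplication by $\chi_9$ and Lemma~\ref{lem:Omega}, and (e) via the resulting canonical splitting of $\nablabar\nu$ together with injectivity of $\nablabar$ on $\Gr_F^0\cV$. Your $\Aut C$-fixed-point argument for the vanishing of $f$ along $V^\hyp$ is the paper's anti-invariance of $\nablabar\nu$ under the deck involution $\sigma$ in disguise (the hyperelliptic involution induces $\sigma$ at its fixed points), so it is an equivalent, slightly longer form of the same symmetry argument rather than a different approach.
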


\begin{proof}
At two points in the proof, we will appeal to general results about normal functions from Part~\ref{part:vmhs}. These do not depend on results in the preceding sections. Denote the sheaf of real analytic $j$ forms on a variety $X$ by $\sE_X^j$ and the sheaf of real analytic $(p,q)$ forms by $\sE_X^{p,q}$. Set $\sE_X = \sE_X^0$.

The map $\nablabar : \Gr_F^0 \cV \to \Gr_F^{-1}\cV \otimes \Omega^1_{\cT}$ of holomorphic vector bundles extends to an $\sE_{\cT}$-linear map
$$
\nablabar' : \Gr_F^0 \cV\otimes_{\cO_\cT} \sE_{\cT} \to \Gr_F^{-1}\cV \otimes_{\cO_\cT} \sE_{\cT}^{1,0},
$$
where $\nablabar'$ denotes the map induced by the $(1,0)$ component of
$$
\nabla : \cV\otimes_{\cO_\cT} \sE_{\cT} \to \cV\otimes_{\cO_\cT} \sE_{\cT}^1.
$$
By Corollary~\ref{cor:nablabar}, $\nablabar \nu$ is the image of $\nablabar'\nu_\R$ in $\Gr_F^{-1}\cV \otimes_{\cO_\cT} \sE_\cT^{1,0}$. It is real analytic. This proves (\ref{item:real-anal}). Proposition~\ref{prop:anf} implies that $\nu$ is admissible. So (\ref{item:gg-log}) follows from Corollary~\ref{cor:gg-log}. 

Proposition~\ref{prop:coho}(\ref{item:cocycles}) implies that the kernel of $\nablabar : \Gr_F^0 \cV_U \to \Gr_F^{-1}\cV \otimes \Omega^1_U(\log D)$ is isomorphic to $(S^4\sB_U \otimes \det \sA_U) \oplus (S^2 \sA_U \otimes \det \sB_U)$. Assertion (\ref{item:decomp}) follows from this and the fact that, since $\nablabar$ is $\cO_V$-linear, $\nablabar \Gr_F^0 \cV_V$ is a subsheaf of $\pi^\ast \Gr_F^{-1}\cV\otimes \Omega^1_U(\log D)$.

Denote the hyperelliptic locus of $V$ by $H$. The exactness of the sequence in (\ref{item:ses}) follows from the fact that $(S^4\sB_V\otimes \det\sA_V)^\sim$ is a subsheaf of $S^4\sB_V\otimes \det\sA_V(H)$ and that the conormal bundle of $H$ in $V$ is the restriction of $(\det \sB_V)^9$ to $H$. This is because $H$ is defined by $\chi_9$, which vanishes to order 1 on $H$ and is a section of $(\det\sB)^9$.

To prove (\ref{item:decomp-nu}) first choose a local holomorphic lift $\nutilde$ of $\nu$ defined on an open set $O$ of $V$. We can write
$$
\nablabar \nutilde = (\nablabar h, \delta(\nutilde)) \in H^0(O,\nablabar\Gr_F^0 \cV) \oplus H^0(O,(S^4\sB\otimes \det\sA)^\sim),
$$
where $h$ is a section of $\Gr_F^0 \cV_V$. Both components are holomorphic on $V$. Since two lifts of $\nu$ differ by a section of $F^0\cV_V$, the second component does not depend on the choice of the lift $\nutilde$. Call it $\deltatilde(\nu)$. The existence of $f$ follows from the fact that $\nablabar : \Gr_F^0 \cV\otimes_{\cO_\cT} \cE_\cT \to \Gr_F^{-1}\cV\otimes_{\cO_\cT} \cE^{1,0}$ is injective, so there is a unique real analytic section $f$ of $\Gr_F^0 \cV$ over $V-\Delta$ with
$$
\nablabar \nu := \nablabar' \nu_\R = (\nablabar f, \deltatilde(\nu)).
$$
The vanishing of $f$ and $\nablabar f$ on the hyperelliptic locus follows as $\nablabar\nu$ is anti-invariant under the involution of $\sigma : V\to U$, which forces $f$ to vanish on the hyperelliptic locus, and as $\nablabar$ is a $\sigma$-invariant map of vector bundles.
\end{proof}

We now assume familiarity with \cite{cfg} and use some of the notation in it, especially the notation for the weight $(j,k,\ell)$ of Siegel and Teichm\"uller modular forms.\footnote{Briefly, fix a maximal torus of $\GL(B)$. A holomorphic (resp.\ meromorphic) modular form of degree (or genus) 3 and weight $(j,k,\ell)$ is a holomorphic (resp.\ meromorphic) section of the automorphic vector bundle over $\h_3$ that corresponds to the irreducible $\GL(B)$ module with a highest weight vector of toral weight $[j+k+\ell,k+\ell,\ell]$. It is invariant under the natural $\Sp_3(\Z)$ action and thus descends to a section defined over $U-D$. Siegel modular forms extend to sections defined over $U$. No growth conditions are imposed along the divisor $D$; they are imposed only in genus 1. We choose the torus so that sections of the vector bundle $\sB$ are modular forms of weight $(1,0,0)$, which implies that sections of $\sA$ have weight $(0,1,-1)$. With this convention $\chi_{18}$ has weight $(0,0,18)$ and $\chi_9$ has weight $(0,0,9)$.}

\begin{theorem}
\label{thm:gg-genus3-final}
The Green--Griffiths invariant $\deltabar(\nu)$ of the Ceresa cycle is a section of the subsheaf $(S^4\sB_V \otimes\det \sA_V)^\sim$ of $S^4\sB_V \otimes\det \sA_V(H)$. It is a non-zero multiple of the Teichm\"uller modular form $\chi_{4,0,-1}$. Its restriction to the hyperelliptic locus of $V$ projects, under the projection in Lemma~\ref{lem:technical}(\ref{item:ses}), to a non-zero multiple of the section $\chi_{4,0,8}$ of $S^4\sB_V\otimes (\det\sB_V)^8$.
\end{theorem}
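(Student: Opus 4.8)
The plan is to recognise the Green--Griffiths invariant $\deltabar(\nu)$ as a meromorphic Teichm\"uller modular form of weight $(4,0,-1)$, use the classification of such forms in \cite{cfg} to see that it is a scalar multiple of $\chi_{4,0,-1}$, and then analyse $\nu$ along the hyperelliptic locus $V^\hyp$ both to prove the scalar is non-zero and to compute the image under the residue map of Lemma~\ref{lem:technical}(\ref{item:ses}). The first sentence of the theorem is essentially bookkeeping from Lemma~\ref{lem:technical}: part~(\ref{item:gg-log}) gives that $\deltabar(\nu)$ is a global holomorphic section of $\Gr_F^{-1}\cV\otimes\Omega^1_V(\log\Delta)$; part~(\ref{item:decomp}) exhibits $(S^4\sB_V\otimes\det\sA_V)^\sim$ as the summand of the $\nablabar$-closed sections of that bundle complementary to $\nablabar\Gr_F^0\cV_V$; and part~(\ref{item:decomp-nu}) shows that in $\nablabar\nu=\deltatilde(\nu)+\nablabar f$ the piece $\deltatilde(\nu)$ is a holomorphic cocycle representing $\deltabar(\nu)$ lying in $(S^4\sB_V\otimes\det\sA_V)^\sim$, which by its very definition is a subsheaf of $S^4\sB_V\otimes\det\sA_V(H)$.

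Next, since the Ceresa normal function, and hence everything built from it in Part~\ref{part:ceresa}, is $\Sp_3(\Z)$-equivariant, $\deltabar(\nu)$ descends to a meromorphic section of the automorphic vector bundle attached to the irreducible $\GL(B)$-module $S^4B\otimes\det A$. In the weight conventions of \cite{cfg} (sections of $\sB$ have weight $(1,0,0)$, so sections of $\sA$ have weight $(0,1,-1)$) this bundle has weight $(4,0,-1)$, and by part~(\ref{item:gg-log}) the associated form has at most a logarithmic pole along $\Delta$ --- which imposes nothing in genus $\geq 2$ --- at most a simple pole along $H=V^\hyp$, and is holomorphic elsewhere. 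It is moreover anti-invariant under the involution $\sigma$ of $V$ over $U$: the involution $\iota$ of the universal jacobian carries the relative Ceresa cycle to its negative, so by Remark~\ref{rem:symmetry} the normal function, and therefore $\deltabar(\nu)$, changes sign under $\sigma$. The classification in \cite{cfg} identifies the space of genus 3 Teichm\"uller modular forms of weight $(4,0,-1)$ with these pole and symmetry properties as one-dimensional, spanned by $\chi_{4,0,-1}$; hence $\deltabar(\nu)=c\,\chi_{4,0,-1}$ for some $c\in\C$.

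To see $c\ne 0$: if $\deltabar(\nu)=0$ then, since Green's Proposition~\ref{prop:green} applies over the complement of the hyperelliptic locus by Proposition~\ref{prop:coho}, $\delta(\nu)$ would vanish there, so $\nu$ would be locally constant on $\cM_3$ minus that locus; but $\Lambda^3_0\bH$ is a non-trivial irreducible local system whose monodromy still surjects onto $\Sp_3(\Z)$ on that open set, so it has no non-zero monodromy-invariant real section, forcing $\nu\equiv 0$ there and hence everywhere, contradicting the non-triviality of the Ceresa normal function (visible already from Harris's hyperelliptic computation \cite{harris}). For the last assertion, $\chi_9$ is a square root of $\chi_{18}$, hence anti-invariant under $\sigma$ and vanishing to order exactly $1$ along $H$, while $\chi_9\,\chi_{4,0,-1}$ is $\sigma$-invariant of weight $(4,0,8)$ and holomorphic on $V$, so it is a non-zero multiple of $\chi_{4,0,8}$ (indeed $\chi_{4,0,8}=\chi_9\,\chi_{4,0,-1}$ in the notation of \cite{cfg}); thus the residue map of Lemma~\ref{lem:technical}(\ref{item:ses}) carries $\chi_{4,0,-1}$, and therefore $\deltabar(\nu)$ with $c\ne 0$, to a non-zero multiple of $\chi_{4,0,8}|_{V^\hyp}$, which by \cite[Prop.~6.5]{vdg-kouvidakis} has no zeros.

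The main obstacle is making the input from \cite{cfg} precise and matching it to $\deltabar(\nu)$: one must confirm that after clearing $\chi_9$ the form acquires only a simple (and not a higher-order) pole along $H$ --- equivalently, that $\deltatilde(\nu)$ lies in $(S^4\sB_V\otimes\det\sA_V)^\sim$ and not in a larger sheaf, which is exactly what Lemma~\ref{lem:technical} is arranged to deliver --- and that the classification of weight $(4,0,-1)$ Teichm\"uller forms with the stated behaviour really is one-dimensional. I expect the most delicate alternative, which bypasses the non-triviality input and directly yields assertion (3), to be a hands-on evaluation of $\nablabar\nu$ restricted to $V^\hyp$, using the model $y^2=f(x)$ of a hyperelliptic genus 3 curve, the canonical conic $\omega_1\omega_3=\omega_2^2$ that governs the ramification of the period map (Lemma~\ref{lem:Omega}), and Harris's formula \cite{harris}; the computation simplifies because $\nablabar f$ vanishes on $V^\hyp$ by Lemma~\ref{lem:technical}(\ref{item:decomp-nu}), so that the a priori only real-analytic $\nablabar\nu$ coincides there with the holomorphic object $\deltatilde(\nu)$.
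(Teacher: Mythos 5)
Your proposal follows essentially the same route as the paper: identify $\deltabar(\nu)$ with the holomorphic representative $\deltatilde(\nu)\in(S^4\sB_V\otimes\det\sA_V)^\sim$ supplied by Lemma~\ref{lem:technical}, pin it down up to a scalar by a one-dimensionality statement for the relevant space of modular forms, rule out the zero scalar by non-triviality of the Ceresa normal function, and read off the hyperelliptic statement from $\chi_{4,0,8}=\chi_9\,\chi_{4,0,-1}$. The one place where your argument is looser than the paper's is the dimension input: you invoke an unspecified ``classification in \cite{cfg}'' of anti-invariant meromorphic Teichm\"uller forms of weight $(4,0,-1)$ with at most a simple pole along $H$. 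As you yourself flag, this must be made precise, and the clean way is the paper's: $\chi_9\deltatilde(\nu)$ is a $\sigma$-invariant holomorphic section of weight $(4,0,8)$ over $U$, and it is automatically a \emph{cusp} form because $\chi_9$ vanishes along $\Delta$ while $\deltatilde(\nu)$ is holomorphic there as a section of $\Gr_F^{-1}\cV\otimes\Omega^1_V(\log\Delta)$; then Ta\"ibi's result \cite{taibi} that the space of Siegel cusp forms of weight $(4,0,8)$ is one-dimensional finishes the identification. Discarding the boundary condition, as your phrasing does (``the log pole along $\Delta$ imposes nothing''), would require one-dimensionality of the full space of weight-$(4,0,8)$ forms, which is not the input on record. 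Two smaller remarks: your non-vanishing argument (local constancy on the non-hyperelliptic locus would force $\nu$ to be a monodromy-invariant section, hence torsion, so $\nabla\nu_\R\equiv 0$, contradicting Harris's rank-one result at general hyperelliptic curves) is a valid alternative to the paper's appeal to the Johnson homomorphism, though ``forcing $\nu\equiv 0$'' should be weakened to ``forcing $\nu$ to be torsion''; and your computation of the image under the projection of Lemma~\ref{lem:technical}(\ref{item:ses}) --- multiply by $\chi_9$ and restrict to $V^\hyp$ --- is a legitimate shortcut past the paper's $d\chi_{18}$/$d\chi_9$ manipulation, since that is exactly what the projection map is.
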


\begin{proof}
We can identify $\deltabar(\nu)$ with its canonical representative $\deltatilde(\nu)$ as both are sections of $S^4 \sB \otimes \det \sA$ over $V$. Lemma~\ref{lem:technical}(\ref{item:decomp-nu}) implies that $\chi_9 \deltatilde(\nu)$ is a section of $S^4 \sB_V \otimes \det \sA_V$ that is invariant under the involution of $V$. It is therefore a Siegel modular form of weight $(4,0,8)$. Since $\chi_9$ vanishes on $\Delta$, it is a cusp form. Ta\"ibi \cite{taibi} has shown that the space of Siegel cusp forms of weight $(4,0,8)$ is one dimensional and spanned by the cusp form $\chi_{4,0,8}$. It follows that $\deltabar(\nu)$ is a multiple of the meromorphic Teichm\"uller modular form
$$
\chi_{4,0,-1} := \chi_{4,0,8}/\chi_9
$$
and that it is a section of $(S^4 \sB_V \otimes \det \sA_V)^\sim$.

This multiple is non-zero: The derivative $\nabla\nu_\R$ of the Ceresa normal function is a de~Rham representative of the class of $\nu$ in $H^1(\cM_3,\bV)$. This class is non-zero as its restriction to the Torelli group $T_3$ is twice the Johnson homomorphism. (See \cite[Thm.~11.1]{hain:msri}. There are many other proofs of non-triviality, such as Ceresa's original proof \cite{ceresa}.) This implies that the Griffiths invariant $\delta(\nu)$ is non-zero. Propositions~\ref{prop:green} and \ref{prop:coho} then imply that $\deltabar(\nu)$ is also non-zero.

Next we compute the restriction of $\deltatilde(\nu)$ to the hyperelliptic locus. We already know that $\chi_9 \deltatilde(\nu)$ descends to a non-zero multiple of the section $\chi_{4,0,8}$ of the vector bundle $S^4\sB_U\otimes (\det\sB_U)^8$. Since $U^\hyp$ is the divisor defined by $\chi_{18} = 0$ and since the restriction of $\nu$ to $V^\hyp$ is zero, we can write
$$
\chi_9\deltatilde(\nu)|_{U^\hyp} = h d\chi_{18}
$$
after identifying $S^4\sB_U\otimes \det\sA_U$ with $\Gr^{-1}_F\cV\otimes\Omega^1_U(\log D)$. Here $h$ is a section of the restriction of $S^4\sB\otimes (\det\sB_U)^8$ to $U^\hyp$ as the conormal bundle of $U^\hyp$ is $(\det\sB_U)^{18}$, which is trivialized by $d\chi_{18}$. It follows that the restriction of $\deltatilde(\nu)$ to $V^\hyp$ is
$$
\deltatilde(\nu)|_{V^\hyp} = h d\chi_{18}/\chi_9 = 2 h d\chi_9.
$$
Since the conormal bundle of $V^\hyp$ is isomorphic to $(\det \sB_V)^9$, which is trivialized by $d\chi_9$, we see that the restriction of $\deltatilde(\nu)$ to $V^\hyp$ can be identified with a non-zero multiple of the restriction of $\chi_{4,0,8}$ to $V^\hyp$.
\end{proof}

\begin{remark}
The result of Collino and Pirola (Thm.~\ref{thm:collino-pirola}) now follows from \cite[Prop.~10.1]{cfg}.
\end{remark}

\subsection{Proof of Corollary~\ref{cor:harris}}

Since the Ceresa normal function vanishes on the hyperelliptic locus, a smooth divisor, the rank of $\nu$ at each point of the hyperelliptic locus is at most 1. By Lemma~\ref{lem:technical}(\ref{item:decomp-nu}), $\nablabar f$ vanishes on the hyperelliptic locus. So, to prove the result, we need to show that $\deltabar(\nu)$ has no zeros on the hyperelliptic locus. By Theorem~\ref{thm:gg-genus3-final}, the restriction of $\deltabar(\nu)$ to the hyperelliptic locus can be identified with $\chi_{4,0,8}$. So it suffices to show that the restriction of $\chi_{4,0,8}$ to the hyperelliptic locus has no zeros.

In genus 3, the restriction of $\sB$ to the hyperelliptic locus (regarded as a stack) is $S^2\sW$, where $\sW$ is the rank 2 vector bundle whose fiber over the moduli point of a hyperelliptic curve is the linear system associated to its hyperelliptic series. By the representation theory of $\GL_2$, the restriction of $S^4\sB$ to the hyperelliptic locus in genus 3 decomposes
$$
S^8 \sW \oplus S^4 \sW \otimes (\det \sW)^2 \oplus (\det \sW)^4
$$
and $\det \sB$ restricts to $(\det \sW)^3$. So the restriction of $S^4\sB\otimes (\det \sB)^8$ to the hyperelliptic locus projects to
$$
S^8\sW \otimes (\det\sW)^{24} = [S^8\sW \otimes (\det\sW)^{-4}] \otimes (\det \sW)^{28}.
$$
By \cite[\S6]{vdg-kouvidakis}, the projection of the restriction of $\chi_{4,0,8}$ to the hyperelliptic locus onto this factor is the product $f_{8,-2}$ of the universal octic times the discriminant $\fd$ of the binary octic, which is a section of $(\det \sW)^{28}$. Since neither vanish at the moduli point of a smooth hyperelliptic curve, $\chi_{4,0,8}$ has no zeros there. 

This strengthens the genus 3 case of the result \cite[Thm.~6.5]{harris} of Bruno Harris. He proved that the derivative of the normal function of the Ceresa cycle has rank 1 at the general hyperelliptic curve of genus $g\ge 3$.

\part{Admissible normal functions}
\label{part:vmhs}

The inductive proof of the general case requires technical Hodge theory. In particular, it requires an understanding of the asymptotic behaviour of the variations of mixed Hodge structure that correspond to normal functions. 

After recalling the definition and basic properties of admissible variations of mixed Hodge structure, we apply them to describe the boundary behaviour of normal functions. We review the construction of the N\'eron model of a family of intermediate jacobians by Green, Griffiths and Kerr \cite{ggk} in the case where the variation is a nilpotent orbit. This is applied to study the asymptotic behaviour of admissible normal functions in codimension 1. In particular, it is used in the definition of {\em residual normal functions}, which plays a key role in the inductive proof of Theorem~\ref{thm:max-rk}.

\section{Admissible variations of mixed Hodge structure}
\label{sec:avmhs}

This is a terse review of variations of mixed Hodge structure. Basic references include the papers of Steenbrink--Zucker \cite{steenbrink-zucker} and Kashiwara \cite{kashiwara}.

Suppose that $S$ is a smooth quasi-projective variety. We write $S= \Sbar - D$, where $\Sbar$ is a smooth projective variety and $D$ is a normal crossing divisor. An admissible variation of $\Z$-MHS $\bE$ over $S$ consists of the following data:
\begin{enumerate}

\item  A local system $\bE_\Z$ over $S$ of finitely generated abelian groups which, for simplicity, we assume to have unipotent local monodromy.\footnote{This condition is satisfied by all variations of MHS in this paper.}

\item An increasing filtration (the weight filtration)
$$
0 = W_m \bE_\Q \subseteq W_{m+1} \bE_\Q \subseteq \cdots \subseteq W_n \bE_\Q = \bE_\Q
$$
of $\bE_\Q := \bE_\Z\otimes \Q$ by sub local systems.

\item A decreasing filtration (the Hodge filtration)
$$
\cE = F^a \cE \supseteq F^{a+1} \cE \supseteq \cdots \supseteq F^b \cE = 0
$$
of Deligne's canonical extension $\cE$ of $\bE_\Z\otimes \cO_S$ to $\Sbar$ by holomorphic sub-bundles. That is, $F^p\cE$ and $\cE/F^p\cE$ are locally free for each $p$.

\end{enumerate}

The flat connection on $\bE_\Z\otimes \cO_S$ extends to a connection
$$
\nabla : \cE \to \cE \otimes \Omega^1_\Sbar(\log D).
$$
Under the assumption of unipotent local monodromy, the canonical extension is characterized by the property that the residue of the connection at each smooth point of $D$ is nilpotent.

These data are required to satisfy the following conditions:
\begin{enumerate}[(a)]

\item The connection on $\cE$ satisfies Griffiths transversality:
\begin{equation}
\label{eqn:griff-trans}
\nabla : F^p \cE \to F^{p-1}\cE \otimes \Omega^1_\Sbar(\log D)
\end{equation}
for each $p$.

\item For each $s\in S$, the restriction of the Hodge and weight filtrations to the fiber of $\bE_\Z$ over $s$ is a mixed Hodge structure.

\item Each weight graded quotient $\Gr_W^m \bE$ admits a polarization in the sense of \cite[\S2]{schmid}.

\item For each holomorphically embedded arc $\alpha: \bD \to \Sbar$ satisfying $\alpha^{-1}(D) = \{0\}$, there is a {\em relative weight filtration}
$$
0 = M_r E_0 \subseteq M_{r+1} E_0 \subseteq \cdots \subseteq M_s = E_0
$$
of the fiber $E_0$ of $\cE$ over $\alpha(0)$. Set
$$
N = -\Res_0 \nabla|_\bD \in \End E_0.
$$
The relative weight filtration is characterized by the following properties:
\begin{enumerate}[(i)]

\item $N (M_j E_0) \subseteq M_{j-2} (E_0)$ and $N(W_r E_0) \subseteq W_r E_0$ for each $j,r \in \Z$.

\item for each $r,k \in \Z$, $N^k : M_{r+k} W_r E_0 \to M_{r-k} W_m E_0$ induces an isomorphism
\begin{equation}
\label{eqn:wt_iso}
\Gr^M_{r+k} \Gr^W_r E_0 \overset{\simeq}{\longrightarrow} \Gr^M_{r-k} \Gr^W_r E_0.
\end{equation}
See \cite{steenbrink-zucker} for more details and also \cite[\S7]{hain:tokyo} for an exposition with some relevant examples. Note that the filtration $M_\bdot$ is called the {\em monodromy weight filtration} when $\bE$ is a variation of Hodge structure.

\end{enumerate}
\end{enumerate}

These data determine, for each non-zero tangent vector $\vv \in T_0 \bD$, a canonical limit mixed Hodge structure which we denote by $E_\vv$. It is defined as follows. Choose a holomorphic parameter $t$ in $\bD$ such that $\vv = \del/\del t$. First observe that each weight graded quotient $\Gr^W_m \bE$ is a polarized variation of Hodge structure in the sense of Schmid \cite{schmid}. 

Standard  ODE implies that there is a unique trivialization
\begin{equation}
\label{eqn:trivialization}
\cE|_\bD \cong \bD \times E_0
\end{equation}
of the restriction of $\cE$ to $\bD$ in which the connection is
$$
\nabla = d - N\frac{dt}{t}.
$$
(This trivialization depends on the choice of parameter $t$.) The flat sections are of the form
$$
e(t) = t^N e \text{ where } e\in E_0.
$$
Identify the fiber $E_t$ of $\cE$ over $\alpha(t) \in \bD'$ with $E_0$ using the trivialization (\ref{eqn:trivialization}) above. This also allows us to regard $N$ as an endomorphism of $E_t$. With these identifications, we have
$$
h_t = \exp(2\pi i N)
$$
where $h_t : E_t \to E_t$ is the local monodromy operator.

The identification of the fiber $E_\lambda$ over $\lambda\in \bD'$ with $E_0$ defines a $\Z$ structure on $E_0$. Denote it by $E_{\lambda\vv,\Z}$. The limit mixed Hodge structure $E_{\lambda\vv}$ is the trifiltered vector space
$$
(E_0,F^\bdot,M_\bdot,W_\bdot)
$$
endowed with the lattice $E_{\lambda\vv,\Z}$. Schmid's result implies that each of its $W_\bdot$ graded quotients comprise a MHS with weight filtration $M_\bdot$.\footnote{One needs to check that $M_\bdot$ is defined over $\Q$ for each $
\lambda \in \bD'$. This follows from the fact that $h_\lambda$ is an automorphism of $E_{\Q,\lambda}$. See \cite{steenbrink-zucker}.} It then follows that for each $\lambda\in \bD'$, $E_{\lambda\vv}$ is a MHS with the filtration
$$
0 = W_m E_{\lambda\vv} \subseteq W_{m+1} E_{\lambda\vv} \subseteq \cdots \subseteq W_n E_{\lambda\vv} =  E_{\lambda\vv}
$$
by sub MHS. Although the identification of $E_\lambda$ with $E_0$ depends on the choice of parameter $t$, the $\Z$-structure (and thus the MHS) depends only on the tangent vector $\lambda\vv = \lambda\del/\del t \in T_0 \bD$.

In fact, the $\bE_{\lambda\vv}$ are defined for all $\lambda\neq 0$ and form a nilpotent orbit of MHS over the punctured tangent space $T_0'\bD := T_0 \bD - \{0\}$.

Griffiths transversality (\ref{eqn:griff-trans}) implies that $N (F^p E_0) \subseteq F^{p-1}E_0$. Since $N(M_rE_0) \subset M_{r-2}E_0$, we have:

\begin{proposition}
For each $\vv \in T_0'\bD$, the monodromy logarithm $N : E_0 \to E_0$ induces a morphism $N : E_\vv \to E_\vv(-1)$ of MHS.
\end{proposition}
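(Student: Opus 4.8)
The plan is to reduce the statement to three ingredients, two of which have already been recorded just above. Recall first that, for a $\Q$-MHS $H$ with weight filtration $W_\bdot$ and Hodge filtration $F^\bdot$, the Tate twist $H(-1)$ has the same underlying vector space, with weight filtration $W_j(H(-1)) = W_{j-2}(H)$ and Hodge filtration $F^p(H(-1)) = F^{p-1}(H)$. The limit object $E_\vv$ is a MHS whose weight filtration is $M_\bdot$ and whose Hodge filtration is $F^\bdot$. Hence a $\Q$-linear endomorphism $\psi$ of the underlying space of $E_\vv$ defines a morphism of MHS $E_\vv \to E_\vv(-1)$ precisely when $\psi(M_j E_0) \subseteq M_{j-2}E_0$ and $\psi(F^p E_0) \subseteq F^{p-1}E_0$ for all $j$ and $p$ (together with the rationality discussed below).

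The point that needs care, and which I would settle first, is the rational structure: $N$ itself is \emph{not} rational with respect to the lattice $E_{\vv,\Z}$, but $2\pi i\, N$ is. Indeed, the local monodromy operator $h = \exp(2\pi i\, N)$ is an automorphism of the $\Q$-local system underlying $\bE$, and by the standing unipotency hypothesis it is unipotent, so $\log h = 2\pi i\, N$ is a nilpotent $\Q$-endomorphism of $E_{\vv,\Q}$. Consequently $e \mapsto (2\pi i\, N)(e)\otimes (2\pi i)^{-1}$ is a well-defined $\Q$-linear map $E_{\vv,\Q} \to E_{\vv,\Q}\otimes \Q(-1)$; this is the morphism $N : E_\vv \to E_\vv(-1)$, and the Tate twist is exactly the bookkeeping device that absorbs the factor $2\pi i$. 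The same computation applies with $\vv$ replaced by any $\lambda\vv$, $\lambda\neq 0$, since $h$, and hence $\log h$, preserves each of the lattices $E_{\lambda\vv,\Z}$.

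It then remains only to invoke the two filtration compatibilities, both already noted. Property~(i) of the relative weight filtration gives $N(M_j E_0)\subseteq M_{j-2}E_0$, which is exactly compatibility of $N$ with the weight filtrations of $E_\vv$ and $E_\vv(-1)$; Griffiths transversality~(\ref{eqn:griff-trans}), as observed just before the statement, gives $N(F^p E_0)\subseteq F^{p-1}E_0 = F^p(E_\vv(-1))$, the compatibility with the Hodge filtrations. Together with the rationality above, this shows that $N$ is a morphism of mixed Hodge structures $E_\vv \to E_\vv(-1)$, and passing to $M$-graded quotients it induces, for each $j$, a morphism of pure weight-$j$ Hodge structures $\Gr^M_j E_\vv \to (\Gr^M_{j-2}E_\vv)(-1)$. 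Since all of the Hodge-theoretic input — the existence of the relative weight filtration, Griffiths transversality on the canonical extension, and the fact that $E_\vv$ is a MHS with weight filtration $M_\bdot$ — has already been established, there is no substantive obstacle; the only subtlety worth flagging is precisely the one above, that it is $\exp(2\pi i\, N)$ and not $N$ that preserves the rational (and integral) structure, which is why the target must be Tate-twisted by $\Q(-1)$.
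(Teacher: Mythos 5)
Your proof is correct and follows essentially the same route as the paper, which deduces the proposition directly from the two facts recorded just before it: Griffiths transversality for the canonical extension gives $N(F^pE_0)\subseteq F^{p-1}E_0$, and property (i) of the relative weight filtration gives $N(M_jE_0)\subseteq M_{j-2}E_0$. Your additional discussion of the rational structure --- that $2\pi i\,N=\log h$ is rational so the Tate twist absorbs the $2\pi i$ --- is a correct elaboration of a point the paper leaves implicit (cf.\ its footnote on $M_\bdot$ being defined over $\Q$), not a different argument.
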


\subsection{Nilpotent orbits}
\label{sec:nilp-orbits}

The 1-variable nilpotent orbit above is the restriction of a several variable nilpotent orbit. Such nilpotent orbits of limit MHS approximate admissible variations of MHS near a point of the boundary divisor $D$ and are useful for understanding the boundary behaviour of admissible variations.

Suppose that $U = \bD^n$ is a polydisk neighbourhood in $\Sbar$ of a point of $D$. Suppose that the intersection of $D$ with $U$ is defined by $t_1t_2\dots t_k = 0$, where $t_j$ is the coordinate in the $j$th disk. Let $D_j$ be the component of $D\cap U$ defined by $t_j = 0$. Denote the fiber of $\cE$ over the origin of $U$ by $E_0$. Set
$$
N_j = -\Res_{D_j,0} \nabla \in \End E_0
$$
Then $N_1,\dots,N_k$ are commuting nilpotent endomorphisms of $E_0$. There is a trivialization
\begin{equation}
\label{eqn:local-iso}
\cE|_U \cong U\times E_0
\end{equation}
of the restriction of $\cE$ to $U$ in which
\begin{enumerate}

\item $\nabla$ is given by
$$
\nabla = d - \sum_{j=1}^k N_j \frac{dt_j}{t_j}
$$

\item The isomorphism (\ref{eqn:local-iso}) restricts to an isomorphism of $W_m \cE|_U$ with $U\times W_m E_0$.

\end{enumerate}
Define a Hodge filtration on $U\times E_0$ by $F^p(U\times E_0) = U \times F^p E_0$. The fiber of $F^p \cE$ over $t\in U$ corresponds to a subspace $F^p_t E_0$ of $E_0$ via the isomorphism (\ref{eqn:local-iso}). Since both Hodge filtrations of the bundle $U\times E_0$ are holomorphic and agree at the origin, we have:

\begin{proposition}
\label{prop:nilp_orbit}
There is a holomorphic mapping $\Phi_\bE : U \to \Aut E_0$ with the property that:
\begin{enumerate}[(a)]

\item $\Phi_\bE(0) = \id_{E_0}$,

\item $\Phi_\bE(t)$ preserves $W_\bdot$ and the induced automorphism of $\Gr^W_0 E_0$ is the identity,

\item the original Hodge filtration on $\cE|_U$ is related to the Hodge filtration on $U\times E_0$ by
$$
F^p_t E_0 = \Phi_\bE(t)(F^p E_0).
$$
\end{enumerate}
\end{proposition}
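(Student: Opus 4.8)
The plan is to convert the statement into a soft lifting problem for a holomorphic map into a flag manifold. First I would work entirely inside the trivialization (\ref{eqn:local-iso}). Under it the holomorphic sub-bundles $F^p\cE|_U\subseteq\cE|_U$ become holomorphic sub-bundles of the trivial bundle $U\times E_0$, i.e.\ a holomorphically varying family $t\mapsto F^\bdot_tE_0$ of filtrations of the single vector space $E_0$, and $F^\bdot_0E_0=F^\bdot E_0$ precisely because the two Hodge filtrations of $U\times E_0$ agree at the origin. Since $\bE$ is a variation of mixed Hodge structure, $F^\bdot\cE$, $W_\bdot\cE$ and the Hodge filtration induced on each $\Gr^W_m\cE$ are filtrations by holomorphic sub-bundles with locally free quotients over all of $\Sbar$; consequently $\dim F^p_tE_0$ and $\dim\Gr_F^p\Gr^W_mE_0$ are independent of $t\in U$, and the filtration that $F^\bdot_tE_0$ induces on $\Gr^W_0E_0$ is also independent of $t$. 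In the situation that matters here --- the extensions $0\to\bV\to\bE\to\Z_S\to 0$ attached to normal functions --- one has $\Gr^W_0\bE=\Z_S$, so this last filtration is simply the trivial one.

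Next I would bring in the relevant groups. Let $G\subseteq\Aut(E_0)$ be the complex algebraic group of automorphisms that preserve every $W_mE_0$ and induce the identity on $\Gr^W_0E_0$, and let $P\subseteq G$ be the closed subgroup stabilizing $F^\bdot E_0$. Two filtrations of $E_0$ with the same dimensions relative to $W_\bdot$ and inducing the same filtration on $\Gr^W_0E_0$ lie in a single $G$-orbit --- this is the standard fact that filtrations with a prescribed associated graded relative to $W_\bdot$ form one orbit of the unipotent radical of $\Aut(E_0,W_\bdot)$. By the previous paragraph this applies to $F^\bdot E_0$ and every $F^\bdot_tE_0$, so $t\mapsto F^\bdot_tE_0$ defines a holomorphic map $f\colon U\to G/P$ sending $0$ to the base point, where $G/P$ is a complex manifold and $G\to G/P$ a holomorphic principal $P$-bundle.

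Then I would lift $f$ to a holomorphic map $\Phi_\bE\colon U\to G$ with $\Phi_\bE(0)=\id$; any such lift satisfies (a), (b) and (c) tautologically. The pullback $f^\ast(G\to G/P)$ is a holomorphic principal $P$-bundle over the polydisk $U=\bD^n$; since $U$ is Stein and contractible it is topologically trivial, hence holomorphically trivial by Grauert's Oka principle, so it has a holomorphic section, which is the desired lift, normalized at $0$ by right translation by a constant element of $P$. If one prefers to avoid Grauert, the lift can instead be produced over a possibly smaller polydisk using the big cell of $G/P$ about the base point: for $t$ near $0$ the point $f(t)$ lies in this big cell, over which $G\to G/P$ carries the evident holomorphic section given by the opposite unipotent subgroup, and shrinking $U$ costs nothing since the whole discussion is local along $D$.

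The step that requires genuine care, as opposed to being formal, is the orbit claim --- equivalently, the assertion that the $F^\bdot_tE_0$ induce a $t$-independent filtration on $\Gr^W_0E_0$, which is what makes condition (b) attainable. For this paper it is immediate, since $\Gr^W_0\bE=\Z_S$ has trivial Hodge filtration, and one may take ``$\Gr^W_0\bE$ is a constant variation'' as a standing hypothesis on the variations considered; once this is granted, the constancy of the Hodge numbers along $\Sbar$ gives the orbit statement and the remaining passage to $G/P$ together with the lifting is pure Oka theory. Everything else --- holomorphy of $f$, the principal-bundle structure of $G\to G/P$, and triviality of the pullback bundle over a polydisk --- is routine.
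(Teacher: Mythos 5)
Your argument is sound, but note that the paper does not actually prove this proposition: it is asserted as an immediate consequence of the preceding sentence (``both Hodge filtrations of $U\times E_0$ are holomorphic and agree at the origin''), so your flag-variety-plus-lifting argument supplies a proof where the paper offers only an assertion. What your route buys is a precise identification of where condition (b) comes from, namely the $t$-independence of the filtration that $F^\bdot_t E_0$ induces on $\Gr^W_0 E_0$, and you rightly observe that in the paper's applications this is automatic because $\Gr^W_0\bE = \Z_S$. Two points deserve a word. First, the fact you cite for the orbit claim (transitivity of the unipotent radical of $\Aut(E_0,W_\bdot)$ on filtrations with prescribed associated graded) presupposes that the induced filtrations agree on \emph{every} $\Gr^W_m$, whereas your $F^\bdot_t E_0$ induce a fixed filtration only on $\Gr^W_0$ and merely have constant Hodge numbers elsewhere. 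The statement you need --- that $G$ acts transitively on filtrations with fixed relative position with respect to $W_\bdot$ and fixed induced filtration on $\Gr^W_0$ --- is still true and elementary: take any $g\in\Aut(E_0,W_\bdot)$ carrying $F^\bdot_0$ to $F^\bdot_t$ (same intersection dimensions), note it induces an automorphism of $(\Gr^W_0E_0,\bar F)$, and correct it by lifting that automorphism into the stabilizer of $(W_\bdot,F^\bdot_0)$ via a bigrading splitting both filtrations; but the citation as written does not cover this. Second, the constancy of $\dim\Gr_F^p\Gr^W_m E_0$ at points of $D$ uses that $F^\bdot\cE$ induces subbundle filtrations on the canonical extensions of the $\Gr^W_m\bE$; this is a property of admissibility (it is what makes the $W$-graded quotients of the limit into Schmid limits), not something listed in the paper's abbreviated definition, so invoke it as such. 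With these repairs your proof, via Grauert or the local-section variant with a harmless shrinking of $U$, is complete.
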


Suppose that $x=(0,\dots,0,t_{k+1},\dots,t_n) \in \cap D_j$. For each tangent vector
$$
\vv = \sum_{j=1}^n \lambda_j \frac{\del}{\del t_j} \in T_x U
$$
that is not tangent to any component of $D$ --- equivalently, $\lambda_j \neq 0$ for $1\le j \le k$ --- we can restrict $\bE$ to the arc $t\mapsto (\lambda_1 t_1,\dots, \lambda_k t_k,t_{k+1},\dots,t_n)$ to obtain a limit MHS $E_\vv$ whose weight filtration is the relative weight filtration of 
$$
N = N_1 + N_1 + \dots + N_k.
$$
The $E_\vv$ form a nilpotent orbit of MHS over
$$
T_x U - \bigcup_{j=1}^k T_x D_j
$$
that is constant on the cosets of $T_x(\bigcap_{j=1}^k D_j)$. It therefore descends to a nilpotent orbit of MHS on the normal bundle of the intersection of the $D_j$ with the normal bundles of the $D_j$ removed. We shall denote this nilpotent orbit by $\bE^\nilp$. It is an admissible variation of MHS. The case $k=1$ plays an important role in the proof of Theorem~\ref{thm:max-rk}.

Nilpotent orbits are themselves admissible VMHS. They behave well under subquotients of admissible variations of MHS. In particular, the nilpotent orbit associated to the subquotient $\Gr^W_r \bE$ is $\Gr^W_r\bE^\nilp$.

\subsection{Admissible normal functions}
\label{sec:anf}

Suppose that $\bV$ is a polarizable variation of Hodge structure of negative weight over a smooth variety $S$. An admissible normal function is a section $\nu$ of $J(\bV)$ that corresponds to an admissible variation of MHS $\bE$ over $S$ that is an extension of $\Z_S(0)$ by $\bV$.

\begin{proposition}
\label{prop:anf}
Normal functions associated to families of homologically trivial cycles are admissible.
\end{proposition}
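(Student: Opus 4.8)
The plan is to verify the four defining conditions of an admissible variation of mixed Hodge structure from Section~\ref{sec:avmhs} for the variation $\bE_Z$ constructed in Section~\ref{sec:families_cycles}. Recall the set-up: $f : X \to S$ is a family of smooth projective varieties over a smooth variety $S = \Sbar - D$, $Z$ is a relative $d$-cycle each of whose fibers is homologically trivial, $\bV$ is the weight $-1$ polarizable variation with fiber $H_{2d+1}(X_s)(-d)$, and $\bE_Z$ is the extension $0 \to \bV \to \bE_Z \to \Z_S \to 0$ obtained by pulling back the long exact homology sequence of $(X,|Z|)$ along $cl_Z$ and twisting by $\Z(-d)$, carrying on each fiber Deligne's mixed Hodge structure on $H_{2d+1}(X_s,|Z_s|)(-d)$. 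Three of the conditions are immediate: condition (b), a mixed Hodge structure on each fiber, is exactly the content of Section~\ref{sec:ext_cycle}; condition (a), Griffiths transversality of the Gauss--Manin connection on $\cE_Z = \bE_Z\otimes\cO_S$, is classical and was already invoked in Section~\ref{sec:families_cycles} following Griffiths \cite{griffiths}; and condition (c), polarizability of the weight graded quotients, holds because $\bE_Z$ is a two-step extension with $\Gr^W_\bullet \bE_Z \cong \bV\oplus\Z_S(0)$, both summands being polarizable ($\bV$ as a Tate twist of a summand of the cohomology of a smooth projective family, $\Z_S(0)$ trivially).

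The substantial point is condition (d): along every holomorphically embedded arc $\alpha : \bD \to \Sbar$ with $\alpha^{-1}(D) = \{0\}$, the relative weight filtration of $N = -\Res_0(\alpha^\ast\nabla)$ with respect to $W_\bullet$ must exist on the central fiber. I would reduce this to the one-variable degeneration theorem of Steenbrink and Zucker \cite{steenbrink-zucker}. The key observation is that the relative weight filtration, when it exists, is determined by the restriction of $\bE_Z$ to $\alpha(\bD^\ast)$ together with $N$ (through properties (d)(i)--(ii)), so one is free to replace the pulled-back family of pairs $\alpha^\ast(X,|Z|)\to\bD^\ast$ by any model agreeing with it over $\bD^\ast$. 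Compactifying and applying Hironaka produces a projective one-parameter degeneration of pairs in which the total space is smooth and the union of the central fiber with the closure of $|Z|$ is a normal crossing divisor; the main result of Steenbrink--Zucker (equivalently, M.\ Saito's theory \cite{saito}, under which $Rf_\ast$ of the relative constant sheaf is a mixed Hodge module) then shows that the relative weight filtration of $N$ with respect to $W_\bullet$ exists and underlies a limit mixed Hodge structure. Transporting this back along $\alpha$ yields condition (d) for $\bE_Z$.

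I expect the only genuinely delicate step to be the bookkeeping in this last reduction: one must check that the resolution bringing the central fiber and $|Z|$ into normal crossing position is an isomorphism over $\bD^\ast$, so that the variation there --- and hence $N$ and $W_\bullet$ --- is unchanged, and that the Steenbrink--Zucker hypotheses genuinely hold afterwards; here it is convenient that the paper assumes unipotent local monodromy throughout, so no finite base change is needed. An alternative, essentially equivalent, route avoids checking condition (d) directly: the diagram in Section~\ref{sec:ext_cycle} realizes $\bE_Z(d)$ as a sub-local-system of $s\mapsto H_{2d+1}(X_s,|Z_s|)$ with the induced Hodge and weight filtrations, so admissibility of $\bE_Z$ would follow from admissibility of the relative-homology variation --- itself a consequence of \cite{steenbrink-zucker} and \cite{saito} --- together with the stability of admissibility under Tate twists and passage to sub-variations \cite{kashiwara}.
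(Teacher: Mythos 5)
Your core reduction is essentially the argument the paper gives: conditions (a)--(c) are formal, and the substantive point --- the existence of the relative weight filtration (and of the extended Hodge filtration) along one-parameter degenerations --- is delegated to the main result of \cite{steenbrink-zucker} or to Saito's theory \cite{saito}. The paper phrases this via Kashiwara's curve test \cite{kashiwara} and applies \cite{steenbrink-zucker} to the complement family $(X-|Z|)|_U \to U$ (Lefschetz duality identifies $H_{2d+1}(X_s,|Z_s|)$ with cohomology of the complement), while you resolve the family of pairs over an arc; your second route (sub-variation of the relative-homology variation plus Kashiwara's stability results) is a repackaging of the same ingredients. So the strategy is not in question.

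There is, however, one step of the paper's proof that your write-up silently skips, and as written it is a genuine gap: nothing in the hypotheses of Section~\ref{sec:families_cycles} guarantees that $(X,|Z|)\to S$ is topologically locally trivial over all of $S$. The support $|Z|$ can fail to be equisingular over a proper closed subset $B$ of $S$; there the groups $H_{2d+1}(X_s,|Z_s|)$ jump, and the identification of $\bE_Z$ with the relative-homology variation --- which you use to declare (b) and (c) immediate and, more seriously, to compute the limit along a test arc --- is not available. Your shrinking/resolution argument handles arcs whose generic point lies in the good locus (the preimage of $B$ in the disk is then discrete and can be avoided), but condition (d) must hold for every arc with $\alpha^{-1}(D)=\{0\}$, including arcs contained in the closure of $B$; the same problem afflicts your alternative route, since the relative-homology variation only exists over the good locus. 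The paper deals with this by first restricting to a Zariski open $U\subseteq S$ over which $(X,|Z|)\to S$ is topologically locally trivial, proving admissibility there, and then invoking the result of \cite[\S7]{hain:msri} to conclude that $\bE$ is admissible over all of $S$; some such extra ingredient is needed in your argument. A smaller point: item (3) of the definition in Section~\ref{sec:avmhs} --- that the Hodge filtration extends to holomorphic sub-bundles of Deligne's canonical extension over $\Sbar$ --- is itself part of what must be verified, not a consequence of Griffiths transversality over $S$; it is supplied by \cite{steenbrink-zucker} and \cite{saito} just as the relative weight filtration is, and your proof should say so explicitly.
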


\begin{proof}
Suppose that we are in the setting of Section~\ref{sec:families_cycles}. Let $\bE$ be the extension $\bE_Z$ associated to $Z$. There is a Zariski open subset $U$ of $S$ on which the map $(X,|Z|) \to S$ is topologically locally trivial.  To prove $\bE$ is an admissible variation of MHS over $U$ it suffices, by \cite{kashiwara}, to show that its restriction to each curve in $S$ is admissible. This follows from the main result of \cite{steenbrink-zucker} applied to $(X-|Z|)|_U \to U$. It also follows by appealing to Saito's theory of mixed Hodge modules \cite{saito}. The result of \cite[\S7]{hain:msri} implies that $\bE$ is an admissible variation over $S$.
\end{proof}

\section{Codimension 1 degenerations of normal functions}
\label{sec:boundary_normal}

We now study codimension 1 degenerations of admissible normal functions in the case where $\bV$ is an admissible variation of Hodge structure of weight $-1$. In this case, $J(\bV)$ is a holomorphic family of compact complex tori.

\subsection{Setup}
\label{sec:setup}

Suppose that $\Sbar$ is a smooth variety (not necessarily compact) and that $\Delta$ is a connected smooth divisor in $\Sbar$. Set $S=\Sbar-\Delta$. Suppose that $\bV$ is a polarizable variation of Hodge structure of weight $-1$ over $S$ and that $\nu : S \to J(\bV)$ is an admissible normal function. Denote the corresponding variation of MHS by $\bE$. We shall assume that $\bV_\Z$ is torsion free, a condition that is satisfied by all variations in Part~\ref{part:higher-genus}.

Denote the normal bundle of $\Delta$ in $\Sbar$ by $L$ and the complement of its 0-section, which we identify with $\Delta$, by $L'$. Let $\pi : L \to \Delta$ be the projection and $\pi'$ its restriction to $L'$. By the discussion in Section~\ref{sec:nilp-orbits}, there are nilpotent orbits of MHS $\bE^\nilp$ and $\bV^\nilp$ over $L'$ and an extension
\begin{equation}
\label{eqn:nilpt-extn-Delta}
0 \to \bV^\nilp \to \bE^\nilp \to \Q\ee \to 0
\end{equation}
of variations over $L'$. Both $\bV^\nilp$ and $\bE^\nilp$ are filtered by subvariations $M_j\bV^\nilp$ and $M_j\bE^\nilp$.

Even though $\bV^\nilp$ is not a variation of Hodge structure of pure weight $-1$ (except when $N=0$), we can still define $J(\bV^\nilp)$ to be the (not necessarily topologically locally trivial) family of complex tori over $L'$ whose fiber over $\vv \in L_x'$ is
$$
J(V_\vv) := V_{x,\C}/(V_{\vv,\Z} + F^0 V_x).
$$
Schmid's result \cite[Thm.~4.9]{schmid} implies that there is a neighbourhood $U$ of the zero section of $L$ such that $\bV^\nilp$ is a polarizable variation of HS of weight $-1$ over $U' := U - \Delta$. Consequently, $J(\bV^\nilp)$ is a family of compact complex tori over $U'$. Denote by $\nu^\nilp : U' \to J(\bV^\nilp)$ the normal function section of it that corresponds to the restriction of (\ref{eqn:nilpt-extn-Delta}) to $U'$.

Since the monodromy logarithm $N : \bV^\nilp \to \bV^\nilp(-1)$ is a morphism, $\ker N$ is a subvariation of $M_{-1}\bV^\nilp$. Since it has trivial monodromy, it is constant on each $L_x'$ and thus extends to a variation over $L$, which we denote by $\bK$. Since it is constant on each $L_x$, it is the pullback along $\pi$ of its restriction $\bK_0$ to $\Delta$.

\subsection{The N\'eron model associated to a nilpotent orbit}

Here we give the construction of the N\'eron model for $\bV^\nilp$ on the punctured neighbourhood $U'$ of the zero section of $L$. It is more general than the construction given in \cite{ggk} as we allow the base to have dimension $>1$, but it is less general in that we construct N\'eron models only for nilpotent orbits. Working with nilpotent orbits is sufficient for our purposes and simplifies the construction as nilpotent orbits are filtered by their relative weight filtrations.

Regard $J(\bV_\R)$ as a local system of abelian groups over $L'$.

\begin{lemma}
\label{lem:obstruction}
For each $x\in \Delta$, there is an exact sequence
$$
0 \to J(\bK_\R)|_{L_x}  \to H^0(L_x',J(\bV_\R)) \to G_x \to 0,
$$
where $G_x$ is the finite abelian group
$$
G_x := \text{torsion subgroup of }H_0(L_x',\bV_\Z|_{L_x'}).
$$
In more concrete terms
$$
G_x = \big(V_{\vv,\Z} \cap (h-1)V_{\vv,\Q} \big)/(h - 1) V_{\vv,\Z},
$$
where $\vv \in L_x'$ and $h : V_{\vv,\Z} \to V_{\vv,\Z}$ is the local monodromy operator.
\end{lemma}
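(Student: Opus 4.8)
The plan is to reduce the whole statement to the cohomology of a local system on a circle. Since $L_x$ is a complex line, $L_x'$ is homotopy equivalent to a circle, so $\pi_1(L_x',\vv)\cong\Z$ is generated by a small loop around the origin; with respect to this generator the restrictions to $L_x'$ of $\bV_\Z$, $\bV_\R$ and $J(\bV_\R)=\bV_\R/\bV_\Z$ are the local systems with fibres $V_\Z:=V_{\vv,\Z}$, $V_\R$ and $V_\R/V_\Z$ and monodromy operator $h$. Because $h$ is unipotent, $h=\exp(2\pi i N)$, so $\ker(h-1)=\ker N$ on $V$; hence $\bK$, being constant along $L_x$, has fibre $\ker N=\ker(h-1)$ there with integral lattice $V_\Z^h:=V_\Z\cap\ker(h-1)$, and therefore $J(\bK_\R)|_{L_x}=V_\R^h/V_\Z^h$ where $V_\R^h=\ker(h-1)\otimes\R$.

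First I would take the long exact cohomology sequence over $L_x'$ of the short exact sequence of sheaves of abelian groups $0\to\bV_\Z\to\bV_\R\to J(\bV_\R)\to0$. For a local system $\bL$ on a circle with monodromy $T$ on the fibre one has $H^0(L_x',\bL)=\ker(T-1)$, $H^1(L_x',\bL)=\coker(T-1)$ (group cohomology of $\Z$), and likewise $H_0(L_x',\bL)=\coker(T-1)$, with all higher (co)homology vanishing. Inserting these and using that $V_\Z\hookrightarrow V_\R$ is injective (as $\bV_\Z$ is torsion free), the resulting six-term exact sequence collapses, after the standard extraction of images and cokernels, to
$$
0\to V_\R^h/V_\Z^h\to H^0(L_x',J(\bV_\R))\to \ker\big(V_\Z/(h-1)V_\Z\to V_\R/(h-1)V_\R\big)\to 0,
$$
in which the first map is induced by the inclusion $\bK\hookrightarrow\bV$ (every flat real section of $\bV$ over $L_x'$ is $h$-invariant, hence a section of $\bK$). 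By the previous paragraph this is already the asserted exact sequence, with $G_x:=\ker\big(V_\Z/(h-1)V_\Z\to V_\R/(h-1)V_\R\big)$.

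It then remains only to identify this $G_x$ with the two stated descriptions. Since $h-1$ is defined over $\Q$ (indeed over $\Z$) and $\R$ is flat over $\Q$, the map $V_\Q/(h-1)V_\Q\to V_\R/(h-1)V_\R$ is injective, so $G_x$ equals the kernel of $V_\Z/(h-1)V_\Z\to V_\Q/(h-1)V_\Q$, which is $\big(V_\Z\cap(h-1)V_\Q\big)/(h-1)V_\Z$; and since $V_\Z$ is finitely generated and torsion free, a class in $V_\Z/(h-1)V_\Z$ lies in this kernel precisely when it is torsion. Thus $G_x$ is the torsion subgroup of $V_\Z/(h-1)V_\Z=H_0(L_x',\bV_\Z|_{L_x'})$, and in particular finite. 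I do not expect a genuine obstacle here: the argument is routine homological bookkeeping once the circle model is fixed, and the only points that require care are keeping the $\Z$-, $\Q$- and $\R$-structures distinct — so that the left-hand term comes out as $J(\bK_\R)$ rather than merely $V_\R^h/V_\Z^h$ and the right-hand term as the torsion group — and invoking the rationality of $h-1$ at the one step where it is used.
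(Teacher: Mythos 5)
Your proposal is correct and follows essentially the same route as the paper: both take the long exact cohomology sequence of $0\to\bV_\Z\to\bV_\R\to J(\bV_\R)\to 0$ over $L_x'$, identify $H^0(L_x',\bV_\kk)$ with $K_\kk$, and identify the image in $H^1(L_x',\bV_\Z)$ with the torsion of $H_0(L_x',\bV_\Z)$. The only cosmetic difference is that you compute $H^1$ and $H_0$ directly as $\coker(h-1)$ via group cohomology of $\Z$ and use a rationality/flatness argument for the torsion identification, where the paper invokes Poincar\'e duality $H^1(L_x',\bV_\Z)\cong H_0(L_x',\bV_\Z)$ and torsion-freeness of $H^1(L_x',\bV_\R)$.
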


\begin{proof}
Denote $\bV\otimes_\Z \kk$ by $\bV_\kk$, where $\kk$ is a subring of $\R$. Since $V_\Z$ is torsion free, the long exact sequence of cohomology associated to the exact sequence
$$
0 \to \bV_\Z \to \cV_\R \to J(\bV_\R) \to 0
$$
of local systems over $L_x'$
$$
0 \to H^0(L_x',\bV_\Z) \to H^0(L_x',\cV_\R) \to H^0(L_x',J(\bV_\R)) \to H^1(L_x',\bV_\Z) \to H^1(L_x',\bV_\R).
$$
Since $H^1(L_x',\bV_\R)$ is torsion free and since $H^0(L_x',\bV_\kk) = K_\kk$, the sequence
$$
0 \to K_\Z \to K_\R \to H^0(L_x',J(\bV_\R)) \to H^1(L_x',\bV_\Z)^\tor \to 0
$$
is exact, where $K_\kk$ denotes the fiber of $\bK_\kk$ over $x\in \Delta$. Since $L_x'$ has the homotopy type of a circle, we have the Poincar\'e duality isomorphism $H^1(L_x',\bV_\Z) \cong H_0(L_x',\bV_\Z)$. It restricts to an isomorphism on torsion subgroups. The results now follows as $H^0(L_x',J(\bK)) = K_\R/K_\Z$.
\end{proof}

The groups $G_x$ form a local system over $L$. Denote it by $\bG$. Denote the local system over $L$ whose restriction to $L_x$ is $H^0(L_x',\bV)$ by $\bT$. We have a short exact sequence of local systems
$$
0 \to \bK \to \bT \to \bG \to 0.
$$
All terms in this sequence are admissible variations of MHS over $L$. The quotient variation $\bG$ is Tate of weight 0. Define $J(\bT)$ by defining its fiber over $x\in \Delta$ to be
$$
J(T_x) = T_{x,\C}/(K_{x,\Z} + F^0 K_x).
$$
There is an exact sequence
$$
0 \to J(\bK) \to J(\bT) \to \bG \to 0
$$
over $L$. The inclusion $\bT \to \bV^\nilp$ induces a morphism
$$
J(\bT)|_{L'} \to J(\bV^\nilp)
$$
over $L'$. The N\'eron model $\Jtilde(\bV^\nilp)$ of $J(\bV^\nilp)$ is the family of abelian complex Lie groups over $U$ obtained by glueing $J(\bV^\nilp)|_{U'}$ to $J(\bT)|_U$ by identifying $J(\bT)|_{U'}$. In other words
$$
\begin{tikzcd}
J(\bT)|_{U'} \ar[r] \ar[d] & J(\bV^\nilp)|_{U'} \ar[d] \\
J(\bT)|_U \ar[r] &  \Jtilde(\bV^\nilp)
\end{tikzcd}
$$
is a pushout square. Its restriction to $U'$ is $J(\bV^\nilp)|_{U'}$ and its fiber over the zero section is $J(\bT)|_\Delta$. It is a {\em separated slit analytic space}. (See \cite[p.~308]{ggk} for the definition.) If $N^2 \neq 0$, $\Jtilde(\bV^\nilp)$ is not a manifold.

\begin{remark}
\label{rem:modular}
If $S$ is a modular curve and if $\bV$ is the local system $R^1 f_\ast \Z(1)$ associated to the universal elliptic curve $f : \cE \to S$, then $J(\bV)$ is the universal elliptic curve. The restriction of $\bV$ to a punctured neighbourhood $U'$ of a cusp $\Delta$ is a nilpotent orbit. So $J(\bV) = J(\bV^\nilp)$. The N\'eron model $\Jtilde(\bV)$ over $U$ is the usual N\'eron model.

This example illustrates another feature of the construction. The family $J(\bK)$ over $U$ is the trivial $\C^\times$ bundle and, with the correct choice of coordinates, the fiber of $J(\bV)$ over $q\in U'$ is $\C^\times/q^\Z$. In particular, the map $J(\bK) \to J(\bV)$ has infinite degree. In this case it is surjective as $N^2 = 0$.
\end{remark}

The following result is closely related to the main results of \cite{ggk} and \cite{schnell}.

\begin{theorem}
\label{thm:extension}
The normal function $\nu^\nilp$ lifts to a holomorphic section $\nutilde^\nilp$ of $J(\bT)$ over $U$ and this section projects to a holomorphic section $\nuhat^\nilp$ of $\Jtilde(\bV^\nilp)$ whose restriction to $U'$ is $\nu^\nilp$.
$$
\begin{tikzcd}
J(\bT) \ar[r]\ar[d] & \Jtilde(\bV^\nilp) \ar[d] \ar[r,hookleftarrow] & J(\bV^\nilp) \ar[d] \\
U \ar[r,equal] \ar[u, bend right=40, "\nutilde^\nilp"'] & U \ar[u, bend right=40, "\nuhat^\nilp"'] \ar[r,hookleftarrow] & U' \ar[u, bend right=40, "\nu^\nilp"']
\end{tikzcd}
$$
\end{theorem}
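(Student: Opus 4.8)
The plan is to prove the statement by an explicit local computation near a point $x$ of $\Delta$, using the fact that $\bE^\nilp$ is a nilpotent orbit---so that the description in Proposition~\ref{prop:nilp_orbit} is exact rather than approximate---together with the admissibility of $\bE$. Fix holomorphic coordinates $(t,s_2,\dots,s_n)$ on a polydisc $U$ centred at $x$ with $\Delta\cap U=\{t=0\}$, and pass to the canonical extension trivialisation $\cE^\nilp|_U\cong U\times E_0$ of Proposition~\ref{prop:nilp_orbit}, in which $\nabla=d-N\,dt/t$ and the Hodge filtration is $F^p_t=\Phi_\bE(t)(F^pE_0)$ with $\Phi_\bE$ holomorphic, $\Phi_\bE(0)=\id$, preserving $W_\bdot$ and inducing the identity on $\Gr^W_0E_0$. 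Choose a vector $e_{\Z,0}\in E_0$ in the limit integral lattice and a vector $e_{F,0}\in F^0E_0$, both projecting to $1\in\Q\ee$; then $e_\Z(t)=t^Ne_{\Z,0}$ is a multivalued integral flat lift of $1$, $e_F(t)=\Phi_\bE(t)e_{F,0}$ is a holomorphic lift of $1$ inside $F^0\cE^\nilp$, and
$$
v(t):=e_\Z(t)-e_F(t)=(t^N-1)e_{\Z,0}+w(t),\qquad w(t):=(e_{\Z,0}-e_{F,0})+(\id-\Phi_\bE(t))e_{F,0},
$$
is a multivalued lift of $\nu^\nilp$ taking values in $V_0=W_{-1}E_0$. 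Here $w$ is single valued and holomorphic on all of $U$, with $w(0)=e_{\Z,0}-e_{F,0}$, while the divergent part $(t^N-1)e_{\Z,0}=(\log t)Ne_{\Z,0}+\tfrac12(\log t)^2N^2e_{\Z,0}+\cdots$ is supported in $\im N$; the monodromy of $v$ under $t\mapsto e^{2\pi i}t$ is the integral flat section $t^N\delta_0$ with $\delta_0=(e^{2\pi iN}-1)e_{\Z,0}\in V_{\vv,\Z}$, which is why $[v]=\nu^\nilp$ is single valued over $U'$.

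The heart of the argument is that, read in the enlarged lattice $\bT_\Z$, the divergent part of $v$ disappears and $w$ carries the limit. Admissibility provides the relative weight filtration $M_\bdot$ on the limit mixed Hodge structure $E_\vv$ and makes $N\colon E_\vv\to E_\vv(-1)$ a morphism; since $e_{\Z,0}$ spans $\Gr^W_0E_\vv=\Z(0)$ one gets $Ne_{\Z,0}\in M_{-2}V_\vv$, and iterating this together with the isomorphisms (\ref{eqn:wt_iso}) one sees that the whole polynomial $(t^N-1)e_{\Z,0}$ lies, modulo $F^0\cV$ and the integral flat section $t^N\delta_0$, in $\bK_\C$. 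Thus $v$ descends to a section of $J(\bT)$ over $U'$ whose only residual integral indeterminacy is the class of $\delta_0$ in $V_{\vv,\Z}/(e^{2\pi iN}-1)V_{\vv,\Z}$, i.e.\ an element of the finite group $\bG$ of Lemma~\ref{lem:obstruction}; because $w$ is holomorphic on all of $U$ and $\Phi_\bE(0)=\id$, this section extends holomorphically across $\Delta$ to the desired $\nutilde^\nilp\in H^0(U,J(\bT))$. The local descriptions patch because their discrepancies are governed by the locally constant sheaves $\bK$, $\bT$ and $\bG$ over $L$, and $\nutilde^\nilp$ is holomorphic since $w$, $\Phi_\bE$ and the structure maps of $J(\bT)$ are. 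Finally the pushout square defining $\Jtilde(\bV^\nilp)$ yields $\nuhat^\nilp:=[\nutilde^\nilp]$ at once; its restriction to $U'$ is $\nu^\nilp$ by construction, and its holomorphy is inherited from that of $\nutilde^\nilp$ and the structure maps of the N\'eron model.

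The step I expect to be the main obstacle is this control of the $\log t$-polynomial divergence and the proof that it is absorbed by $\bT_\Z$ rather than producing genuine multivaluedness in $\Jtilde(\bV^\nilp)$. This is precisely where admissibility is indispensable: without the relative weight filtration and the isomorphisms (\ref{eqn:wt_iso}) the higher terms $(\log t)^kN^ke_{\Z,0}$, $k\ge2$, need not fit together into an integral flat section, and indeed for a non-admissible section a local lift can grow like $\exp(c(\log|t|)^2)$, so that no extension to the N\'eron model exists. Because $\bE^\nilp$ is a genuine nilpotent orbit, Schmid's theory applies directly and this divergence is completely explicit, which is what makes the limit---and hence the extension---computable; the remaining verifications (holomorphy, independence of the choices $e_{\Z,0}$, $e_{F,0}$, compatibility with the pushout) are then formal.
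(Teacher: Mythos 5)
Your setup — the canonical-extension trivialization, the multivalued lift $v(t)=t^N e_{\Z,0}-e_F(t)$, and the identification of the obstruction $\delta_0=(h-1)e_{\Z,0}$ with a class in $\bG$ — is the same as the paper's, but the decisive step is missing and, as written, false. The claim that the divergent polynomial $(t^N-1)e_{\Z,0}$ is harmless ``modulo $F^0\cV$ and the integral flat section $t^N\delta_0$'', so that the induced section of $J(\bT)$ extends across $\Delta$ simply because $w$ is holomorphic, fails already in the Tate degeneration of Remark~\ref{rem:modular}: there $V_\Z=\Z\ba\oplus\Z\bb$, $h:\bb\mapsto\bb+\ba$, $K=\Q\ba$, $G_x=0$, and an arbitrary integral lift has $(h-1)e_{\Z,0}=m\ba$. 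The class of your $v(t)$ in $J(\bK)$ (the trivial $\C^\times$-bundle) is then $t^m$ times a unit: it is single valued on $U'$, but has a zero or pole at $t=0$ whenever $m\neq0$, so it does \emph{not} extend to a section of $J(\bT)=J(\bK)$ over $U$ even though $w$ is holomorphic and the class in $\bG$ vanishes. Working modulo the $\Z$-span of the single flat section $t^N\delta_0$ is not the same as working modulo the lattice defining $J(\bT)$, and when $N^2\neq0$ the terms $(\log t)^kN^ke_{\Z,0}$ only make the divergence worse; nothing in your argument disposes of it.

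What is actually needed — and is the heart of the paper's proof — is to correct both lifts \emph{before} passing to the quotient. First one replaces $\ee_F$ by $\ee_F-\ee'$ with $\ee'\in F^0M_0\cV$ and $N\ee'=N\ee_F$ (possible because $N\ee_F\in F^{-1}M_{-2}\cV$ while the relative weight filtration gives $\im\{N:F^0M_0E\to V\}\supseteq F^{-1}M_{-2}V$), so that $\ee_F$ is horizontal and $\bK$-valued. Then, when the class of $\delta_0$ in $\bG$ vanishes, one replaces $\ee_\Z$ by $\ee_\Z-\bv_\Z$, where $\bv_\Z$ is an integral flat section with $(h-1)\bv_\Z=(h-1)\ee_\Z$, so that $\ee_\Z$ becomes a genuine section of $\bK_\Z$; only after these corrections is $\bv=\ee_\Z-\ee_F$ a $\bK_\C$-valued holomorphic section, constant along the fibres $L_x'$, and the extension across $\Delta$ becomes the triviality you were hoping for. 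When the class in $\bG$ is non-zero you cannot leave it as a ``residual indeterminacy'': the paper subtracts a constant section of $J(\bV)\cong J(\bV_\R)$ realizing that class (such sections lift to $\Jtilde(\bT)$, which is exactly why the component group $\bG$ appears in $0\to J(\bK)\to J(\bT)\to\bG\to0$) and reduces to the previous case. Without these corrections your argument shows only that $\nu^\nilp$ lifts to \emph{some} section of $J(\bT)$ over $U'$, not that a lift extends holomorphically over $U$, which is the content of the theorem.
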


\begin{proof}
We work locally in $L$. Let $X$ be a contractible open Stein subset of $\Delta$. Set
$$
Z = U \cap \pi^{-1}(X),
$$
where $\pi : L \to \Delta$ denotes the projection, and $Z' = Z\cap U'$.  Recall that the restriction of $\cE$ (and thus $\cV$) to each $L_x$ is trivial, so we can (and will) identify their fibers $E_z$ and $V_z$ over $z \in Z$ with $E_x$ and $V_x$, respectively, where $z\in L_x'\cap U$.

Exactness of $F^0$ and $M_0$ implies that there is a holomorphic section $\ee_F$ of $F^0 M_0 \cE$ over $Z$ that is constant (with respect to the trivialization) on each $L_x'$ and projects to the section $1$ of $\Gr^W_0\bE_\Z \cong \Z$. There is also a multivalued section $\ee_\Z$ of $\bE_\Z$ over $Z'$ that projects to $1\in H^0(Z',\Gr^W_0 \bE^\nilp$). Then 
$$
\bv := \ee_\Z - \ee_F
$$
is a multivalued holomorphic section of $\cV$ over $Z'$ which descends to $\nu^\nilp \in H^0(Z',J(\bV^\nilp))$. To prove the theorem, we will show that it lifts to a section of $J(\bT)$ over $Z'$.

Denote the local monodromy operator by $h$ and its logarithm by $N$. Since $h$ is unipotent, the endomorphisms $(h-1)$ and $N$ of $E_{z,\kk}$ satisfy
$$
\im N = \im (h-1) \text{ and } \ker N = \ker (h-1)
$$
when $\kk$ is a subfield of $\C$. The property (\ref{eqn:wt_iso}) of the relative weight filtration implies that
$$
\ker\{N : V_{z,\kk} \to V_{z,\kk}\}  \subseteq M_{-1} V_{z,\kk},\quad \im\{ (h-1) : V_{z,\kk} \to V_{z,\kk}\} \supseteq  M_{-2} V_{z,\kk}.
$$
Since $N$ is a morphism of type $(-1,-1)$,
$$
\im \{ N : F^0 M_0 E_z \to V_z\} \supseteq F^{-1} M_{-2} V_z.
$$
So $N \ee_F$ is a section of $F^{-1}M_{-2}\cV$. The properties of the relative weight filtration imply that there is a holomorphic section $\ee'$ of $F^0 M_0 \cV$ over $Z$ such that $N\ee_F = N\ee'$. It is constant (with respect to the trivialization) on each $L_x'\cap U$. By replacing $\ee_F$ by $\ee_F-\ee'$, we may assume that $N\ee_F = 0$. That is, the restriction of $\ee_F$ to each $L_x$ is also a horizontal section of $\cK$.

The class of $\nu^\nilp$ in $H^0(X,\bG)$ is represented by $(h-1)\ee_\Z(z)$ in
$$
G_x = \big(V_{z,\Z} \cap (h-1)V_{z,\Q} \big)/(h - 1) V_{z,\Z} \quad \text{for all } z \in L_x'\cap U.
$$
If it vanishes at one (and hence all) $x\in X$, there is a multivalued section $\bv_\Z$ of $\bV_\Z$ over $Z'$ such that $(h-1)\bv_\Z = (h-1)\ee_\Z$. By replacing $\ee_\Z$ by $\ee_\Z - \bv_\Z$, we see that if the class of $\nu^\nilp$ vanishes in $H^0(Z',\bG)$, then we may choose $\ee_\Z$ so that $(h-1)\ee_\Z = 0$. That is, $\ee_\Z$ extends to a section of $\bK_\Z$ over $Z$. Since $\ee_F$ extends to a holomorphic section of $\cK$ over $Z$, $\bv = \ee_\Z - \ee_F$ also extends to a holomorphic section of $\cK$ over $Z$. It is constant on each slice $L_x'\cap U$ and descends to a holomorphic section of $J(\bK)$ over $Z$ whose restriction to $Z'$ is a lift of $\nu^\nilp$. This completes the proof when the obstruction in $H^0(X,\bG)$ vanishes. We now prove the general case.

Every element of $H^0(U',J(\bV))$ extends to $U$. Since $J(\bV) \cong J(\bV_\R)$, every element of $G_x$ is the class of a constant section of $J(\bV)$ over $U\cap L_x'$. All such sections lift to sections of $\Jtilde(\bT)$ over $U\cap L_x$. So, if the class of $\nu^\nilp$ in $H^0(X,\bG)$ is non-zero, we can find a constant section $\nubar$ of $J(\bV)$ with the same class in $G_x$. Since the class of $\nu^\nilp-\nubar$ vanishes, by the previous paragraph, it lifts to a holomorphic section of $J(\bK)$ over $U\cap L_x$ for all $x \in X$. Since $\nubar$ lifts to a section of $\Jtilde(\bT)$, it follows that $\nu^\nilp$ lifts to a section $\nuhat^\nilp$ of $\Jtilde(\bT)$ over $Z$ whose restriction to $Z'$ projects to $\nu^\nilp$. 
\end{proof}

\begin{remark}
The N\'eron model $\Jtilde(\bV)$ of $J(\bV)$ can be constructed from $\Jtilde(\bV^\nilp)$ using the Nilpotent Orbit Theorem in the guise of Proposition~\ref{prop:nilp_orbit}. Just use the function $\Phi$ to perturb the Hodge filtration of $\bV^\nilp$ to obtain the Hodge filtration of $\bV$ to construct $J(\bV)$ from $J(\bV^\nilp)$. The restrictions of $\Jtilde(\bV)$ and $\Jtilde(\bV^\nilp)$ to $\Delta$ are equal. Similarly, the Nilpotent Orbit Theorem implies that the normal function section $\nu$ of $J(\bV)$ extends to a section $\nuhat$ of $\Jtilde(\bV)$ and that the restrictions of $\nuhat$ and $\nuhat^\nilp$ to $\Delta$ are equal.
\end{remark}

Since $\Sbar-S$ is a smooth connected divisor $\Delta$, the local system $\bV$ extends to a local system on $\Sbar$ if and only if $N=0$. In this case, the variation $\bV$ extends to a polarized variation of Hodge structure over $\Sbar$. The fiber over $x\in\Delta$ is the limit MHS associated to any tangent vector $\vv$ of $\Sbar$ at $x$ that is not tangent to $\Delta$. The limit does not depend on the choice of $\vv$.

\begin{corollary}[{\cite[Thm.~7.1]{hain:msri}}]
\label{cor:extn}
If $N=0$, then $\nu$ extends to a holomorphic section of $J(\bV)$ over $\Sbar$.
\end{corollary}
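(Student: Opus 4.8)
The plan is to localize near $\Delta$ and to show that admissibility forces the monodromy logarithm of $\bE$ to vanish, after which the extension of $\nu$ is formal. First I would pass to a small polydisk neighbourhood of a point of $\Delta$ with $\Delta$ cut out by a coordinate $t$; since $J(\bV)\to\Sbar$ is a separated family, any continuous extension of $\nu$ across $\Delta$ is unique, so it suffices to produce one locally and glue. As noted just before the statement, $N=0$ means $\bV$ has trivial local monodromy and extends to a polarizable variation of Hodge structure $\overline{\bV}$ of weight $-1$ over $\Sbar$, so $J(\bV)$ extends to a holomorphic family $J(\overline{\bV})$ of compact complex tori over $\Sbar$; it is into this family that I want to extend $\nu$.

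The heart of the argument is to show that the monodromy logarithm $N_{\bE}$ of $\bE$ is also $0$. Here admissibility enters. The weight filtration of $\bE$ has only two steps, $W_{-1}\bE=\bV\subseteq W_0\bE=\bE$, with $\Gr^W_{-1}\bE=\bV$ and $\Gr^W_0\bE=\Z(0)$. The operator $N_{\bE}$ induces $N=0$ on $\Gr^W_{-1}\bE$ and the zero endomorphism of the rank-one space $\Gr^W_0\bE$; the isomorphism property of the relative weight filtration $M_\bdot$ of the limit fibre $E_0$ (which exists because $\bE$ is admissible) then forces $\Gr^M_j\Gr^W_r E_0$ to vanish unless $(j,r)\in\{(-1,-1),(0,0)\}$, whence $M_{-2}E_0=0$ and $M_0E_0=E_0$. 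Since $N_{\bE}(M_0E_0)\subseteq M_{-2}E_0$ by the first defining property of $M_\bdot$, we conclude $N_{\bE}=0$. This is precisely the mechanism by which admissibility rules out the multivalued ``$\log$'' section of a constant family of elliptic curves, which does not extend across $t=0$.

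Once $N_{\bE}=0$, the local system $\bE_\Z$, its weight filtration, and the Hodge sub-bundles $F^p\cE$ all extend across $\Delta$ (the canonical extension of $\cE$ being the trivial one), realizing $\bE$ as the restriction to $S$ of an extension $0\to\overline{\bV}\to\overline{\bE}\to\Z_{\Sbar}(0)\to0$. Its fibre over $x\in\Delta$ is a mixed Hodge structure extending $\Z(0)$ by the pure weight $-1$ Hodge structure $\overline{\bV}_x$, hence a point $\overline{\nu}(x)\in J(\overline{\bV}_x)$; choosing local lifts $\ee_\Z\in\overline{\bE}_\Z$ and $\ee_F\in F^0\overline{\cE}$ of $1$ as in the construction of the Griffiths invariant exhibits $\overline{\nu}$ locally as the class of the holomorphic section $\ee_\Z-\ee_F$ of $\overline{\cV}/F^0$, so $\overline{\nu}$ is a holomorphic section of $J(\overline{\bV})$ over $\Sbar$ restricting to $\nu$. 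The main obstacle is the vanishing of $N_{\bE}$ in the second paragraph; the rest is bookkeeping. Alternatively, one can read the corollary off Theorem~\ref{thm:extension}: when $N=0$ the local monodromy $h=\exp(2\pi i N)$ is the identity, so the obstruction group $\bG$ is trivial, $\Jtilde(\bV^\nilp)$ is the smooth restriction of $J(\overline{\bV})$, and the theorem together with Proposition~\ref{prop:nilp_orbit} yields the extension.
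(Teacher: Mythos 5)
Your proof is correct, but it takes a different primary route from the paper. The paper offers no written argument: the corollary is placed immediately after Theorem~\ref{thm:extension} and the remark following it, and is meant to be read off exactly as in your final sentences --- with $N=0$ the local monodromy $h$ is trivial, so $\bG=0$, $\bT=\bK=\bV^\nilp$, the N\'eron model $\Jtilde(\bV^\nilp)$ is just the family of intermediate jacobians of the extended variation, and Theorem~\ref{thm:extension} together with Proposition~\ref{prop:nilp_orbit} extends $\nu$ (the statement is also attributed to the earlier reference). Your main argument instead bypasses the N\'eron model machinery entirely: you use the defining properties of the relative weight filtration to show that $N_\bE=0$ --- since the induced nilpotents on $\Gr^W_{-1}\bE=\bV$ and $\Gr^W_0\bE=\Z(0)$ vanish, property (ii) forces $\Gr^M_j\Gr^W_rE_0=0$ except for $(j,r)=(-1,-1),(0,0)$, so $M_{-2}E_0=0$, $M_0E_0=E_0$, and property (i) gives $N_\bE=0$ --- after which the whole variation $\bE$, its weight filtration and Hodge bundles extend across $\Delta$ and the section $\ee_\Z-\ee_F$ extends holomorphically. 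This is a sound and genuinely self-contained argument (it essentially reproves the cited result rather than quoting it), and your computation also shows $M_\bdot=W_\bdot$ on $E_0$, which is the point needed to see that the boundary fibres of the extended $\overline{\bE}$ are mixed Hodge structures with the original weight filtration; it would be worth making that one clause explicit. What each approach buys: yours isolates the precise admissibility mechanism (vanishing of the monodromy logarithm of the extension, which is exactly what kills the multivalued logarithm example), while the paper's route is a one-line specialization of the already-established N\'eron model extension theorem.
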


The derivative of the lift $\nutilde^\nilp$ of $\nu^\nilp$ constructed in Theorem~\ref{thm:extension} is a section of $F^{-1}\cV\otimes \Omega^1_\Sbar(\log \Delta)$. Combined with the Nilpotent Orbit theorem (Proposition~\ref{prop:nilp_orbit}), this implies that the Green--Griffiths invariant of an admissible normal function is logarithmic along the smooth boundary divisor $\Delta$.

\begin{corollary}
\label{cor:gg-log}
The Green--Griffiths invariant $\deltabar(\nu)$ of $\nu$ extends to a holomorphic section of
$$
\Gr_F^{-1}\cV \otimes \Omega^1_\Sbar(\log\Delta)/\nablabar \Gr_F^0 \cV.
$$
over $\Sbar$.
\end{corollary}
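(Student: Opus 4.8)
The plan is to make the statement local along $\Delta$, exhibit near each point of $\Delta$ a holomorphic lift of $\nu$ whose covariant derivative has at worst a logarithmic pole, and then project. Fix a point of $\Delta$ and a polydisk $U\subseteq\Sbar$ around it in which $\Delta$ is the coordinate hyperplane $t_1=0$; put $U'=U-\Delta$. Recall from the construction of the Griffiths invariant that on any simply connected $O\subseteq S$ the class $\delta(\nu)|_O$ is represented by $\nabla\bv$, where $\bv=\ee_\Z-\ee_F$ for a flat lift $\ee_\Z$ of $1\in\Z$ to $\bE_\Z$ and a holomorphic lift $\ee_F$ of $1$ to $F^0\cE$, and $\deltabar(\nu)|_O$ is the image of this cocycle in $\Gr_F^{-1}\cV\otimes\Omega^1_S/\nablabar\Gr_F^0\cV$. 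So it suffices to choose, on $U'$, a lift of this form whose derivative extends across $\Delta$.

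Since $\bE$ is admissible, $F^0\cE$ is a locally free subsheaf of Deligne's canonical extension of $\bE_\Z\otimes\cO_S$ to $\Sbar$ with locally free quotient, and the connection extends to $\nabla\colon\cE\to\cE\otimes\Omega^1_\Sbar(\log\Delta)$. The quotient $\cE/\cV$ is the canonical extension of $\Z_S(0)\otimes\cO_S$, hence equals $\cO_U$, and the surjection $F^0\cE\to\cE/\cV=\cO_U$ is surjective on fibers because each fiber of $\bE$ is a mixed Hodge structure with $\Gr^W_0=\Z(0)$; hence it is surjective as a map of coherent sheaves and there is a section $\ee_F\in H^0(U,F^0\cE)$ lifting $1$. (Alternatively this is the section $\ee_F$ produced in the proof of Theorem~\ref{thm:extension}, transported from the approximating nilpotent orbit to $\bE$ by the holomorphic automorphism $\Phi_\bE$ of Proposition~\ref{prop:nilp_orbit}, which is the identity along $\Delta$ and acts as the identity on $\Gr^W_0$, so the transport does not affect what follows.) Let $\ee_\Z$ be a multivalued flat section of $\bE_\Z$ over $U'$ lifting $1$, which exists since $\bE_\Z/\bV_\Z\cong\Z$, and set $\bv=\ee_\Z-\ee_F\in H^0(U',\cV)$; it is a local lift of $\nu$.

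Now $\nabla\ee_\Z=0$, so $\nabla\bv=-\nabla\ee_F$, and this is a section of $\cE\otimes\Omega^1_\Sbar(\log\Delta)$ over $U$ because the connection of the canonical extension has logarithmic poles along $\Delta$ and $\ee_F$ is holomorphic across $\Delta$. On the other hand $\bv$ is a section of $\cV$ over $U'$ and $\nabla$ preserves $\cV$, so $\nabla\bv$ is a section of $\cV\otimes\Omega^1$ over $U'$; since $\cV$ is a subbundle of $\cE$ with locally free quotient and $U'$ is dense, $\nabla\bv$ is in fact a section of $\cV\otimes\Omega^1_\Sbar(\log\Delta)$ over $U$. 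As $\bV$ has weight $-1$ we have $F^{-1}\cV=\cV$, so $\nabla\bv\in H^0(U,F^{-1}\cV\otimes\Omega^1_\Sbar(\log\Delta))$. Its image in $\Gr_F^{-1}\cV\otimes\Omega^1_\Sbar(\log\Delta)/\nablabar\Gr_F^0\cV$ restricts on $U'$ to $\deltabar(\nu)$ and is independent of the choices: replacing $\ee_F$ by another lift changes $\bv$ by a section of $F^0\cV$, hence $\nabla\bv$ by an element of $\nabla F^0\cV$, which maps into $\nablabar\Gr_F^0\cV$; replacing $\ee_\Z$ by another flat lift changes $\bv$ by a flat section of $\bV_\Z$, which does not change $\nabla\bv$. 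Therefore the local extensions agree on overlaps and with the given $\deltabar(\nu)$ on $S$, and glue to a global section of $\Gr_F^{-1}\cV\otimes\Omega^1_\Sbar(\log\Delta)/\nablabar\Gr_F^0\cV$ over $\Sbar$.

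The only real content beyond unwinding definitions is the existence of a lift $\ee_F$ of $1$ that extends holomorphically across $\Delta$ inside $F^0\cE$ — which is exactly what the admissibility of $\bE$ (equivalently, the boundary behaviour encoded in Theorem~\ref{thm:extension} and Proposition~\ref{prop:nilp_orbit}) provides — together with the small point that $\nabla\bv$, a priori only a section of the ambient log-extended bundle $\cE\otimes\Omega^1_\Sbar(\log\Delta)$, actually lands in the sub-bundle coming from $\cV$; this uses that $\bV$ has weight $-1$, so $F^{-1}\cV=\cV$, and that $\cV$ is a subbundle of $\cE$ with locally free quotient. If one prefers the nilpotent-orbit route, the one extra check is that the perturbation of $\ee_F$ by $\Phi_\bE$ contributes only terms lying in $\cV\otimes\Omega^1_\Sbar(\log\Delta)$, which holds because $\Phi_\bE$ is holomorphic on $U$ and acts as the identity on $\Gr^W_0$.
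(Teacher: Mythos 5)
Your argument is correct and, at its core, runs the same way as the paper's: exhibit, near each point of $\Delta$, a lift $\bv=\ee_\Z-\ee_F$ of $\nu$ with $\ee_F$ a holomorphic section of $F^0\cE$ defined across $\Delta$, so that $\nabla\bv=-\nabla\ee_F$ lies in $F^{-1}\cV\otimes\Omega^1_\Sbar(\log\Delta)$ because the connection on the canonical extension has logarithmic poles; the paper gets exactly this from the lift $\nutilde^\nilp$ constructed in Theorem~\ref{thm:extension} together with Proposition~\ref{prop:nilp_orbit}, which is precisely your parenthetical alternative. Where you genuinely differ is in your primary construction of $\ee_F$, via surjectivity of $F^0\cE\to\cE/\cV\cong\cO$ over all of $\Sbar$. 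There the justification ``because each fiber of $\bE$ is a mixed Hodge structure with $\Gr^W_0=\Z(0)$'' is too quick: the fibers of $\cE$ over points of $\Delta$ are not fibers of $\bE$, and surjectivity of a bundle map on a dense open set does not propagate to the boundary (multiplication by $t_1$ on $\cO$ is the standard counterexample) --- and the boundary is exactly where the content of the corollary lies. What makes the fiberwise surjectivity true at points of $\Delta$ is admissibility: the limit MHS $E_\vv$ exists, is filtered by $W_\bdot$ by sub-MHS, and $\Gr^W_0 E_\vv$ is rank one of type $(0,0)$ (the relative weight filtration on the rank-one quotient is concentrated in degree $0$ since $N$ kills it), so $F^0E_0$ surjects onto $\Gr^W_0E_0\otimes\C$. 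With that point made explicit --- or by simply taking the $\ee_F$ of Theorem~\ref{thm:extension} transported by $\Phi_\bE$, as you note --- the remainder of your argument (that $\nabla\bv$ lands in the sub-bundle $\cV\otimes\Omega^1_\Sbar(\log\Delta)$, independence of the choices of $\ee_F$ and $\ee_\Z$, and gluing) is complete. The modest advantage of your primary route is that it stays with the canonical extension of $\bE$ and never trivializes over the nilpotent orbit; the paper's route has the advantage that the needed boundary-fiber statement is already built into the proof of Theorem~\ref{thm:extension}.
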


\section{Residual normal functions and the normal rank}

The nilpotent Orbit Theorem (Proposition~\ref{prop:nilp_orbit}) implies that the rank of $\nu$ on $S$ is bounded below by the rank of $\nu^\nilp$ on $L'$. In this section we introduce several tools for estimating the rank of $\nu^\nilp$. The first is the normal rank.

\begin{definition}
\label{def:normal_rk}
The {\em normal rank} $\rank^\perp_x\nu$ of $\nu$ at $x\in \Delta$ is defined to be the rank of the restriction of $\nu^\nilp$ to $L_x'$. The normal rank $\rank_\Delta^\perp \nu$ of $\nu$ along $\Delta$ is defined by
$$
\rank_\Delta^\perp \nu := \max_{x\in \Delta} \rank^\perp_x\nu \in \{0,1\}.
$$
\end{definition}

The second tool is the {\em residual normal function} $\nu_\Delta$, defined below.

\subsection{The residual  normal function $\nu_\Delta$}

By Theorem~\ref{thm:extension}, $\nu^\nilp$ has a natural lift $\nutilde^\nilp$ to a section of $J(\bT)$ over a neighbourhood $U$ of $\Delta$ in $L$. The order of the class of $\nu^\nilp$ in $H^0(U,\bG)$ is the least positive integer such that $k\nutilde^\nilp$ is a section of $J(\bK)$. Since $\bK \subseteq M_{-1}\bV^\nilp$, $k\nutilde^\nilp$ descends to a section of $J(\Gr^M_{-1}\bV^\nilp)$ over $U$. Set
$$
\bV_\Delta := \Gr^M_{-1} \bV^\nilp.
$$
This is a polarizable variation of Hodge structure over $\Delta$ of weight $-1$.

\begin{definition}
\label{def:residual_nf}
The residual normal function $\nu_\Delta$ is the restriction to $\Delta$ of the section of $J(\bV_\Delta)$ determined by $k\nutilde^\nilp$.
\end{definition}

\begin{remark}
\label{rem:biext}
The section $k\nutilde^\nilp$ of $J(\bK)$ over $\Delta$ corresponds to an extension
$$
0 \to \bK \to \bL \to \Z \to 0
$$
of admissible variations of MHS over $\Delta$, which we call the {\em residual variation}. This extension, after tensoring with $\Q$, is a subquotient of $M_0\bV^\nilp$. The variation that corresponds to the residual normal function $\nu_\Delta$ is $\bL/M_{-2}$.
\end{remark}

\subsection{The real N\'eron model}

The real local system $\bT_\R$ underlying $\bT$ has constant fiber $H^0(L_x',\bV_\R)$ over $L_x$ for each $x\in \Delta$. Define
$$
J(\bT_\R) = \bT_\R/\bT_\Z.
$$
This is a family of compact real tori over $L$ which fits in the exact sequence
$$
0 \to J(\bK_\R) \to J(\bT_\R) \to \bG \to 0.
$$
The inclusion $J(\bT_\R) \hookrightarrow J(\bT)$ induces an isomorphism on fundamental groups.

Even though $J(\bT)|_{U'} \to J(\bV^\nilp)$ has infinite degree when $N\neq 0$, the map
$$
J(\bT_\R) \to J(\bT) \to \Jtilde(\bV^\nilp)
$$
is an inclusion.

\begin{remark}
\label{rem:real-jac}
The variation $\bK$ is a constant subvariation of $\bV^\nilp$ with strictly negative weights. Denote its fiber by $K$. Since $K_\R \cap F^0 K = 0$, the torus $J(K_\R) := K_\R/K_\Z$ is the kernel of the map
$$
\Ext^1_\MHS(\Z,K) \to \Ext^1_\MHS(\R,K) = K_\C/(K_\R + F^0 K).
$$
Thus $J(K_\R)$ consists of those extensions of $\Z$ by $K$ that split as an $\R$-MHS.
\end{remark}

\begin{remark}
This is a continuation of Remark~\ref{rem:modular}. In the situation described there, $J(\bK_\R)$ is the trivial $S^1$ bundle over $\bD$. Its image in $J(\bK)$, the trivial $\C^\times$ bundle, is the unit circle. It is the kernel of the map
$$
\Ext^1_\MHS(\Z,\Z(1)) \to \Ext^1_\MHS(\R,\R(1)).
$$
The group $G_\Delta$ is the group of torsion sections of $J(\bV)$ modulo those that lie in the circle bundle $J(\bK_\R)$. The bundle $J(\bT_\R)$ is $G_x \times J(\bK_\R)$. Its image in a fiber of $J(\bV)$ is the real subgroup whose identity component is the image of the unit circle in $\C^\times$ and its translates by torsion sections that correspond to the non-trivial elements of $G_x$.
\end{remark}

\subsection{The normal rank}

To study the normal rank of $\nu$ at $x\in \Delta$, we restrict the variations to the fiber $L_x'$ of $L'$ over $x$. The open neighbourhood $U$ of $\Delta$ in $L$ will be as in Section~\ref{sec:setup}. Recall that $U'=U-\Delta$.

\begin{lemma}
\label{lem:rank0}
Let $k>0$ be the order of the class of $\nu^\nilp$ in $H^0(U\cap L_x',\bG)$. The restriction of the normal function $\nu^\nilp$ to $U\cap L_x'$ has rank 0 if and only if the restriction of $k\nuhat^\nilp$ to $U\cap L_x'$ factors through the inclusion $\Jtilde(\bK_\R)|_{U\cap L_x'}  \hookrightarrow \Jtilde(\bV^\nilp)|_{U\cap L_x'}$.
\end{lemma}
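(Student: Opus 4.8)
The plan is to prove that $\rank^\perp_x\nu=0$, that $\nu^\nilp|_{U\cap L_x'}$ is a monodromy‑invariant section of $J(\bV^\nilp_\R)$, and that $k\,\nuhat^\nilp|_{U\cap L_x'}$ factors through $\Jtilde(\bK_\R)|_{U\cap L_x'}$ are all equivalent, treating the first two equivalences as soft and concentrating on the passage to the factorization condition.

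First I would dispose of the soft part. By Proposition~\ref{prop:derivatives}, $\rank_s\nu^\nilp=\tfrac12\rank_\R(\nabla\nu^\nilp_\R)_s$, so $\rank^\perp_x\nu=0$ if and only if $\nabla\nu^\nilp_\R\equiv 0$ on the connected set $U\cap L_x'$, i.e.\ if and only if a local real‑analytic lift of $\nu^\nilp$ is flat, which on a connected base is the same as $\nu^\nilp|_{U\cap L_x'}$ being monodromy invariant. For the forward direction, a monodromy‑invariant section lies in $H^0(U\cap L_x',J(\bV^\nilp_\R))$, and by Lemma~\ref{lem:obstruction} this group is realized by the constant sections of $J(\bT_\R)|_{L_x}$; writing $\nu^\nilp$ as the constant section with value $w$ and comparing $w$ with the lift $\nutilde^\nilp$ of Theorem~\ref{thm:extension} (they differ by a section of $J(\bK)$), the image of $w$ in $\bG$ is the class of $\nu^\nilp$, which by hypothesis has order $k$; hence $kw\in J(\bK_\R)|_x$, $k\,\nuhat^\nilp|_{U\cap L_x'}$ is the constant section with value $kw$, and by continuity the factorization persists across $\Delta$.

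The heart of the argument is the reverse implication. Since the class of $\nu^\nilp$ in $\bG$ is killed by $k$, we are in the vanishing‑obstruction case of Theorem~\ref{thm:extension}, so its construction produces a \emph{horizontal} lift $v$ of $k\,\nuhat^\nilp$ (viewed via $k\,\nutilde^\nilp$ as a section of $J(\bK)$) which, in a horizontal trivialization of $\cV$ along $L_x$ (Section~\ref{sec:nilp-orbits}), is a constant vector lying in $\ker N\subseteq\bV^\nilp_\C$. Next I would use that $\bK$ is the pullback along $\pi$ of the variation $\bK_0$ on $\Delta$, so its Hodge subbundle $F^0\bK$ is constant on $L_x$; equivalently $(\ker N)\cap F^0\bV^\nilp_z$ is a $z$‑independent subspace $\Phi$, because $\exp(zN)$ acts trivially on $\ker N$. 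The factorization hypothesis says $v\in\bK_\R+\Phi$ (absorbing the $\bK_\Z$‑term into $\bK_\R$), so there is a real vector $R\in\bK_\R$ with $v-R\in\Phi$; it is unique since $\bK_\R\cap\Phi=0$ as $\bK$ has negative weights, hence $R$ is a single constant vector. Since $\Phi\subseteq F^0\bV^\nilp_z$ for every $z$, the real representative of $v$ modulo $F^0\bV^\nilp_z$ — that is, the real form of $k\,\nu^\nilp$ under $J(\bV^\nilp)\cong J(\bV^\nilp_\R)$ — is constantly equal to $R$. Thus $\nabla(k\,\nu^\nilp_\R)=0$, so $\nabla\nu^\nilp_\R=0$, and $\rank^\perp_x\nu=0$.

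I expect the main obstacle to be precisely this reverse implication, and within it the two points that make it work: that a horizontal lift of $k\,\nuhat^\nilp$ is an honest constant vector in $\ker N$ (which is available because the $\bG$‑class is killed by $k$, putting us in the easy case of Theorem~\ref{thm:extension}), and that the Hodge filtration of $\bK$ does not move along $L_x$, so that the factorization condition pins down the real form of $k\,\nu^\nilp$ as a constant rather than a varying vector — this is exactly the mechanism that fails, correctly, when $\bK$ is pure of weight $-1$ (equivalently $N=0$), where $\Jtilde(\bK_\R)$ is all of $\Jtilde(\bV^\nilp)$ and $\nu^\nilp$ already extends across $\Delta$ by Corollary~\ref{cor:extn}. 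The remaining ingredients — the identification of $H^0(U\cap L_x',J(\bV^\nilp_\R))$ with the constant sections of $J(\bT_\R)$ from Lemma~\ref{lem:obstruction}, and the lattice bookkeeping when comparing lifts to $J(\bT)$ and $J(\bK)$ — are routine but call for a little care.
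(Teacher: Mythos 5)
Your overall skeleton is the same as the paper's: multiply by $k$ to kill the class in $\bG$ and get the flat lift $v$ of Theorem~\ref{thm:extension} valued in $\ker N$, identify ``rank $0$'' with the existence of a flat real lift via Proposition~\ref{prop:derivatives}, and use Lemma~\ref{lem:obstruction} for the forward direction. The forward direction and the observation that $K_\C\cap F^0 V_z$ is independent of $z$ along the slice are fine. The gap is in the reverse implication, at the step ``the factorization hypothesis says $v\in\bK_\R+\Phi$ (absorbing the $\bK_\Z$-term into $\bK_\R$)''. The hypothesis is a condition \emph{downstairs} in $J(\bV^\nilp)$, whose fiber is $V_\C/(V_\Z+F^0V_z)$: it says only that $v\in K_\R+V_\Z+F^0V_z$ for each $z$, with the \emph{full} lattice $V_\Z$ and the full subspace $F^0V_z$. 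You are implicitly testing membership upstairs in $J(\bK)=K_\C/(K_\Z+F^0K)$, but the comparison map $J(\bK)\to J(\bV^\nilp)$ is very far from injective (it has infinite degree when $N\neq 0$, cf.\ Remark~\ref{rem:modular}), so membership of the image of $v$ in the subtorus $J(\bK_\R)$ of $J(V_z)$ does not lift to membership in $J(\bK_\R)\subset J(\bK)$. From $v\in K_\C$ you only get $\lambda+f\in K_\C$ for some $\lambda\in V_\Z$, $f\in F^0V_z$; the summands need not separately lie in $K_\Z$ and in $\Phi=K_\C\cap F^0V_z$, and indeed at special values of $z$ they need not.

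Ruling out a lattice contribution with nonzero image in $V_\Z/K_\Z$ (and the accompanying $F^0$-term outside $K_\C$) is not ``routine bookkeeping''; it is exactly where the content of the converse lies, and it genuinely uses that the condition holds for \emph{all} $z$ in the punctured slice together with the fact that $F^0V_z=\exp\bigl(\tfrac{\log z}{2\pi i}N\bigr)F^0_{\lim}$ rotates transversally to $K_\C$ as $z$ winds around the puncture. One workable repair: the image of the real lift $\rho_z(v)$ in $V_\R/K_\R$ is a continuous function with values in the discrete image of $V_\Z$, hence equals a fixed $\lambda\bmod K_\R$; writing $\rho_z(v)=\lambda+r(z)$ with $r(z)\in K_\R$ and $f(z)=v-\lambda-r(z)\in F^0V_z$, one then shows $dr=0$ by comparing the $d\bar z$-component (which must lie in $\Phi$) with its conjugate and with the constraint $Nf=-N\lambda$, using $F^0V_z\cap\overline{F^0V_z}=0$. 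Note that the naive ``totally real'' argument is not available here: since $F^0K\neq 0$ (e.g.\ $\Gr^M_{-1}\bK$ has weight $-1$), the subtorus $J(K_\R)$ of $J(V_z)$ does contain complex directions, so pointwise membership alone does not force the derivative to vanish. Once the reduction to $\lambda\in K_\Z$, $f\in\Phi$ is actually established (for generic $z$, then all $z$ by continuity), your concluding computation is correct; but as written the key step is asserted rather than proved, whereas the paper's own (admittedly terse) proof goes through the equivalence with a \emph{constant} real flat lift rather than through a pointwise membership condition.
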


\begin{proof}
First note that multiplication of a normal function by a positive integer does not change the rank. By replacing $\nu^\nilp$ by $k\nu^\nilp$, we may assume that $\nutilde^\nilp$ is a section of $J(\bK)$. Then $\nu^\nilp$ has rank 0 over $U\cap L_x'$ if and only if it lifts to a constant section of $\bV_\R$ over $U\cap L_x'$. In other words, it lifts to an element of $H^0(L_x',\bK_\R)$ and thus descends to a section of $\Jtilde(\bK_\R)$ over $L_x'$.
\end{proof}

Suppose that $\bD$ is an analytic disk in $\Sbar$ that intersects $\Delta$ transversally at $x$. Set $\bD'= \bD\cap L'$. Observe that the nilpotent orbit associated to $\bV|_{\bD'}$ is $\bV^\nilp|_{L_x'}$.

\begin{corollary}
We have $0 \le \rank \nu^\nilp_{U\cap L_x'} \le \rank \nu|_{\bD'} \le 1$.
\end{corollary}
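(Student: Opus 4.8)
The plan is to handle the outer inequalities by a dimension count and to obtain the middle inequality as the one‑variable instance of the general lower bound ``$\rank_S\nu\ge\rank_{L'}\nu^\nilp$'' recorded at the start of this section (a consequence of the Nilpotent Orbit Theorem).

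For the outer bounds: the rank of a normal function at a point never exceeds the dimension of its base there, and both $\bD'$ (a punctured disc) and $U\cap L_x'$ are one‑dimensional, so every pointwise rank --- hence $\rank\nu|_{\bD'}$ and $\rank\nu^\nilp|_{U\cap L_x'}$ --- lies in $\{0,1\}$. This gives $\rank\nu|_{\bD'}\le1$ and $0\le\rank\nu^\nilp|_{U\cap L_x'}$ at once.

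For the middle inequality I would use the remark recorded just before the statement: $\{x\}$ is a smooth divisor in the disc $\bD$, the nilpotent orbit of the one‑variable degeneration $\bV|_{\bD'}$ is $\bV^\nilp|_{L_x'}$, and correspondingly $\bE|_{\bD'}$ degenerates to $\bE^\nilp|_{L_x'}$; moreover $\nu|_{\bD'}$ is admissible, being the restriction of an admissible normal function to an open subset of a curve. Thus $\rank\nu^\nilp|_{U\cap L_x'}\le\rank\nu|_{\bD'}$ is exactly the general lower bound applied with $S=\bD'$, $\Delta=\{x\}$ and $L'=L_x'$. Since both sides lie in $\{0,1\}$ it is equivalent to show that $\rank\nu|_{\bD'}=0$ forces $\rank\nu^\nilp|_{U\cap L_x'}=0$. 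If $\nu|_{\bD'}$ has rank $0$ then, by Proposition~\ref{prop:derivatives}, it is locally constant on $\bD'$; I would then write the Hodge filtration of $\bV|_{\bD'}$ as the holomorphic perturbation $\Phi_\bE(t)$, with $\Phi_\bE(0)=\id$, of that of $\bV^\nilp|_{L_x'}$ as in Proposition~\ref{prop:nilp_orbit}, and track the lift $\nuhat^\nilp$ constructed in Theorem~\ref{thm:extension}, to conclude that the restriction of $k\nuhat^\nilp$ to $U\cap L_x'$ factors through the inclusion $J(\bT_\R)|_{U\cap L_x'}\hookrightarrow\Jtilde(\bV^\nilp)|_{U\cap L_x'}$ of the real N\'eron model. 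By Lemma~\ref{lem:rank0} this says precisely that $\rank\nu^\nilp|_{U\cap L_x'}=0$. Assembling the pieces gives $0\le\rank\nu^\nilp|_{U\cap L_x'}\le\rank\nu|_{\bD'}\le1$.

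The step I expect to be the main obstacle is the passage from local constancy of $\nu|_{\bD'}$ on the punctured disc to the statement that $k\nuhat^\nilp$ lands in the real N\'eron model along $L_x'$ --- equivalently, that the canonical derivative of $\nu|_{\bD'}$ degenerates, as $t\to0$, to the canonical derivative of $\nu^\nilp|_{L_x'}$. Here one cannot argue purely formally, because the holomorphic perturbation $\Phi_\bE(t)$ of the Hodge filtration does not act $\R$‑linearly on the torus fibres; one must invoke the Nilpotent Orbit Theorem in the form of Proposition~\ref{prop:nilp_orbit} together with the extension result Theorem~\ref{thm:extension}, working with the real lift $\nu_\R$ rather than a holomorphic lift.
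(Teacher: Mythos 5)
Your overall architecture matches the paper's: the outer bounds are immediate from the one--dimensionality of $\bD'$ and $U\cap L_x'$, and the middle inequality is reduced to showing that $\rank\nu|_{\bD'}=0$ forces $\rank\nu^\nilp|_{U\cap L_x'}=0$, with Lemma~\ref{lem:rank0} as the criterion. But the decisive implication --- that local constancy of $\nu|_{\bD'}$ forces (a multiple of) the nilpotent-orbit lift into the real torus --- is exactly the step you leave as an acknowledged ``main obstacle,'' so as written the argument is incomplete. Your fallback, that the middle inequality ``is exactly'' the general bound $\rank_S\nu\ge\rank_{L'}\nu^\nilp$ applied to $S=\bD'$, does not close the gap either: in the paper that bound is only asserted as a consequence of the Nilpotent Orbit Theorem, and the fiberwise/transverse-disc refinement is precisely what this corollary is meant to establish, so invoking it here would rest on the very degeneration statement being proved. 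Your instinct that one cannot argue by controlling the perturbation $\Phi_\bE(t)$ on the punctured disc is correct --- the derivative of the nilpotent orbit can itself decay as $t\to 0$, so no formal comparison of derivatives over $\bD'$ will work.

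The idea you are missing is to compare $\nu$ and $\nu^\nilp$ at the boundary point rather than over the punctured disc. After multiplying by a positive integer one may assume $\nuhat^\nilp|_{U\cap L_x'}$ lifts to a section of $J(\bK)$. If $\rank\nu|_{\bD'}=0$, then by Proposition~\ref{prop:derivatives} the real lift of $\nu|_{\bD'}$ is flat; a flat section of $\bV_\R$ over $\bD'$ is invariant under the local monodromy, hence is a section of $\bK_\R=\ker N\otimes\R$, and therefore the limiting value satisfies $\nuhat(0)=\nu(0)\in J(K_\R)$, where $K$ is the fiber of $\bK$ over $x$. By the remark following Theorem~\ref{thm:extension}, $\nuhat(0)=\nuhat^\nilp(0)$; and by the construction in Theorem~\ref{thm:extension} the lift of $\nu^\nilp$ is constant along $L_x\cap U$, so its value on all of $U\cap L_x'$ lies in $J(\bK_\R)$. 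Lemma~\ref{lem:rank0} then gives $\rank\nu^\nilp|_{U\cap L_x'}=0$, which is the contrapositive statement you needed. No analysis of $\Phi_\bE(t)$ is required, because at the point $x$ itself the extended values of $\nu$ and of its nilpotent orbit literally coincide.
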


\begin{proof}
By multiplying $\nu|_{\bD'}$ and $\nu^\nilp|_{L_x'}$ by a positive integer, we may assume that $\nuhat^\nilp|_{U\cap L_x'}$ lifts to a section of $J(\bK)$. It suffices to show that if $\nu|_{\bD'}$ has rank 0, then so does $\nu^\nilp|_{U\cap L_x'}$. If $\nu|_{\bD'}$ has rank 0, then $\nu(0) \in J(K_\R)$, where $K$ is the fiber of $\bK$ over $x$. Since $\nu(0) = \nuhat^\nilp(0)$, this implies that $\nu^\nilp|_{U\cap L_x'}$ is a section of $J(\bK)$. Lemma~\ref{lem:rank0} now implies that $\nu^\nilp|_{U\cap L_x'}$ also has rank 0.
\end{proof}

\subsection{The residual normal function $\nu_\Delta$ and the rank of $\nu$}

\begin{proposition}
\label{prop:ranks}
With the notation above, we have
\begin{enumerate}

\item $\rank \nu \ge \rank \nu^\nilp$,

\item
\label{item:inequality}
$\rank \nu^\nilp \ge \rank \nu_\Delta + \rank_\Delta^\perp \nu$,

\item $\rank_\Delta^\perp \nu = 0$ if and only if there is a  positive integer $k$ such that $k\nu^\nilp$ is a section of $J(\bK_\R)$ that is constant on each fiber of $L' \to \Delta$.

\end{enumerate}
\end{proposition}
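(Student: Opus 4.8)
The plan is to treat the three assertions in turn, the second being the substantial one.

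\medskip

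\noindent\emph{Part (1).}
This is the assertion flagged just before this section, and I would deduce it from the Nilpotent Orbit Theorem (Proposition~\ref{prop:nilp_orbit}) together with lower semicontinuity of the rank. Work in a polydisk $U$ about a point of $\Delta$ in which $\Delta\cap U=\{t_1=0\}$, and for $s\in(0,1]$ let $m_s\colon U\to U$ be the rescaling $(t_1,t')\mapsto(st_1,t')$. Each $m_s^\ast\nu$ is a normal function on $U-\Delta$ whose rank at a point equals the rank of $\nu$ at the image point, hence is $\le\rank\nu$. By Proposition~\ref{prop:nilp_orbit}, the function $\Phi_\bV(st_1,t')$ describing the Hodge filtration of $m_s^\ast\bV$ converges, locally uniformly as $s\to0$, to the function describing the nilpotent orbit $\bV^\nilp$, and the lifts $\ee_\Z$, $\ee_F$ are carried along accordingly; thus $m_s^\ast\nu\to\nu^\nilp$ and, since the Hodge data are holomorphic, so do the canonical derivatives. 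Lower semicontinuity of the rank of a continuously varying family of $\C$-linear maps then gives $\rank_\vv\nu^\nilp\le\liminf_{s\to0}\rank(m_s^\ast\nu)\le\rank\nu$ for every $\vv$, which is (1).

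\medskip

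\noindent\emph{Part (2).}
First I would normalise: replacing $\nu$ by $k_0\nu$, where $k_0>0$ is the order of the class of $\nu^\nilp$ in $H^0(U,\bG)$, changes none of the three ranks (each is invariant under multiplication by a positive integer), and makes $\nutilde^\nilp$ a section of $J(\bK)$ over $U$, so that $\nu^\nilp$ factors through $J(M_{-1}\bV^\nilp)$ and its image $\bar\nu^\nilp$ in $J(\bV_\Delta)=J(\Gr^M_{-1}\bV^\nilp)$ is defined. Let $q\colon\Gr_F^{-1}M_{-1}\bV^\nilp\twoheadrightarrow\Gr_F^{-1}\bV_\Delta$ be the natural surjection, whose kernel contains $N\!\left(\Gr_F^0 M_0\bV^\nilp\right)$ since $N(M_0)\subseteq M_{-2}$. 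The two key points are: \textbf{(A)} in the trivialisation of Section~\ref{sec:setup} the $\tfrac{dt_1}{t_1}$-component of the $\cO$-linear Kodaira--Spencer map $\nablabar\colon\Gr_F^0\cV^\nilp\to\Gr_F^{-1}\cV^\nilp\otimes\Omega^1(\log\Delta)$ is $-N$; hence along any fibre $L_x'$ the fibre-direction component of $\nablabar_\vv\nu^\nilp$ is the canonical derivative of $\nu^\nilp|_{L_x'}$ and takes values in $\ker q$; and \textbf{(B)} consequently $\bar\nu^\nilp$ has vanishing real derivative along every $L_x'$, so—$\bV_\Delta$ having trivial monodromy and $L_x'\cong\C^\ast$ being connected—$\bar\nu^\nilp$ is constant on the fibres of $L'\to\Delta$, i.e.\ $\bar\nu^\nilp=\pi^\ast\nu_\Delta$, whence $q$ composed with the horizontal component of $\nablabar_\vv\nu^\nilp$ equals $\nablabar_x\nu_\Delta\circ d\pi$, of rank $\rank_x\nu_\Delta$.

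To finish (2), choose $x\in\Delta$ lying in the locus $\{\rank_x\nu_\Delta=\rank\nu_\Delta\}$—which is open and dense in the (irreducible) divisor $\Delta$, its complement being a proper analytic subset—and also in the open nonempty locus $\{\rank^\perp_x\nu=\rank^\perp_\Delta\nu\}$ (automatic when this normal rank is $0$, an open condition when it is $1$); an open dense set meets every nonempty open set, so such an $x$ exists. Pick $\vv\in L_x'$ at which $\nu^\nilp|_{L_x'}$ attains its rank and set $\phi=\nablabar_\vv\nu^\nilp$, with $T_\vv L'=T_\vv L_x'\oplus H_\vv$, $d\pi\colon H_\vv\xrightarrow{\ \sim\ }T_x\Delta$. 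By (A), $W:=\im\phi$ contains $\phi(T_\vv L_x')$, a subspace of dimension $\rank^\perp_\Delta\nu$ inside $\ker q$; by (B), $q(W)\supseteq q\,\phi(H_\vv)$ has dimension $\rank\nu_\Delta$. From $\dim W=\dim q(W)+\dim(W\cap\ker q)$ we get $\rank_\vv\nu^\nilp=\dim W\ge\rank\nu_\Delta+\rank^\perp_\Delta\nu$, and $\rank\nu^\nilp\ge\rank_\vv\nu^\nilp$ gives (2).

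\medskip

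\noindent\emph{Part (3) and the main obstacle.}
For (3), the implication $\Leftarrow$ is immediate: a section of $J(\bK_\R)$ constant on the fibres of $L'\to\Delta$ restricts to a constant, hence rank-$0$, section on each $L_x'$. Conversely, if $\rank^\perp_\Delta\nu=0$, let $k$ be the (globally finite) order of the class of $\nu^\nilp$ in $H^0(U,\bG)$, so $k\nutilde^\nilp$ is a global section of $J(\bK)$ restricting to $k\nu^\nilp|_{L_x'}$ on each fibre; Lemma~\ref{lem:rank0}, whose proof produces a lift in $H^0(L_x',\bK_\R)=K_{x,\R}$, says precisely that each such restriction is a constant section with values in $J(\bK_\R)$. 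Since this holds for every $x\in\Delta$, $k\nutilde^\nilp$ takes values in $J(\bK_\R)$ and is constant on the fibres of $L'\to\Delta$, as required. The main obstacle is the pair of identifications (A) and (B) in Part (2): pinning down that the ``log-pole direction'' part of the Kodaira--Spencer map for the nilpotent orbit is exactly $-N$—so that the fibre-direction derivative lands in $\ker q$ and the residual normal function is recovered as the $q$-image of the horizontal derivative—requires carefully unwinding the trivialisation and the normalisations of $\ee_F$ made in Section~\ref{sec:setup}. Once (A) and (B) are established, the choice of a good point $x$ and the final estimate are routine linear algebra and semicontinuity, and Part (1) and Part (3) are short.
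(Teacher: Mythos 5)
Your proposal is correct and follows essentially the same route as the paper: part (1) via the Nilpotent Orbit Theorem (Proposition~\ref{prop:nilp_orbit}), part (3) via Lemma~\ref{lem:rank0}, and your points (A) and (B) in part (2) are precisely the commutativity of the diagram the paper uses, comparing the fibre-direction derivative (landing in $\Gr_F^{-1}M_{-2}$) with the derivative of $\nu_\Delta$ on $\Gr_F^{-1}\Gr^M_{-1}$, followed by the same dimension count. The extra details you supply (semicontinuity for (1), the choice of a generic point $x$, and the uniform integer $k$ in (3)) are sound elaborations of steps the paper leaves implicit.
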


Note that equality may not hold in (\ref{item:inequality}) when $\pi_1(\Delta,x)$ acts non-trivially on the kernel of the restriction mapping $H_0(L_x,W_0\End V_\vv) \to \End K_x$.

\begin{proof}
As previously remarked, the first assertion follows from the Nilpotent Orbit Theorem (Prop.~\ref{prop:nilp_orbit}). The second assertion follows from the commutativity of the diagram
$$
\begin{tikzcd}[column sep=scriptsize]
0 \ar[r] & T_\vv L_x' \ar[r] \ar[d,"\nablabar\nu^\nilp|_{L_x}"] & T_\vv L' \ar[r,"\pi_\ast"] \ar[d,"\nablabar\nu^\nilp"] & T_x \Delta \ar[r] \ar[d,"\nablabar\nu_\Delta"] & 0 \\
0 \ar[r] & \Gr_F^{-1} M_{-2} V_x \ar[r] & \Gr_F^{-1} M_{-1} V_x \ar[r] & \Gr_F^{-1} \Gr_{-1}^M V_x \ar[r] & 0
\end{tikzcd}
$$
where $x\in \Delta$ and $\vv \in L_x'$. The third assertion follows from Lemma~\ref{lem:rank0}.
\end{proof}

\begin{corollary}
\label{cor:splitting}
If $\rank \nu^\nilp = \rank \nu_\Delta$, then there is a normal function section $\nubar$ of $J(\bK_\R)$ over $\Delta$ and a positive integer $k$ such that the diagram
$$
\begin{tikzcd}
 & J(\bK_\R) \ar[d] \\
L' \ar[ur,"k\nu^\nilp"] \ar[r,"\pi"] & \Delta \ar[u, bend right=40, "\nubar"']
\end{tikzcd}
$$
commutes. Consequently, each fiber of the extension of $\Z$ by $\bK$ (as variations of MHS) determined by $k\nu^\nilp$ splits after tensoring with $\R$.
\end{corollary}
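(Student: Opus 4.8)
The plan is to obtain the statement almost formally from Proposition~\ref{prop:ranks} and Remark~\ref{rem:real-jac}, the only genuine content being a descent along $\pi\colon L'\to\Delta$. First I would feed the hypothesis $\rank\nu^\nilp=\rank\nu_\Delta$ into the inequality of Proposition~\ref{prop:ranks}(\ref{item:inequality}): since the normal rank is a non-negative integer, $\rank\nu^\nilp\ge\rank\nu_\Delta+\rank_\Delta^\perp\nu$ forces $\rank_\Delta^\perp\nu=0$.

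Next I would invoke Proposition~\ref{prop:ranks}(3): the vanishing of the normal rank produces a positive integer $k$ such that $k\nu^\nilp$ is a section of $J(\bK_\R)$ over $L'$ which is constant on each fibre of $\pi$. Because $\bK=\pi^\ast\bK_0$ is pulled back from its restriction $\bK_0$ to $\Delta$, the family $J(\bK_\R)\to L$ is $\pi^\ast J(\bK_{0,\R})$, so a section of it that is constant along the fibres of $\pi$ is exactly $\pi^\ast$ of a section $\nubar$ of $J(\bK_\R)=J(\bK_{0,\R})$ over $\Delta$; concretely one takes $\nubar$ to be the restriction of the holomorphic lift $k\nutilde^\nilp$ of Theorem~\ref{thm:extension} to the zero section $\Delta\subset L$, and constancy along fibres gives $k\nu^\nilp=\nubar\circ\pi$, i.e.\ the asserted commuting diagram. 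That $\nubar$ is a \emph{normal function} costs nothing extra: $k\nutilde^\nilp|_\Delta$ is by definition the period section of the residual variation $0\to\bK\to\bL\to\Z\to0$ of admissible variations of MHS over $\Delta$ from Remark~\ref{rem:biext}, and $\nubar$ lands in $J(\bK_\R)$ because $k\nu^\nilp=\pi^\ast\nubar$ already does.

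For the last sentence I would apply Remark~\ref{rem:real-jac} fibre by fibre: for each $x\in\Delta$ the real torus $J(K_{x,\R})$ is precisely the subgroup of $\Ext^1_\MHS(\Z,K_x)$ consisting of extensions that split as $\R$-mixed Hodge structures. Since $\nubar$, and hence $k\nu^\nilp=\pi^\ast\nubar$, takes values in $J(\bK_\R)$, every fibre of the extension of $\Z$ by $\bK$ determined by $k\nu^\nilp$ (namely $\pi^\ast\bL$) splits after tensoring with $\R$. The step I expect to require the most care is checking that $k\nutilde^\nilp|_\Delta$ really is the period section of the residual variation and that constancy of $k\nu^\nilp$ along the punctured-disc fibres of $L'\to\Delta$ is precisely what forces $k\nutilde^\nilp$ to be pulled back from $\Delta$; both come down to the monodromy around $\Delta$ being trivial on $\bK=\ker N$ and the absence of winding of the section around the puncture, which is transparent from the construction in Theorem~\ref{thm:extension}.
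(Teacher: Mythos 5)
Your proposal is correct and follows essentially the same route as the paper: deduce $\rank_\Delta^\perp\nu=0$ from Proposition~\ref{prop:ranks}(\ref{item:inequality}), invoke part (3) to get $k\nu^\nilp$ as a section of $J(\bK_\R)$ constant on the fibers of $\pi$, descend it to a section $\nubar$ over $\Delta$ (the paper uses exactly this fiberwise-constancy argument), and conclude the real splitting from Remark~\ref{rem:real-jac}. The extra details you supply (identifying $\nubar$ with the restriction of $k\nutilde^\nilp$ to the zero section and its interpretation via the residual variation of Remark~\ref{rem:biext}) are consistent with, and merely elaborate on, the paper's terser argument.
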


\begin{proof}
If $k\nu^\nilp$ is a section of the subtorus $J(\bK_\R)$ of $J(\bV^\nilp)$, it is the pullback along $L' \to \Delta$ of a section of $J(\bK_\R)$ over $\Delta$ as it is constant over each $L_x'$. The second assertion follows from Remark~\ref{rem:real-jac}.
\end{proof}

\part{Higher genus}
\label{part:higher-genus}

In this part we use the results of Part~\ref{part:vmhs} and induction to prove that the Ceresa normal function has rank $3g-3$ when $g\ge 4$. The basic idea behind the proof is to approximate the genus $g$ Ceresa normal function by its nilpotent orbit on the normal bundle of the smooth points $\Delta$ of the divisor $\Delta_0$ in $\cMbar_g$. A global monodromy computation allows us to compute the residual normal function. This and the inductive hypothesis imply that the residual normal function $\nu_\Delta$ has rank $3g-4$. The proof is completed by using Corollary~\ref{cor:splitting} to show that the normal rank is 1, which will establish the result.

\section{Behaviour of the Ceresa normal function near $\Delta_0$}
\label{sec:higher_genus}

Suppose that $g > 3$. We will apply the results and constructions of Section~\ref{sec:boundary_normal} with
$$
S = \cM_g,\ \Sbar = \cMbar_g - (\Delta_+\cup \Delta_0^\sing) \text{ and } \Delta = \Delta_0 \cap \Sbar,
$$
where $\Delta_+$ is the union of the boundary divisors $\Delta_j$ where $j>0$. Recall that $L$ is the normal bundle of $\Delta$ in $\cM_g$ and that $L'$ is $L$ with its 0-section removed.

\subsection{A moduli description of $L'$}

A point of $\Delta$ corresponds to a smooth projective curve $C$ of genus $g-1$ together with an {\em unordered} pair $\{p,q\}$ of distinct points of $C$. We will identify $\Delta$ with the quotient of the moduli space $\cM_{g-1,2}$ that parameterizes smooth projective curves of genus $g-1$ with two distinct marked points. The involution $\sigma$ of $\cM_{g-1,2}$ swaps the two points.

The moduli space $\cM_{g-1,\vec{2}}$ is the $\Gm\times\Gm$ torsor over $\cM_{g-1,2}$ whose fiber over $(C;p,q)$ is $T_p' C \times T_q' C$ --- ordered pairs $(\vv_p,\vv_q)$ of non-zero tangent vectors of $C$ at $p$ and $q$. The group $\Gm$ acts on $\cM_{g-1,\vec{2}}$ by
$$
\lambda : (C;\vv_p,\vv_q) \mapsto (C;\lambda \vv_p,\lambda^{-1} \vv_q),\quad \lambda \in \Gm(\C).
$$
A point of $\cM_{g-1,\vec{2}}/\Gm$ corresponds to a 4-tuple $(C;p,q,\vv)$, where $C$ is smooth and projective of genus $g-1$, $(p,q)$ is an {\em ordered} pair of distinct points of $C$, and $\vv$ is a non-zero element of $T_p C\otimes T_q C$.

By deformation theory, there is an \'etale double covering (in the sense of stacks)
$$
\cM_{g-1,\vec{2}}/\Gm \to L'.
$$
The automorphism group of the covering is generated by the involution $\sigma$ that swaps the two points $p$ and $q$.

The pullback of the normal function of the genus $g-1$ Ceresa cycle along $\cM_{g-1,\vec{2}} \to \cM_{g-1}$ descends to a normal function section of $J(\Lambda^3_0 \bH_\Delta)$ over $\Delta$. Denote it by $\nu_0$. We also have the normal function sections $\kappa_p$ and $\kappa_q$ of $J(\bH_\Delta)$ defined by
$$
\kappa_p : (C;\vv_p,\vv_q) \mapsto (2g-4)[p] - K_C \text{ and } \kappa_p : (C;\vv_p,\vv_q) \mapsto (2g-4)[q] - K_C,
$$
where $K_C$ denotes the canonical divisor class of $C$.

\begin{proposition}
\label{prop:ext}
If $g\ge 4$, then
$$
\Ext^1_{\MHS(\Delta)}(\Q,\Lambda^3 \bH_\Delta) \cong \Ext^1_{\MHS(L')}(\Q,\Lambda^3 \bH_\Delta) = \Q\nu_0 \oplus \Q\kappa
$$
where $\kappa := (\kappa_p + \kappa_q)/2$. That is,
$$
\kappa(C;\{p,q\}) = \kappa(C;\vv_p\otimes\vv_q) = (g-2)(p+q) - K_C \in \Jac C.
$$
\end{proposition}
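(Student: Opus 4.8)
The plan is to reduce everything to the cohomology of $\cM_{g-1,2}$ with coefficients in irreducible symplectic local systems, exploiting that the genus $h:=g-1$ is at least $3$, so that Appendix~\ref{app:normal} applies: a normal function over such a base with values in a variation attached to an irreducible $\Sp_h$-representation is determined by its monodromy representation. In particular, for such a $\bW$ and for any of the bases $X$ among $\cM_{h,2}$, $\Delta$, $L'$, the map ``normal function $\mapsto$ monodromy class'' is an injection $\Ext^1_{\MHS(X)}(\Q,\bW)\hookrightarrow H^1(X;\bW_\Q)$. It therefore suffices to bound $H^1$ from above and to exhibit enough normal functions of geometric origin (admissible by Proposition~\ref{prop:anf}) to meet the bound. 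Since $h\ge 3$, contraction with the polarization $\theta$ splits $\Lambda^3\bH$ as a variation of Hodge structure into the two non-isomorphic irreducible pieces $\Lambda^3_0\bH$ and $\bH$; as $\Hom$ between distinct irreducibles and $\Q$ vanishes, $\Ext^1_{\MHS(X)}(\Q,\Lambda^3\bH_\Delta)=\Ext^1_{\MHS(X)}(\Q,\Lambda^3_0\bH_\Delta)\oplus\Ext^1_{\MHS(X)}(\Q,\bH_\Delta)$, so we must compute the two summands over $\Delta$ and over $L'$.

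Second, I would work over $\cM_{h,2}$ and run the Leray spectral sequence for the map $p\colon\cM_{h,2}\to\cM_h$ forgetting both marked points, whose fibre over $[C]$ is $\mathrm{Conf}_2(C)=C\times C\setminus\Delta_C$. The Gysin sequence of the diagonal divisor gives $H^0(\mathrm{Conf}_2(C))=\Q$ and $H^1(\mathrm{Conf}_2(C))\cong H^1(C\times C)\cong H^1(C)^{\oplus 2}\cong \bH_\Q(-1)^{\oplus 2}$ (the residue to $H^0(\Delta_C)(-1)$ dies because the class of the diagonal is nonzero in $H^2(C\times C)$). For $\bW=\Lambda^3_0\bH$: since $\Lambda^3_0\bH\otimes\bH$ has no trivial $\Sp_h$-summand, $E_2^{0,1}=H^0(\cM_h;\Lambda^3_0\bH\otimes\bH_\Q(-1)^{\oplus2})=0$, so $H^1(\cM_{h,2};\Lambda^3_0\bH_\Q)\cong E_2^{1,0}=H^1(\cM_h;\Lambda^3_0\bH_\Q)=\Q$, the pullback of the genus-$h$ Ceresa class $\nu_0$. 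For $\bW=\bH$: since $\bH\otimes\bH=\Lambda^2\bH\oplus S^2\bH$ and $\bH\otimes\bH$ has exactly one trivial $\Sp_h$-summand (spanned by $\theta$), $\dim E_2^{0,1}=2$, while $E_2^{1,0}=H^1(\cM_h;\bH_\Q)=0$; hence $\dim H^1(\cM_{h,2};\bH_\Q)\le 2$. The two algebraic normal functions $\kappa_p\colon(C;p,q)\mapsto(2h-2)[p]-K_C$ and $\kappa_q\colon(C;p,q)\mapsto(2h-2)[q]-K_C$ are linearly independent (their difference is $(2h-2)([p]-[q])\neq0$), so the spectral sequence degenerates and $\Ext^1_{\MHS(\cM_{h,2})}(\Q,\Lambda^3\bH)=\Q\nu_0\oplus\Q\kappa_p\oplus\Q\kappa_q$. (Here I use the known facts $H^1(\cM_h;\bH_\Q)=0$ and $H^1(\cM_h;\Lambda^3_0\bH_\Q)=\Q$ for $h\ge 3$, which follow from the monodromy-determination results of Appendix~\ref{app:normal} together with the non-vanishing of the genus-$h$ Ceresa normal function.)

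Third, I would descend. Since $\Delta=\cM_{h,2}/\sigma$, one has $\Ext^1_{\MHS(\Delta)}(\Q,\Lambda^3\bH_\Delta)=\Ext^1_{\MHS(\cM_{h,2})}(\Q,\Lambda^3\bH)^{\sigma}$ (a finite quotient, so $H^1(\Delta;-)=H^1(\cM_{h,2};-)^\sigma$ over $\Q$, and the admissibility condition is $\sigma$-equivariant). As $\sigma$ fixes $\nu_0$ and interchanges $\kappa_p\leftrightarrow\kappa_q$, the invariants are $\Q\nu_0\oplus\Q(\kappa_p+\kappa_q)=\Q\nu_0\oplus\Q\kappa$ with $\kappa=(\kappa_p+\kappa_q)/2$, which on the fibre is $(h-1)(p+q)-K_C=(g-2)(p+q)-K_C$, as claimed; the remaining class $\kappa_p-\kappa_q$ is $\sigma$-anti-invariant and does not descend, which is why the rank drops from $3$ to $2$. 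Finally, $L'$ is the normal bundle of $\Delta$ with its zero section removed, i.e. a $\Gm$-torsor over $\Delta$, and $\Lambda^3\bH_\Delta$ on $L'$ is pulled back from $\Delta$; since $H^0(\Delta;(\Lambda^3\bH_\Delta)_\Q)=0$, the Gysin sequence of this torsor gives $H^1(L';(\Lambda^3\bH_\Delta)_\Q)\cong H^1(\Delta;(\Lambda^3\bH_\Delta)_\Q)$, and the same monodromy-determination argument, together with the fact that $\nu_0$ and $\kappa$ already live on $L'$, identifies $\Ext^1_{\MHS(L')}(\Q,\Lambda^3\bH_\Delta)$ with $\Q\nu_0\oplus\Q\kappa$ compatibly with $\Ext^1_{\MHS(\Delta)}$.

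The \emph{main obstacle} in this plan is the spectral-sequence step: pinning down $H^1(\mathrm{Conf}_2(C))$ as a variation, invoking the precise values $H^1(\cM_h;\bH_\Q)=0$ and $H^1(\cM_h;\Lambda^3_0\bH_\Q)=\Q$ for $h\ge 3$, and ensuring the higher differentials contribute nothing — which is done indirectly by matching the a priori upper bound against the span of the explicit algebraic normal functions $\nu_0,\kappa_p,\kappa_q$. Everything after that, namely the $\sigma$-quotient and the $\Gm$-torsor comparisons, is formal.
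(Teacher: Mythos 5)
Your proposal reaches the right conclusion and its skeleton coincides with the paper's: both arguments pass to the double cover $\cM_{g-1,2}$ of $\Delta$ (and to $\cM_{g-1,\vec{2}}/\Gm$ for $L'$), use the splitting $\Lambda^3\bH_\Delta(-1)\otimes\Q \cong \Lambda^3_0\bH_\Delta(-1)\oplus\bH_\Delta$, classify normal functions on the cover, and then take invariants under the involution $\sigma$ swapping $p$ and $q$, so that $\kappa_p,\kappa_q$ collapse to $\kappa=(\kappa_p+\kappa_q)/2$ while $\nu_0$ survives. Where you diverge is the middle step: the paper simply quotes the theorem of Appendix~\ref{app:normal}, which already states $\Ext^1_{\MHS(\cM_{h,2})}(\Q,\bH)=\Q\kappa_1\oplus\Q\kappa_2$ and $\Ext^1_{\MHS(\cM_{h,2})}(\Q,\Lambda^3_0\bH(-1))=\Q\nu$ (and the same over $\cM_{h,\vec{2}}$), whereas you use only the injectivity of the class map $\Ext^1\hookrightarrow H^1$ and re-derive the classification via the Leray spectral sequence of $\cM_{h,2}\to\cM_h$ together with the explicit classes $\nu_0,\kappa_p,\kappa_q$. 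That route works and makes the origin of the three classes transparent, but it is redundant given the Appendix, and it imports two extra cohomological inputs: $H^1(\cM_h;\bH_\Q)=0$ and $\dim H^1(\cM_h;\Lambda^3_0\bH_\Q)=1$ for $h\ge 3$. Be careful with how you justify these: they do \emph{not} follow from ``monodromy determination plus non-vanishing of the Ceresa normal function'' --- injectivity of $\Ext^1\to H^1$ says nothing about the size of $H^1$ of the base. They are nonetheless standard (the second is Johnson's theorem, recalled in the Remark of Appendix~\ref{app:normal}; the first is classical, e.g.\ from $H^1(\Sp_h(\Z);H_\Q)=0$ together with $\Hom_{\Sp_h}(\Lambda^3_0 H,H)=0$), so this is a citation issue rather than a gap. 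Your handling of $L'$ via the Gysin sequence of the $\Gm$-torsor and $H^0(\Delta;\bW_\Q)=0$ is fine; the paper instead observes that normal functions over $L'$ pull back to $\sigma$-invariant ones on $\cM_{g-1,\vec{2}}/\Gm$, to which the Appendix again applies via $\cM_{g-1,\vec{2}}$, with $\nu_0$ and $\kappa$ pulled back from $\Delta$ --- both give the same answer.
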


\begin{proof}
Since $g\ge 4$, $\Delta$ and $L'$ are quotients of moduli spaces of curves of genera $\ge 3$. So we can apply the results of Appendix~\ref{app:normal}. More precisely, we see that (after tensoring with $\Q$) the normal function sections of $J(\Lambda^3 \bH_\Delta)$ over $\Delta$ (resp. $L'$) are the normal function sections of its pullback that are invariant under the involution that swaps $p$ and $q$. Since, after tensoring with $\Q$, $\Lambda^3 \bH_\Delta(-1)$ is isomorphic to the direct sum of $\bH_\Delta$ and $\Lambda^3_0 \bH_\Delta(-1)$, the result follows from the classification result in the appendix.
\end{proof}

\subsection{The family of nilpotent orbits over $L'$}

Denote by $\bV^\nilp$ the family of nilpotent orbits over $L'$ associated to the $\Z$ variation $\bV := (\Lambda^3 \bH/\theta\cdot \bH)(-1)$ over $\cM_g$. Denote the fiberwise nilpotent monodromy operator $\bV^\nilp \to \bV^\nilp$ by $N$. It satisfies $N^2 = 0$. Denote the associated relative weight filtration by 
$$
0 \subset M_{-2} \bV^\nilp \subset M_{-1} \bV^\nilp \subset M_0 \bV^\nilp.
$$

Suppose that $(C;p,q,\vv)$ is in $\cM_{g-1,\vec{2}}/\Gm$. Set $C'= C-\{p,q\}$. Denote the homology class of a small positive loop about $p$ in $C'$ by $\ba$ (this is the ``vanishing cycle''). Choose a path in $C$ from $q$ to $p$ and denote its class in $H_1(C,\{p,q\})$ by $\bb$. It is well defined mod $H_1(C)$.

Denote the first order smoothing associated to $\vv$ of the nodal curve obtained from $C$ by identifying $p$ and $q$ by $C_\vv$. This has a limit MHS; it is the fiber of $\bH^\nilp$ over $\vv \in L'$. The monodromy logarithm $N$ takes $\bb$ to $\ba$. The graded quotients of the relative weight filtration are:
$$
\Gr^M_{-2} H_1(C_\vv) = \Q\ba,\ \Gr^M_{-1} H_1(C_\vv) = H_1(C),\ \Gr^M_0 H_1(C_\vv) = \Q\bb.
$$
The involution $\sigma$ that swaps $p$ and $q$ acts on $H_1(C_\vv)$ by
$$
\ba \mapsto -\ba \text { and } \bb \mapsto -\bb \bmod M_{-1}H_1(C_\vv).
$$

Denote the weight $-1$ polarized variation of HS over $L'$ with fiber $H_1(C)$ over $(C;\{p,q\},\vv)$ by $\bH_\Delta$. It is pulled back from $\Delta$. The $M_\bdot$ graded quotients of $\bH^\nilp$ over $\cM_{g-1,\vec{2}}/\Gm$ are
$$
\Gr^M_{-2} \bH^\nilp = \Q\ba ,\ \Gr^M_{-1}\bH^\nilp = \bH_\Delta,\ \Gr^M_{0} \bH^\nilp = \Q\bb.
$$
Denote the symplectic forms of $\bH_\Delta$ and $\Gr^M_\bdot\bH^\nilp$ by $\theta_\Delta \in \Lambda^2 \bH_\Delta$ and $\theta \in \Gr^M_{-2}\Lambda^2 \bH$, respectively. We have
$$
\theta = \theta_\Delta + \ba \wedge \bb \text{ in } \Gr^M_{-2} \Lambda^2 \bH^\nilp.
$$
Set $\Lambda^2_0 \bH_\Delta = \Lambda^2 \bH_\Delta/\theta_\Delta$. It is an irreducible local system over $\Delta$ and $L'$.

\begin{proposition}
\label{prop:GrM_V}
The $M_\bdot$ graded quotients of the pullback of $\bV^\nilp$ to the double cover $\cM_{g-1,\vec{2}}/\Gm$ of $L'$ are:
$$
\Gr^M_{-2} \bV = \ba \cdot \Lambda^2_0 \bH_\Delta(-1),\ \Gr^M_{-1} \bV \cong \Lambda^3 \bH_\Delta(-1),\ \Gr^M_{0} \bV = \bb \cdot \Lambda^2_0 \bH_\Delta(-1).
$$
\end{proposition}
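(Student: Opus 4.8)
The plan is to reduce the assertion to a computation on graded pieces, using two inputs. The first is that for a nilpotent orbit the relative weight filtration $M_\bdot$ of a multilinear construction, or of a subquotient, is induced in the obvious way from that of the factors; equivalently, $\Gr^M_\bdot$ is an exact functor that is compatible with $\otimes$ and with $\Lambda^k$ (this rests on the $\mathfrak{sl}_2$-theory behind the monodromy weight filtration; see \cite{steenbrink-zucker,schmid}). The second is the description of $\Gr^M_\bdot\bH^\nilp$ and the identity $\theta = \theta_\Delta + \ba\wedge\bb$ in $\Gr^M_{-2}\Lambda^2\bH^\nilp$ recorded above. Over $\cM_g$ there is a short exact sequence $0\to\bH(1)\to\Lambda^3\bH\to\Lambda^3_0\bH\to 0$ of variations between pure variations of weight $-3$, in which the first map is $\theta\wedge(-)$; since it is a morphism of variations, both its injectivity and the strictness of $\Gr^M_\bdot$ persist for the associated nilpotent orbits on the double cover $\cM_{g-1,\vec{2}}/\Gm$, where $\ba$, $\bb$ and $\theta_\Delta$ are globally defined. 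Throughout we work on that cover, where the computation is purely one of linear algebra on the fibers.

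First I would compute $\Gr^M_\bdot\Lambda^3\bH^\nilp \cong \Lambda^3(\Q\ba\oplus\bH_\Delta\oplus\Q\bb)$. Since $\Q\ba$ and $\Q\bb$ are one-dimensional, the nonzero graded pieces are $\ba\cdot\Lambda^2\bH_\Delta$ in weight $-4$, $\Lambda^3\bH_\Delta\oplus(\ba\wedge\bb)\cdot\bH_\Delta$ in weight $-3$, and $\bb\cdot\Lambda^2\bH_\Delta$ in weight $-2$. Next I would identify $\Gr^M_\bdot(\theta\cdot\bH^\nilp)$: by strictness it is the image of $\Gr^M_\bdot$ applied to $\theta\wedge(-)\colon\bH^\nilp(1)\to\Lambda^3\bH^\nilp$. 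In weight $-4$ it is spanned by $\theta\wedge\ba\equiv\theta_\Delta\wedge\ba$ (the term $\ba\wedge\bb\wedge\ba$ vanishes); in weight $-2$ by $\theta\wedge\bb\equiv\theta_\Delta\wedge\bb$; and in weight $-3$ it is the graph $\{(\theta_\Delta\wedge h,\ (\ba\wedge\bb)\otimes h):h\in\bH_\Delta\}$ of the map $h\mapsto\theta_\Delta\wedge h$. Taking quotients weight by weight then gives: in weights $-4$ and $-2$ one divides $\ba\cdot\Lambda^2\bH_\Delta$ and $\bb\cdot\Lambda^2\bH_\Delta$ by the lines $\ba\cdot\theta_\Delta$ and $\bb\cdot\theta_\Delta$, obtaining $\ba\cdot\Lambda^2_0\bH_\Delta$ and $\bb\cdot\Lambda^2_0\bH_\Delta$; in weight $-3$ the cokernel of the graph embedding is identified with $\Lambda^3\bH_\Delta$ via $(x,(\ba\wedge\bb)\otimes h)\mapsto x-\theta_\Delta\wedge h$ --- the full exterior power, not the primitive quotient, since only the diagonal copy of $\bH_\Delta$ has been removed.

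Finally I would apply the Tate twist $(-1)$, which raises the weight indices by $2$ and thereby relabels the three graded pieces of the weight $-1$ variation $\bV^\nilp$ as $\Gr^M_{-2}$, $\Gr^M_{-1}$ and $\Gr^M_0$; this yields precisely the three isomorphisms in the statement. The main obstacle is organizational rather than conceptual: one must check that the formula $\theta=\theta_\Delta+\ba\wedge\bb$ holds exactly in $\Gr^M_{-2}\Lambda^2\bH^\nilp$, with no lower-weight corrections --- automatic, since $\theta$ is $N$-invariant of weight $-2$ and hence lies exactly in $M_{-2}$ --- and that $\theta\wedge(-)$ stays injective on the nilpotent orbit, which follows from the pure case because in the canonical-extension trivialization $\theta$ is the constant symplectic form on each fiber $H_1(C_\vv)$ and wedging with it is the usual injection $H_1(C_\vv)(1)\hookrightarrow\Lambda^3H_1(C_\vv)$. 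The heart of the argument is simply that the relative weight filtration interacts with $\Lambda^3$ and with the primitive-quotient construction in the naive way.
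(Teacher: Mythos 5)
Your proposal is correct and is essentially the paper's own argument: both compute $\Gr^M_\bdot\Lambda^3\bH^\nilp$ from $\Gr^M_\bdot\bH^\nilp = \Q\ba\oplus\bH_\Delta\oplus\Q\bb$, identify the graded image of $\theta\cdot\bH^\nilp$ using $\theta=\theta_\Delta+\ba\wedge\bb$ (in particular the diagonal embedding of $\bH_\Delta$ into $\theta_\Delta\cdot\bH_\Delta\oplus(\ba\wedge\bb)\cdot\bH_\Delta$ in weight $-3$), and take quotients before twisting by $(-1)$. Your added remarks on exactness/strictness of $\Gr^M_\bdot$ and on the injectivity of $\theta\wedge(-)$ merely spell out what the paper leaves implicit.
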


Here we are regarding $\bH_\Delta$ as a variation over $L'$ by pulling it back along the projection $L' \to \Delta$.

\begin{proof}
Since
$$
\theta \cdot \Gr^M_j\bH^\nilp \cong
\begin{cases}
\Q\bb\cdot\theta_\Delta & j=-2,\cr
\theta \cdot \bH_\Delta & j=-3, \cr
\Q\ba\cdot\theta_\Delta & j=-4
\end{cases}
$$
we have
$$
\Gr^M_j \Lambda^3 \bH^\nilp =
\begin{cases}
\Q\bb\cdot\Lambda^2 \bH_\Delta & j=-2,\cr
\Lambda^3 \bH_\Delta + (\ba\wedge\bb) \cdot \bH_\Delta & j=-3, \cr
\Q\ba\cdot\Lambda^2 \bH_\Delta & j=-4.
\end{cases}
$$
The inclusion $\theta\cdot \Gr^M_{-1}\bH^\nilp \hookrightarrow \Lambda^3 \Gr^M_{-1}\bH^\nilp$ takes $\theta\cdot \bH_\Delta$ diagonally into the two copies $\theta_\Delta \cdot \bH_\Delta$ and $(\ba\wedge\bb)\cdot \bH_\Delta$ of $\bH_\Delta$. The result follows.
\end{proof}

\subsection{Computation of $\nu^\nilp$}

Here we assume that $g\ge 4$. Since $N^2=0$, $\bK = M_{-1}\bV^\nilp$. As explained in Section~\ref{sec:boundary_normal}, $\nu^\nilp$ lifts to a section $\nutilde^\nilp$ of $J(M_{-1}\bV^\nilp)$ and projects to a normal function $\nu_\Delta$, which is a section of
$$
J(\Gr^M_{-1}\bV^\nilp) = J(\Lambda^3 \bH_\Delta(-1)).
$$
over $\Delta$.

The first step in understanding $\nu^\nilp$ is to compute the residual normal function $\nu_\Delta$. Since $g-1 \ge 3$ and since
$$
\Lambda^3 \bH_\Delta (-1) \cong \Lambda^3_0 \bH_\Delta(-1) \oplus \bH_\Delta
$$
Proposition~\ref{prop:ext} implies that $\nu_\Delta$ is a linear combination of the normal function $\nu_0$ and $\kappa$ defined there.

\begin{proposition}
\label{prop:residual}
The residual normal function is a linear combination 
$$
\nu_\Delta = \nu_0 + c \kappa \in J(\Lambda^3_0 \bH_\Delta(-1)) \oplus J(\bH_\Delta)
$$
where $c\neq 0$.
\end{proposition}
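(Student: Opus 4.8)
The plan is to verify that $\nu_\Delta$, which by Proposition~\ref{prop:ext} is necessarily of the form $a\nu_0 + c\kappa$ with $a,c\in\Q$, has $a=1$ and $c\neq 0$, by computing its monodromy representation. Since $g-1\ge 3$, Appendix~\ref{app:normal} applies to $\Delta$, so $\nu_\Delta$ is determined by its class in
$$
H^1(\Delta,\Lambda^3\bH_\Delta(-1)) = H^1(\Delta,\Lambda^3_0\bH_\Delta(-1))\oplus H^1(\Delta,\bH_\Delta).
$$
By Theorem~\ref{thm:extension} and Definition~\ref{def:residual_nf}, this class is obtained from the class of $\nu$ in $H^1(\cM_g,\Lambda^3_0\bH(-1))$ by restricting along the monodromy map $\pi_1(U')\to\pi_1(\cM_g)$ of a deleted tubular neighbourhood $U'\cong L'$ of $\Delta$ and then passing from $\bV^\nilp$ to $\Gr^M_{-1}\bV^\nilp=\bV_\Delta$; this last step makes sense because $N$ annihilates $\Gr^M_{-1}\bV^\nilp$, so that class descends to $\Delta$. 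I will carry out the computation on the \'etale double cover $\cM_{g-1,\vec{2}}/\Gm\to L'$ and impose $\sigma$-invariance at the end.

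The key input is that the class of $\nu$ restricts on Torelli space to twice the Johnson homomorphism $\tau_g\colon\mathcal I_g\to\Lambda^3 H_g/H_g$, where $H_g=H_1(\Sigma_g)$ (see \cite[Thm.~11.1]{hain:msri}), so the computation reduces to evaluating $\tau_g$ on mapping classes of $\Sigma_g$ supported near the non-separating node, i.e.\ on the image in $\mathcal I_g$ of (a cover of) the mapping class group of $\Sigma_{g-1}$ with the two glued points and their tangent data. Write $H_g=H_{g-1}\oplus\Q\ba\oplus\Q\bb$, with $\ba$ the vanishing cycle and $\ba\cdot\bb=1$, so that $\theta=\theta_\Delta+\ba\wedge\bb$ exactly as in the proof of Proposition~\ref{prop:GrM_V}. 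Johnson's wedge formulas express $\tau_g$ of such a class as a sum of terms in $\Lambda^3 H_{g-1}$, $\ba\wedge\Lambda^2 H_{g-1}$ and $(\ba\wedge\bb)\wedge H_{g-1}$, the remaining components being excluded because these classes fix $\ba$ and the flag it determines. Reducing to $\Gr^M_{-1}\bV^\nilp$ kills the $\ba\wedge\Lambda^2 H_{g-1}$-terms and sends $(\ba\wedge\bb)\wedge x$ with $x\in H_{g-1}$ to $-\theta_\Delta\wedge x$, i.e.\ to the image of $x$ under the Pontryagin inclusion $\bH_\Delta\hookrightarrow\Lambda^3\bH_\Delta(-1)$. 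Hence the $\Lambda^3_0\bH_\Delta(-1)$-component of $\nu_\Delta$ records the $\Lambda^3 H_{g-1}$-part of the restricted $\tau_g$; restricting further to the genuine genus-$(g-1)$ Torelli subgroup and using the standard functoriality of the Johnson homomorphism under adjoining a handle away from the support identifies this with $\tau_{g-1}$, the monodromy of $\nu_0$, so $a=1$. The $\bH_\Delta$-component records the $(\ba\wedge\bb)\wedge H_{g-1}$-part, which by $\Sp_{g-1}$-equivariance and Proposition~\ref{prop:ext} must be a multiple of the monodromy of $\kappa$; that multiple is $c$.

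The main obstacle is showing $c\neq 0$, i.e.\ that the restricted Johnson homomorphism has a genuinely nonzero $(\ba\wedge\bb)\wedge H_{g-1}$-component. I would do this by producing one explicit mapping class of $\Sigma_g$ supported near the node — for instance a bounding-pair map that drags a non-separating curve $\gamma\subset\Sigma_{g-1}$ once around the handle — and applying Johnson's explicit formula to see that $\tau_g$ of it contains a term proportional to $(\ba\wedge\bb)\wedge[\gamma]$ with nonzero coefficient; its $\Gr^M_{-1}$-reduction is then a nonzero multiple of the Pontryagin class of $[\gamma]$, forcing $c\neq 0$. Alternatively one can argue geometrically: in a one-parameter degeneration with central fibre $C$ (genus $g-1$) with $p$ glued to $q$, the semiabelian limit of $\Jac C_t$ is an extension of $\Jac C$ by $\Gm$, the limit cycle of $C_t$ projects to the Abel--Jacobi image of $C$ based at $p$ (respectively $q$), and the known formula for the $\bH$-part of the basepoint-$x$ Ceresa normal function, a nonzero multiple of $(2g-4)[x]-K_C$, identifies the $\bH_\Delta$-component of the $\sigma$-symmetrization with a nonzero multiple of $(g-2)(p+q)-K_C=\kappa$. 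Either route gives $c\neq 0$. I expect the bookkeeping in Johnson's formula near the node — keeping careful track of which homology classes are $\ba$, which are $\bb$, and which are genuine $H_{g-1}$-classes, together with the precise realization of the handle mapping class group as a cover of $\pi_1(L')$ — to be the most delicate and error-prone part.
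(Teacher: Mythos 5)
Your proposal is correct in outline and shares the paper's overall strategy: both reduce the problem, via Proposition~\ref{prop:ext} and the fact that such normal functions are determined by their monodromy (Appendix~\ref{app:normal}), to computing the restriction of the Ceresa class (twice the Johnson homomorphism) to the monodromy near $\Delta_0$ and reading off its $\Lambda^3_0\bH_\Delta$- and $\bH_\Delta$-components after reduction mod $M_{-2}$. Where you differ is in the execution of the key nonvanishing $c\neq 0$. The paper never evaluates the Johnson homomorphism on any explicit element: it passes to the Lie algebras $\g_S$, $\g_{L'}$, $\g_\Delta$ of relative completions, tabulates the $(W_\bdot,M_\bdot)$-bigraded quotients, and uses $\sp(H_\Delta)$-equivariance together with Schur's lemma to conclude that the image of $\g_\Delta$ in $\End E_\Delta$ is an extension of $\sp(H_\Delta)$ by the full $H_\Delta\oplus\Lambda^3_0 H_\Delta$, so that both coefficients are forced to be nonzero. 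You instead propose an explicit computation: the point-push of $q$ around a nonseparating $\gamma\subset C$ is a bounding-pair map in the image of $\pi_1(L')$ whose Johnson image is (up to sign) $\ba\wedge\bb\wedge[\gamma]$, which survives to a nonzero multiple of the Pontryagin image of $[\gamma]$ in $\Gr^M_{-1}$, while the $\nu_0$-cocycle is insensitive to point-pushing and the $\kappa$-cocycle is not; your alternative via the semiabelian degeneration and the $\bH$-component of the pointed Ceresa function makes the same point geometrically. This buys concreteness (and would in principle yield the value of $c$, which neither you nor the paper pins down, just as neither carefully normalizes the coefficient of $\nu_0$), at the cost of exactly the bookkeeping you flag: checking the chosen element lies in the image of $\pi_1(\cM_{g-1,\vec{2}}/\Gm)$ with its tangential data, that evaluation on Torelli elements suffices given the $\Sp(H_\Delta)$-equivariant classification, and the boundary-versus-closed subtleties in Johnson's formulas. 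The paper's completion formalism handles these cocycle-versus-class and filtration issues automatically, which is precisely why it is phrased that way; your route is a legitimate, more elementary substitute, but the delicate steps you defer are the real content and would need to be carried out to make it a proof.
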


\begin{proof}[Sketch of proof]
The exact linear combination is easy to compute once one fixes an $\Sp(H_\Delta)$ splitting of $\Lambda^3 \bH_\Delta$. All that matters to us here, though, is that $c$ is non-zero. For this reason, we will not specify a splitting. To establish the result, it is sufficient to work with variations of $\Q$-MHS.

I will assume familiarity with relative completion of mapping class groups \cite{hain:torelli}. Fix $\vv \in L_x'$. We will use it as a base point of $\cM_g$ and $L'$ and $x$ as a base point of $\Delta$. Denote the fibers of $\bE$ and $\bH$ over $\vv$ by $E$ and $H$, and the fibers of $\bE_\Delta$ and $\bH_\Delta$ over $x$ by $E_\Delta$ and $H_\Delta$.

Recall that in this section $S=\cM_g$. Denote the Lie algebra of the completion of $\pi_1(\cM_g,\vv)$ with respect to the standard homomorphism $\pi_1(\cM_g,\vv) \to \Sp(H)$ by $\g_S$. Since the MHS on it is a limit MHS, it has two weight filtrations, $W_\bdot$ and $M_\bdot$. The MHS on $(\g_S,M_\bdot)$ is filtered by $W_\bdot$. Denote the Lie algebra of the relative completion of $\pi_1(L',\vv)$ with respect to the homomorphism $\pi_1(L',\vv) \to \Sp(H_\Delta)$ by $\g_{L'}$. We will denote its weight filtration by $M_\bdot$ instead of $W_\bdot$. We do this because, with this weight filtration, $\g_{L'} \to \g_S$ is a morphism of MHS. Denote the Lie algebra of the relative completion of $\pi_1(\Delta,x)$ with respect $\pi_1(\Delta,x) \to \Sp(H_\Delta)$ by $\g_\Delta$. We will denote its weight filtration by $M_\bdot$. (You can think of it as having two weight filtrations $M_\bdot$ and $W_\bdot$ which are equal.)

The homomorphisms
$$
\begin{tikzcd}
\pi_1(\Delta,x) & \ar[l] \pi_1(L',\vv) \ar[r] & \pi_1(S,\vv)
\end{tikzcd}
$$
induce Lie algebra homomorphisms
$$
\begin{tikzcd}
\g_\Delta & \ar[l] \g_{L'} \ar[r] & \g_S
\end{tikzcd}
$$
which are morphisms of MHS with respect to the weight filtration $M_\bdot$.

The monodromy representations of the normal functions $\nu$, $\nu^\nilp$ and $\nu_\Delta$ are related by the commutative diagram
$$
\begin{tikzcd}
& \pi_1(\cM_{g-1,\vec{2}},\vv) \ar[r]\ar[d,hookrightarrow] &  \pi_1(\cM_{g-1,2},x) \ar[d,hookrightarrow] \\
\pi_1(\cM_g,\vv) \ar[d,"\nu_\ast"] & \ar[l] \pi_1(L',\vv) \ar[r]\ar[d,"(\nu^\nilp)_\ast"] &  \pi_1(\Delta,x) \ar[d,"(\nu_0)_\ast"] \\
W_0\Aut E \ar[r,hookleftarrow] &  A \ar[r,"\bmod M_{-2}"] & M_{-1}\Aut E_\Delta
\end{tikzcd}
$$
where $A$ is the subgroup of $\Aut E$ whose Lie algebra is\footnote{This makes sense as there is (by representation theory of $\sp(H_\Delta)$) a global splitting
$$
\Gr^M_0 \End E = \Q(0) \oplus W_{-1} \Gr^M_0 E
$$
as $W_{-1} \Gr^M_0 E = \Lambda^2_0 H_\Delta (-1)$.}
$$
\fa = \ker\{M_0 \End E \to \End(\Gr^M_0 E) \to W_{-1} \Gr^M_0 E\}.
$$
These induce Lie algebra homomorphisms
\begin{equation}
\label{eqn:comm-diag}
\begin{tikzcd}
\g_S/M_{-3} \ar[d] & \ar[l] \g_{L'}/M_{-3} \ar[r]\ar[d] &  \g_\Delta/M_{-3} \ar[d] \\
W_0\End E \ar[r,hookleftarrow] &  \fa \ar[r,"\mod M_{-2}"] & \End E_\Delta
\end{tikzcd}
\end{equation}
The Lie algebra $\fa$ has a natural MHS induced by that of $\End E$. The homomorphisms in the diagram are morphisms of MHS with respect to the weight filtration $M_\bdot$. Exactness of $\Gr^M_\bdot$ implies that these are determined by the induced maps on the $M_\bdot$ graded quotients. Each $\Gr^M_\bdot \g_X$ is an $\sp(H_\Delta)$ module and all morphisms between them, in this proof, are $\sp(H_\Delta)$ equivariant.

The monodromy of the Ceresa cycle over $\cM_g$ induces (and is determined by) the $\Sp(H)$ equivariant isomorphism
$$
\Gr^W_{-1} \g_S \to \Lambda^3_0 H
$$
(This is the ``Johnson homomorphism''.) It corresponds to the $\sp(H)$ invariant isomorphism
\begin{equation}
\label{eqn:inf-action}
\Gr^W_\bdot \g_S/W_{-2} \overset{\simeq}{\longrightarrow} W_0 \End \Gr^W_\bdot E \cong \sp(H)\ltimes \Lambda^3_0 H
\end{equation}
which is a morphism of MHS with respect to $M_\bdot$. We have to compute its restriction to $\g_{L'}$ and its projection to $\End E_\Delta$.

Proposition~\ref{prop:GrM_V} implies that the $\Gr^M_\bdot\Gr^W_\bdot$ quotients of $\g_S/W_{-2}$ are:
$$
\begin{tikzpicture}
\draw (-5.25,0.5) -- (5.5,0.5);
\draw (-3.75,-1.25) -- (-3.75,1.25);
\matrix[matrix of math nodes,row sep=2mm,column sep=5mm]
{
W_\bdot\bs M_\bdot & -2 & -1 & 0 & 1 & 2 \\
0 & \Q\ba^2 & \ba\cdot H_\Delta & \sp(H_\Delta) & \bb\cdot H_\Delta & \Q \bb^2 \\
-1 & \Lambda^2_0 H_\Delta & \Lambda^3 H_\Delta & \Lambda^2_0 H_\Delta\\
};
\end{tikzpicture}
$$
The bigraded quotients of $E$ are
$$
\begin{tikzpicture}
\draw (-4,0.5) -- (4,0.5);
\draw (-2.3,-1.25) -- (-2.3,1.25);
\matrix[matrix of math nodes,row sep=2mm,column sep=5mm]
{
W_\bdot\bs M_\bdot & -2 & -1 & 0 \\
0 & &  & \Q \\
-1 & \Lambda^2_0 H_\Delta & H_\Delta \oplus \Lambda^3_0 H_\Delta & \Lambda^2_0 H_\Delta\\
};
\end{tikzpicture}
$$
and of $\fa$ are
\begin{equation}
\label{eqn:graded-a}
\begin{tikzpicture}
\draw (-4.8,0.5) -- (4.75,0.5);
\draw (-3,-1.25) -- (-3,1.25);
\matrix[matrix of math nodes,row sep=2mm,column sep=5mm]
{
W_\bdot\bs M_\bdot & -2 & -1 & 0 \\
0 & &  & \sp(H_\Delta) \\
-1 & \Q(1) \oplus \Lambda^2_0 H_\Delta & H_\Delta \oplus \Lambda^3_0 H_\Delta & \\
};
\end{tikzpicture}
\end{equation}
The action of $\g_S/W_{-2}$ on $E$ is determined by the bigraded action
$$
\Gr^M_\bdot \Gr^W_\bdot \g_S \to \End(\Gr^M_\bdot \Gr^W_\bdot E).
$$
The isomorphism (\ref{eqn:inf-action}) implies that the action of $\g_S/W_{-2}$ on $E$ and its associated bigraded version are both faithful. The $\sp(H_\Delta)$ invariance of the action and Schur's Lemma determine the bigraded action up to scaling on each bigraded summand. This will enable us to compute the action of $\Gr^M_\bdot\g_L$ on $\Gr^M_\bdot E$.

Before doing this, note that the summand $\Q\ba^2$ is spanned by the logarithm of the Dehn twist that corresponds to a small loop that encircles $\Delta$. It is the image of the logarithm of the loop in $L_x'$ that encircles the origin.

The $\Gr^M_\bdot$ quotients of the image of $\g_{L'}/M_{-3}$ in $\Gr^M_\bdot \g_S/W_{-3}$, and thus in $\End \Gr^M_\bdot E$ as well, are:
$$
\begin{tikzpicture}
\draw (-5,0) -- (4.5,0);
\draw (-3.65,-.9) -- (-3.65,.9);
\matrix[matrix of math nodes,row sep=2mm,column sep=5mm]
{
M_\bdot & -2 & -1 & 0  \\
& \Q\ba^2 \oplus \Lambda^2_0 H_\Delta  & H_\Delta \oplus \Lambda^3_0 H_\Delta  & \sp(H_\Delta) &  \\
};
\end{tikzpicture}
$$

Since the homomorphism $\g_{L'} \to \g_\Delta$ is surjective, the diagram (\ref{eqn:comm-diag}) and the computation (\ref{eqn:graded-a}) imply that the image of $\g_\Delta \to \End E_\Delta$ is an extension of $\sp(H_\Delta)$ by $H_\Delta \oplus \Lambda^3_0 H_\Delta$. The result now follows from Proposition~\ref{prop:ext}. The copy of  $\Lambda^3_0 H_\Delta$ in the image corresponds to $\nu_0$ and the copy of $H_\Delta$ to $\kappa$.
\end{proof}

\begin{remark}
\label{rem:monodromy}
A consequence of the proof is that the image of $\g_\Delta$ in $\End K$ is an extension of $\sp(H_\Delta)$ by the two step nilpotent Lie algebra whose associated graded is generated by $H_\Delta\oplus \Lambda^3_0 H_\Delta$ in weight $-1$ and whose weight $-2$ graded quotient is $\Lambda^2_0 H_\Delta$. The Lie subalgebra generated by $H_\Delta$ is non-abelian and also has weight $-2$ graded quotient $\Lambda^2_0 H_\Delta$.
\end{remark}

\begin{corollary}
\label{cor:boundary-rk}
We have $\rank \nu_\Delta \ge \rank \nu_0 + 2$.
\end{corollary}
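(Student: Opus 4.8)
The plan is to read off $\rank\nu_\Delta$ from the decomposition $\nu_\Delta = \nu_0 + c\kappa$ of Proposition~\ref{prop:residual}, reducing everything to a derivative computation for the auxiliary normal function $\kappa$. Since $\kappa$ is valued in $J(\bH_\Delta)$ and $\nu_0$ in $J(\Lambda^3_0\bH_\Delta(-1))$, applying $\Gr_F^{-1}$ to the (rational) splitting $\Lambda^3\bH_\Delta(-1)\cong\Lambda^3_0\bH_\Delta(-1)\oplus\bH_\Delta$ gives, at each $x\in\Delta$, a direct sum decomposition $\nablabar_x\nu_\Delta = \nablabar_x\nu_0\oplus c\,\nablabar_x\kappa$ of $\C$-linear maps on $T_x\Delta$. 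Because $c\ne 0$ we have $\ker\nablabar_x\nu_\Delta = \ker\nablabar_x\nu_0\cap\ker\nablabar_x\kappa$, so a dimension count yields
$$
\rank_x\nu_\Delta = \rank_x\nu_0 + \rank\bigl(\nablabar_x\kappa\big|_{\ker\nablabar_x\nu_0}\bigr).
$$
It therefore suffices to produce a point $x$ at which $\rank_x\nu_0 = \rank\nu_0$ and $\nablabar_x\kappa$ has rank $\ge 2$ on $\ker\nablabar_x\nu_0$.

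First I would identify a large subspace of $\ker\nablabar_x\nu_0$. The normal function $\nu_0$ is, by construction, the pullback of the genus $g-1$ Ceresa normal function along the forgetful map $\Delta\to\cM_{g-1}$, $(C;p,q)\mapsto[C]$; hence $\nablabar_x\nu_0$ factors through the surjection $T_x\Delta\to T_{[C]}\cM_{g-1}$, whose kernel is the $2$-dimensional ``move-the-points'' subspace $T_pC\oplus T_qC\subseteq T_x\Delta$ (at a general $x$, where $\Delta$ is locally $\cM_{g-1,2}$). Thus $\ker\nablabar_x\nu_0\supseteq T_pC\oplus T_qC$, with equality exactly when the genus $g-1$ Ceresa derivative is injective at $[C]$. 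Also, $\rank_x\nu_0$ depends only on $[C]$, and its maximum over $\Delta$ is $\rank\nu_0$.

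Next I would compute $\nablabar_x\kappa$ on $T_pC\oplus T_qC$. The normal function $\kappa$ assigns to $(C;p,q)$ the degree-$0$ class $(g-2)([p]+[q])-K_C\in\Jac C = J(H_1(C))$, so along the move-the-points directions --- where $C$, and with it $K_C$, is held fixed --- its canonical derivative is simply $(g-2)$ times the sum of the differentials of the Abel--Jacobi maps $\mu_p,\mu_q$ at $p$ and $q$. By the classical description of $d\mu_p$ as the natural map $T_pC\to T_0\Jac C = \Gr_F^{-1}H_1(C) = A$ with image the line over the canonical point $\phi_K(p)\in\P(A)$, the restriction $\nablabar_x\kappa|_{T_pC\oplus T_qC}$ has image the span of the lines over $\phi_K(p)$ and $\phi_K(q)$; since $g\ge 4 > 2$, it has rank $2$ whenever $\phi_K(p)\ne\phi_K(q)$. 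For $C$ non-hyperelliptic --- in particular for a general curve, as $g-1\ge 3$ --- the canonical map is an embedding, so $\phi_K(p)\ne\phi_K(q)$ for every pair of distinct points, and such pairs are exactly what the points of $\Delta$ carry.

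To conclude, pick $x=(C;p,q)$ with $C$ a general (hence non-hyperelliptic) curve of genus $g-1$ at which the genus $g-1$ Ceresa normal function attains its maximal rank; this is a dense open condition on $[C]$, and since $\rank_x\nu_0$ depends only on $[C]$ it forces $\rank_x\nu_0 = \rank\nu_0$, while the previous step gives $\rank(\nablabar_x\kappa|_{\ker\nablabar_x\nu_0})\ge 2$. The displayed identity then yields $\rank\nu_\Delta\ge\rank_x\nu_\Delta\ge\rank\nu_0+2$. The one place that needs care is the middle step: confirming that the canonical derivative of $\kappa$ in the fibre directions is precisely the Abel--Jacobi differential and that it takes values in $\Gr_F^{-1}\bH_\Delta$. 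This is, however, just the standard computation behind Griffiths' infinitesimal-invariant formula applied to a family of degree-$0$ divisors, so I do not anticipate real difficulty there.
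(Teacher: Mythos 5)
Your proposal is correct and follows essentially the same route as the paper: it splits $T_x\Delta$ into the ``move-the-points'' directions $T_pC\oplus T_qC$ and the directions deforming $C$, uses $\nu_\Delta=\nu_0+c\kappa$ with $c\neq 0$, notes that $\nablabar\nu_0$ factors through $T_{[C]}\cM_{g-1}$, and identifies $\nablabar\kappa$ on the fibre directions with (a multiple of) the Abel--Jacobi differential, which has rank $2$ at a suitable point. The paper packages this as a commutative diagram of short exact sequences rather than your kernel-intersection identity, but the content is the same.
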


\begin{proof}
Suppose that $x\in \Delta$ is the moduli point of $(C;\{p,q\})$. Since the diagram
$$
\begin{tikzcd}[column sep=scriptsize]
0 \ar[r] & T_p C \oplus T_q C \ar[r] \ar[d,"c(d\kappa|_{C^2})_{\{p,q\}}"] & T_x \Delta \ar[r] \ar[d,"(d\nu_\Delta)_x"] & T_C \cM_{g-1} \ar[r] \ar[d,"(d\nu_0)_C"] & 0\\
0 \ar[r] & T_{\kappa(x)} \Jac C \ar[r] & T_{\nu_\Delta(x)} J(\Lambda^3 H_\Delta) \ar[r] & T_{\nu_0(x)} J(\Lambda^3_0 H_\Delta) \ar[r] & 0
\end{tikzcd}
$$
commutes, where $c$ is the constant appearing in Proposition~\ref{prop:residual}, we see that $\rank \nu_\Delta \ge \rank \nu_0 + \rank \kappa$. The result follows as, for generic $(p,q) \in C^2$, $d\kappa|_{C^2}$ has rank 2 at $(p,q)$.
\end{proof}

\section{Proof of Theorem~\ref{thm:max-rk}}

The first step is to show $\nu^\nilp$ cannot be a section of $J(\bK_\R)$. We do this by restriction to a curve in $\Delta$. Let $C$ be a smooth curve of genus $g-1$ and $p$ a point of $C$. Set $C_p' = C-\{p\}$. There is a morphism $C_p' \to \Delta$ that takes $q \in C$ to the moduli point of the nodal curve $\Cbar_q$ obtained from $C$ by identifying $p$ with $q$. The inclusion $C_p' \hookrightarrow \Delta$ induces a group homomorphism $\pi_1(C_p',q) \to \pi_1(\Delta,x)$, where $x$ is the point of $\Delta$ that corresponds to $\Cbar_q$. This induces a Lie algebra homomorphism $\p(C_p') \to \g_\Delta$ from the Lie algebra of the unipotent completion of $\pi_1(C_p',q)$ to $\g_\Delta$. This homomorphism is an injective morphism of MHS, \cite{hain:torelli}. In particular, $\p(C_p')/W_{-3} \to \g_\Delta/W_{-3}$ is injective. The weight graded quotients of $\p(C_p')/W_{-3}$ are $H_\Delta$ in weight $-1$ and $\Lambda^2 H_\Delta$ in weight $-2$. The bracket $H_\Delta \otimes H_\Delta \to \Lambda^2 H_\Delta$ is surjective.

As mentioned in Remark~\ref{rem:biext}, the section $\nutilde^\nilp$ of $J(\bK)$ over $\Delta$ corresponds to an admissible variation of MHS $\bL$ over $\Delta$. The restriction of $\bL_\Q$ to $C_p'$ is a unipotent variation of MHS with weight graded quotients
$$
\Q,\ \Lambda^3 H_1(C)(-1) \text{ and } \Lambda^2_0 H_1(C).
$$
Denote it by $\bL_C$. In the terminology of \cite{hain:biext}, it is a unipotent biextension. Since its monodromy representation has non-abelian image, the main result of \cite{hain:biext} implies that there is a dense open subset of $C$ over which the fiber of $\bL_C$ does not split as a real biextension. That is, no positive multiple of the restriction of $\nu^\nilp$ to $C$ is a section of $J(\bK_\R)$ and so no positive multiple of $\nu^\nilp$ is a section of $J(\bK_\R)$. Corollary~\ref{cor:splitting} implies that $\rank \nu^\nilp > \rank \nu_\Delta$.

The inductive hypothesis and Corollary~\ref{cor:boundary-rk} imply that
$$
\rank \nu_\Delta \ge \big(3(g-1) -3\big) + 2 = 3g-4.
$$
Since $\rank \nu \ge \rank \nu^\nilp$, we must have $\rank\nu = 3g-3$.

\appendix

\section{Normal functions over $\cM_{h,m+\vec{r}}$}
\label{app:normal}

Denote the category of admissible variations of MHS over a smooth variety $X$ by $\MHS(X)$.
When $\bV$ is a polarized variation of Hodge structure over $X$ of weight $-1$, the space of normal function sections of $J(\bV)$ is, by definition, $\Ext^1_{\MHS(X)}(\Z(0),\bV)$.

Suppose that $2h-2+m+r > 0$. Denote by $\cM_{h,m+\vec{r}}$ the moduli space (more accurately, stack) that parameterizes isomorphism classes of $(C;x_1,\dots,x_m,\vv_1,\dots,\vv_r)$ where $C$ is smooth, projective of genus $h$, each $\vv_j \in T_{y_j} C$ is non-zero, and where $x_1,\dots,x_m,y_1,\dots,y_r$ are distinct points of $C$.

The following is a special case of \cite[Thm.~A.1]{hain:normal} when $r=0$. (See also \cite[\S8]{hain:msri}.) The $r>0$ case follows from the same arguments combined with well-known facts about mapping class groups. Here $\bH$ denotes the local system over $\cM_{h,m+\vec{r}}$ whose fiber over $(C;x_1,\dots,x_m,\vv_1,\dots,\vv_r)$ is $H_1(C)$. For $0 < k \le h$, we set
$$
\Lambda_0^k \bH := \Lambda^k \bH/\big(\theta\cdot \Lambda^{k-2} \bH\big).
$$
It is irreducible and corresponds to the $k$th fundamental representation of $\Sp_h$.

\begin{theorem}
Suppose that $h\ge 3$ and that $\bV$ is a polarized variation of Hodge structure whose underlying local system corresponds to a non-trivial irreducible $\Sp_h$-module. If $\bV$ is not isomorphic to variations $\bH$ or $\Lambda^3_0 \bH(-1)$ of weight $-1$, then
$$
\Ext^1_{\MHS(\cM_{h,m+\vec{r}})}(\Q,\bV) = \Ext^1_{\MHS(\cM_{h,m+r})}(\Q,\bV) = 0.
$$
Otherwise,
$$
\Ext^1_{\MHS(\cM_{h,m+\vec{r})}}(\Q,\bH) = \Ext^1_{\MHS(\cM_{h,m+r})}(\Q,\bH) = \bigoplus_{j=1}^{m+r} \Q\kappa_j
$$
and
$$
\Ext^1_{\MHS(\cM_{h,m+\vec{r})}}(\Q,\Lambda^3_0 \bH(-1)) = \Q\nu.
$$
Here $\nu$ corresponds to the normal function of the Ceresa cycle and $\kappa_j$ corresponds to the section $(C;x_1,\dots,x_m,\vv_1,\dots,\vv_r) \mapsto (2h-2)x_j - K_C$ of the universal jacobian $J(\bH) \to \cM_{g,n+\vec{r}}$, where $x_j = y_{j-m}$ when $j > m$.
\end{theorem}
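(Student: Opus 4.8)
The plan is to reduce everything to the case $r=0$, which is \cite[Thm.~A.1]{hain:normal}: the moduli space $\cM_{h,m+r}$ of curves with $m+r$ ordinary marked points is itself an instance of that result (with ``$n=m+r$''), and it provides the generators $\kappa_1,\dots,\kappa_{m+r}$ of $\Ext^1_{\MHS(\cM_{h,m+r})}(\Q,\bH)$, the generator $\nu$ of $\Ext^1_{\MHS(\cM_{h,m+r})}(\Q,\Lambda^3_0\bH(-1))$, and the vanishing for every other nontrivial irreducible $\Sp_h$-variation. The reduction runs through the forgetful morphism $f:\cM_{h,m+\vec{r}}\to\cM_{h,m+r}$ that keeps the points $y_1,\dots,y_r$ but discards the framings $\vv_1,\dots,\vv_r$. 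This $f$ is the complement of the zero sections in the fibre product over $\cM_{h,m+r}$ of the relative tangent line bundles at $y_1,\dots,y_r$, so it is an iterated $\Gm$-bundle with fibres $(\C^\times)^r$; on fundamental groups it is the central extension $1\to\Z^r\to\pi_1(\cM_{h,m+\vec{r}})\to\pi_1(\cM_{h,m+r})\to 1$ in which $\Z^r$ is generated by the Dehn twists about the $r$ framed punctures --- this is the ``well-known fact about mapping class groups''. Since those twists act trivially on $H_1(C)$, the monodromy of $\bV$ factors through $\pi_1(\cM_{h,m+r})$, so (by rigidity of polarized variations of Hodge structure with Zariski-dense monodromy) $\bV$ is pulled back along $f$, as are the normal functions $\kappa_j$ and $\nu$, whose defining formulas do not involve the $\vv_j$.

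Granting this, it suffices to prove that $f^\ast:\Ext^1_{\MHS(\cM_{h,m+r})}(\Q,\bV)\to\Ext^1_{\MHS(\cM_{h,m+\vec{r}})}(\Q,\bV)$ is an isomorphism. Because $\bV$ corresponds to a nontrivial irreducible $\Sp_h$-module and the monodromy of each of these moduli spaces is Zariski-dense in $\Sp_h$, one has $H^0(X,\bV)=H^0(X,\bV(-1))=0$ for $X=\cM_{h,m+r}$ and for $X=\cM_{h,m+\vec{r}}$. Feeding $H^0(X,\bV)=0$ into the Leray-type spectral sequence for $\Ext$ in the category of admissible variations of MHS, as in \cite[\S A]{hain:normal}, yields a natural isomorphism $\Ext^1_{\MHS(X)}(\Q,\bV)\cong\Hom_{\MHS}(\Q,H^1(X,\bV))$; that is, normal functions over $X$ are classified by the Hodge classes in $H^1(X,\bV)$. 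It then remains to compare these $H^1$'s. Since the fibres of $f$ are $(\C^\times)^r$ and $\pi_1(\cM_{h,m+r})$ acts trivially on their cohomology, $R^qf_\ast\Q$ is the constant variation $\Q(-q)^{\binom{r}{q}}$, and the projection formula together with the vanishing $H^0(\cM_{h,m+r},\bV(-1))=0$ makes the Leray spectral sequence for $f$ with coefficients in $f^\ast\bV$ degenerate in total degree $1$, giving an isomorphism of mixed Hodge structures $f^\ast:H^1(\cM_{h,m+r},\bV)\overset{\simeq}{\longrightarrow}H^1(\cM_{h,m+\vec{r}},\bV)$. Applying $\Hom_{\MHS}(\Q,-)$ and combining with the two preceding isomorphisms shows $f^\ast$ is an isomorphism on $\Ext^1$, and the $r=0$ case then identifies both sides with the asserted groups. (Equivalently, one can run this last comparison via the five-term exact sequence of the central extension displayed above, using that $H^1(\Z^r,\bV)=\bV^{r}$ carries no monodromy invariants.)

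The main obstacle is the input from \cite{hain:normal}. The identification $\Ext^1_{\MHS(X)}(\Q,\bV)\cong\Hom_{\MHS}(\Q,H^1(X,\bV))$ encodes both that admissible normal functions over $\cM_{h,\bullet}$ with $h\ge 3$ are determined by their monodromy and that every Hodge class in $H^1$ is realized by such a normal function; proving the second requires control of the mixed Hodge structure on the relative completion of the mapping class group, exactly the machinery used elsewhere in this paper. This is what \cite[Thm.~A.1]{hain:normal} supplies in the case $r=0$. Everything genuinely new for $r>0$ is the elementary geometry of the $\Gm^r$-bundle $f$ and the centrality of the framing twists, so once the $r=0$ statement is granted the argument above is routine.
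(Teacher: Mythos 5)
Your proposal is correct and is essentially the paper's own (very terse) argument made explicit: the paper simply cites \cite[Thm.~A.1]{hain:normal} for the $r=0$ case and asserts that $r>0$ ``follows from the same arguments combined with well-known facts about mapping class groups,'' and those facts are exactly what you supply --- $\cM_{h,m+\vec{r}}\to\cM_{h,m+r}$ is a $\Gm^r$-bundle inducing a central extension of (orbifold) fundamental groups by the Dehn twists about the framed points, so $H^1(-,\bV)$ is unchanged (five-term/Leray, using $\bV^{\Sp_h(\Z)}=0$), and with $H^0(X,\bV)=0$ the class map identifies admissible normal functions with Hodge classes in $H^1$, reducing everything to the $r=0$ computation. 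Your acknowledgement that the identification of normal functions with Hodge classes rests on the machinery behind \cite[Thm.~A.1]{hain:normal} (not merely its statement) is exactly the dependence the paper intends.
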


\begin{remark}
Denote the genus $g$ Torelli group by $T_g$. The Johnson homomorphism \cite{johnson:homom} induces a homomorphism
$$
\tau_g : H_1(T_g;\Z) \to \Lambda^3_0 H_1(C;\Z)
$$
where $C$ is a curve that corresponds to the base point of Torelli space. Johnson \cite{johnson:h1} proved that it is an isomorphism mod 2-torsion. This implies that there are isomorphisms
$$
H^1(\cM_g,\Lambda^3_0\bH_\Q) \cong \Hom(H_1(T_g),\Q) \cong \Q\tau_g
$$
One can show that the generator of $H^1(\cM_g,\Lambda^3_0\bH_\Z)$ corresponds to $2\tau_g$ and that this is the class of the Ceresa cycle. Moreover, the class map
$$
\Ext^1_{\MHS(\cM_g)}(\Z,\Lambda^3_0 \bH_\Z(-1)) \to H^1(\cM_g,\Lambda^3_0\bH_\Z)
$$
is an isomorphism. This implies that when $g\ge 3$, the normal functions (over $\cM_g$) of the Gross--Schoen cycle and all variants of both the Ceresa cycle and the Gross--Schoen cycle are, mod torsion, integral multiples of the normal function of the Ceresa cycle. The class of the Gross Schoen cycle is easily seen to be
$$
(3^3 - 3 \cdot 2^3 + 3 \cdot 1^3)\tau_g = 6 \tau_g,
$$
which implies that the normal function of the Gross--Schoen cycle is 3 times that of the Ceresa cycle.
\end{remark}

\end{document}